\newcolumntype{L}[1]{>{\raggedright\let\newline\\\arraybackslash\hspace{0pt}}m{#1}}
\newcolumntype{C}[1]{>{\centering\let\newline\\\arraybackslash\hspace{0pt}}m{#1}}
\newcolumntype{R}[1]{>{\raggedleft\let\newline\\\arraybackslash\hspace{0pt}}m{#1}}
\newtheoremstyle{theoremstyle}
{10pt}      
{5pt}       
{\itshape}  
{}          
{\bfseries} 
{}         
{ }      
{}          
\newtheoremstyle{algorithmstyle}
{10pt}      
{5pt}       
{}  
{}          
{\bfseries} 
{}         
{ }      
{}          
\newtheoremstyle{examplestyle}
{10pt}      
{5pt}       
{}          
{}          
{\bfseries} 
{}         
{ }      
{}          
\newcommand{\subalign}[1]{%
  \vcenter{%
    \Let@ \restore@math@cr \default@tag
    \baselineskip\fontdimen10 \scriptfont\tw@
    \advance\baselineskip\fontdimen12 \scriptfont\tw@
    \lineskip\thr@@\fontdimen8 \scriptfont\thr@@
    \lineskiplimit\lineskip
    \ialign{\hfil$\m@th\scriptstyle##$&$\m@th\scriptstyle{}##$\hfil\crcr
      #1\crcr
    }%
  }%
}
\theoremstyle{theoremstyle}
\newtheorem{theorem}{Theorem}[section]
\newtheorem{lemma}[theorem]{Lemma}
\newtheorem{proposition}[theorem]{Proposition}
\newtheorem{corollary}[theorem]{Corollary}
\newtheorem*{theorem*}{Main theorem}
\theoremstyle{examplestyle}
\newtheorem{example}[theorem]{Example}
\newtheorem{definition}[theorem]{Definition}
\newtheorem{notation}[theorem]{Notation}
\theoremstyle{algorithmstyle}
\renewcommand{\aa}{\mathbbm{a}}
\newcommand{\bb}{\mathbbm{b}}
\newcommand{\cc}{\mathbbm{c}}
\newcommand{\EE}{\mathbb{E}}
\newcommand{\ee}{\mathbbm{e}}
\newcommand{\FF}{\mathbb{F}}
\newcommand{\GG}{\mathbb{G}}
\newcommand{\HH}{\mathbb{H}}
\newcommand{\RR}{\mathbb{R}}
\newcommand{\ZZ}{\mathbb{Z}}
\newcommand{\suchthat}{\;\ifnum\currentgrouptype=16 \middle\fi|\;}
\newcommand{\bigmid}{\left.\vphantom{\Big\{} \suchthat \vphantom{\Big\}}\right.}
\newcommand{\bbone}{\mathbbm{1}}
\newcommand*\circled[1]{\tikz[baseline=(char.base)]{
            \node[shape=circle,draw,inner sep=2pt] (char) {#1};}}
\DeclareMathOperator{\codim}{codim}
\DeclareMathOperator{\mult}{mult}
\DeclareMathOperator{\Relint}{Relint}
\DeclareMathOperator{\Span}{Span}
\DeclareMathOperator{\Trop}{Trop}
\DeclareMathOperator{\rank}{rank}
\DeclareMathOperator{\Star}{Star}
\DeclareMathAlphabet{\altmathbb}{U}{fplmbb}{m}{n} 
\title{The tropical galaxy of a Laman graph}
\author{Amelia Bielby}
\address{Amelia Bielby, Department of Mathematical Sciences, Durham University.}
\email{amelia.bielby@durham.ac.uk}
\author{Arushi Chauhan}
\address{Arushi Chauhan, Department of Mathematical Sciences, Durham University.}
\email{arushi.chauhan@durham.ac.uk}
\author{Cassia Pearce}
\address{Cassia Pearce, Department of Mathematical Sciences, Durham University.}
\email{cassia.pearce@durham.ac.uk}
\author{Yue Ren}
\address{Yue Ren, Department of Mathematical Sciences, Durham University.}
\email{yue.ren2@durham.ac.uk}
\urladdr{https://www.yueren.de/}
\date{\today}
\newcommand{\pointOrNoPoint}{.}
\newcommand{\forGlasgow}[1]{}
\newcommand{\forArXiv}[1]{#1}
\begin{document}

\begin{abstract}
A Laman graph $G$ is a minimally rigid graph in dimension two, and its realization number is its number of distinct embeddings with fixed generic edge lengths. While conjectured to grow exponentially in the number of vertices of $G$, the best proven lower bound is merely $2$. Motivated by the fact that the realization number can be expressed as a tropical intersection product involving $\Trop(G)$, the Bergman fan of the graphic matroid of $G$, and the fact that stars of $\Trop(G)$ naturally lead to lower bounds thereof, we introduce the tropical galaxy of $G$ together with a galactic pairing thereon. We study structural properties of this pairing, such as under which conditions it is non-trivially subadditive, and connect it being non-zero to arboreal pairs.  We also present a software package for working with tropical galaxies.
\end{abstract}

\maketitle



\section{Introduction}
Rigidity theory studies the (in)flexibility of graphs embedded in euclidean space.  Its origins date back to Maxwell, who studied bar-and-joint frameworks motivated by engineering \cite{Maxwell1870}.  Modern rigidity theory enjoys a variety of applications beyond structural engineering, such as robotics \cite{ZelazoFranchiAllgowerBulthoffGiordano2012}, material science \cite{RaderBrown2011}, and sensor networks \cite{SoYe2007}. Moreover, rigidity theory draws from a surprising number of mathematical areas such as algebraic geometry and combinatorics \cite{SitharamJohnSidman2018}.

Laman graphs $G$ play an important role in rigidity theory. They represent minimally rigid structures in dimension $2$.  A property of particular interest for this paper are their \emph{realization numbers} $c_2(G)$, that is the number of ways a $G$ with fixed generic edge lengths can be embedded into $\RR^2$ up to translation and rotation.  It is a property that connects combinatorics and geometry.  Most peculiarly, the realization number of a Laman graph with $n$ vertices has been conjectured to be $2^{n-3}$ by Jackson and Owen \cite{JacksonOwen2019}, while the current best lower bound is merely $2$. 

We will study the realization number using a new tropical approach that expresses it as a tropical intersection product $2\cdot c_2(G)=\Trop(G)\cdot (-\Trop(G))$, where $\Trop(G)$ is (a coarsening of) the Bergman fan of the graphic matroid of $G$ \cite{ClarkeDewarTrippMaxwellNixonRenSmith2025}.  This approach naturally leads to a method of obtaining lower bounds by replacing the intersects with their stars, see \cref{lem:intersectionProductAndStar}.  This work aims to start an avenue of research into lower bounding $c_2(G)$ by studying the stars of $\Trop(G)$.

In \cref{sec:excision}, we introduce so-called \emph{graph excisions}, which is the graphic analogue of taking tropical stars along rays.  These will be crucial building-blocks of the next sections.

In \cref{sec:galaxy}, we introduce the so-called \emph{tropical galaxy} $\Gamma_G$ of a Laman graph $G$, which is a directed acyclic graph whose vertices are the stars of $\Trop(G)$ and edges $(\Sigma_1,\Sigma_2)$ encode that $\Sigma_2$ is the star of $\Sigma_1$ around a ray.  Note that $\Gamma_G$ has a unique source, which is $G$ itself, and many sinks, which are stars around maximal cones. On $\Gamma_G$ we have a \emph{galactic pairing} given by $\langle \Sigma_1,\Sigma_2\rangle\coloneqq \Sigma_1\cdot(-\Sigma_2)$.  As $2\cdot c_2(G)=\langle \Trop(G),\Trop(G)\rangle$, the rest of the paper is dedicated to study its properties:

In \cref{sec:subadditivity}, we study conditions under which the galactic pairing is non-trivially subadditive.

In \cref{sec:arboreal}, we show that the galactic pairing is non-zero on the leafs of $\Gamma_G$ if and only if the pair of leaves is a so-called arboreal pair as in \cite{Ardila-MantillaEurPenaguiao2024}.

In \cref{sec:software}, we describe a software package that we have written in order to facilitate our experiments, which is publicly available under
\begin{center}
    \url{https://github.com/YueRen/TropicalGalaxy.jl}.
\end{center}


\subsection*{Acknowledgements}
We would like to thank Oliver Clarke (Durham) and Ben Smith (Lancaster) for helpful discussions.  Yue Ren is supported by the UKRI Future Leaders Fellowship ``Computational tropical geometry and its applications'' (MR/Y003888/1) as well as the EPSRC grant ``Mathematical Foundations of Intelligence: An "Erlangen Programme" for AI'' (EP/Y028872/1).

\section{Background}\label{sec:background}
In this section, we briefly recall some basic concepts that are of immediate interest to us with the main purpose of fixing our notation.

\begin{notation}
  \label{not:graphs}
  Throughout the paper, we will use $[m]$ to denote the set $\{1,\dots,m\}$.
  In this paper, we consider two types of undirected and loopless graph:

  \begin{enumerate}[leftmargin=*]
  \item Simple graphs $G$ with vertex set $V(G)$ and edge set $E(G)$.  We will often use $n$ and $m$ to denote the number of vertices and edges, respectively.  We will fix an ordering $V(G)=\{v_1, \dots, v_n\}$ and $E(G)=\{e_1, \dots, e_m\}$.  In most instances, we are only interested in the edges and write $E(G)=\{1, \dots, m\}$.

  \item Loopless multigraphs $\GG$ with vertex set $V(\GG)$ and edge set $E(\GG)$.  As above, we will often use $n$ and $m$ to denote the number of vertices and edges, respectively, and we will again fix an ordering $V(G)=\{v_1, \dots, v_n\}$ and $E(G)=\{e_1, \dots, e_m\}$.  In most instances, we are only interested in the edges and again write $E(G)=\{1, \dots, m\}$.
  Moreover, we use $\EE(\GG)\subseteq 2^{E(G)}$ to denote the set of multiedges, i.e., every $\ee\in \EE(\GG)$ is a subset $\ee\subseteq E(G)$ that is the edge set of an induced subgraph with two vertices and at least one edge.
  \end{enumerate}
\end{notation}

\subsection{Laman graphs}
Laman graphs are the starting point of our studies.  In this section, we briefly recall their definition and their construction.  More information about Laman graphs and their role in rigidity theory can be found in \cite{SitharamJohnSidman2018}.

\begin{definition}
  \label{def:LamanGraphs}
  A \emph{Laman graph} is a simple graph $G$ such that
  \begin{enumerate}
  \item $|E(G)|=2\cdot |V(G)|-3$,
  \item $|E(G')|\leq 2\cdot |V(G')|-3$ for every vertex-induced subgraph $G'\subseteq G$.
  \end{enumerate}
\end{definition}

\begin{example}
  \label{ex:LamanGraphs}
  \cref{fig:LamanGraphs} shows three examples of Laman graphs with 9 edges: the prism graph, the triangle wheel with four triangles, and the triangle chain with four triangles.
\end{example}

\begin{figure}[t]
  \centering
  \begin{tikzpicture}
    \node (prism)
    {
      \begin{tikzpicture}[x={(0.7,0)},y={(0,0.7)}]
        \coordinate (v1) at (0,0);
        \coordinate (v2) at (1,1);
        \coordinate (v3) at (0,2);
        \coordinate (v4) at (4,0);
        \coordinate (v5) at (3,1);
        \coordinate (v6) at (4,2);
        \draw
        (v1) -- (v2) -- (v3) -- (v1)
        (v4) -- (v5) -- (v6) -- (v4)
        (v1) -- (v4)
        (v2) -- (v5)
        (v3) -- (v6);
        \fill
        (v1) circle (2pt)
        (v2) circle (2pt)
        (v3) circle (2pt)
        (v4) circle (2pt)
        (v5) circle (2pt)
        (v6) circle (2pt);
      \end{tikzpicture}
    };
    \node[anchor=west,xshift=5mm] (wheel) at (prism.east)
    {
      \begin{tikzpicture}[x={(0.7,0)},y={(0,0.7)}]
        \coordinate (v1) at (0,0);
        \coordinate (v2) at (0,2);
        \coordinate (v3) at (2,0);
        \coordinate (v4) at (2,2);
        \coordinate (v5) at (4,0);
        \coordinate (v6) at (4,2);
        \draw
        (v1) -- (v3) -- (v5) -- (v6) -- (v4) -- (v2) -- (v1)
        (v2) -- (v3)
        (v4) -- (v3)
        (v6) -- (v3);
        \fill
        (v1) circle (2pt)
        (v2) circle (2pt)
        (v3) circle (2pt)
        (v4) circle (2pt)
        (v5) circle (2pt)
        (v6) circle (2pt);
      \end{tikzpicture}
    };
    \node[anchor=west,xshift=5mm] (chain) at (wheel.east)
    {
      \begin{tikzpicture}[x={(0.7,0)},y={(0,0.7)}]
        \coordinate (v1) at (0,0);
        \coordinate (v2) at (0,2);
        \coordinate (v3) at (2,0);
        \coordinate (v4) at (2,2);
        \coordinate (v5) at (4,0);
        \coordinate (v6) at (4,2);
        \draw
        (v1) -- (v2) -- (v3) -- (v1)
        (v2) -- (v4) -- (v3)
        (v3) -- (v5) -- (v4)
        (v4) -- (v6) -- (v5);
        \fill
        (v1) circle (2pt)
        (v2) circle (2pt)
        (v3) circle (2pt)
        (v4) circle (2pt)
        (v5) circle (2pt)
        (v6) circle (2pt);
      \end{tikzpicture}
    };
  \end{tikzpicture}
  \caption{Three Laman graphs with $9$ edges\pointOrNoPoint}
  \label{fig:LamanGraphs}
\end{figure}
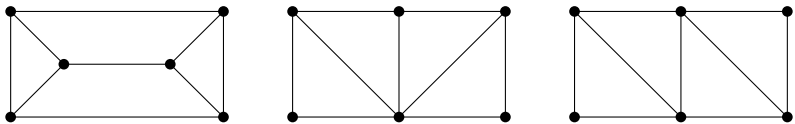

\begin{definition}
  \label{def:hennebergMoves}
  Let $G$ be a graph with vertex set $V(G)=\{v_1,\dots,v_n\}$ and edge set $E(G)\subseteq\binom{V(G)}{2}$.

\begin{description}
    \item[$(H_0)$] A \emph{Henneberg-0 move} adds a new vertex $v_{n+1}$ and two new edges $\{v_{i},v_{n+1}\}$, $\{v_{j},v_{n+1}\}$ for $v_i,v_j\in V(G)$, $i\neq j$.
    \item[$(H_1)$] A \emph{Henneberg-1 move} removes an edge $\{v_i,v_j\}\in E(G)$, adds a new vertex $v_{n+1}$ and three new edges: $\{v_{i},v_{n+1}\}$, $\{v_{j},v_{n+1}\}$, and $\{v_{k},v_{n+1}\}$ for some $i\neq k\neq j$.
\end{description}
\end{definition}

\begin{theorem}[{\cite{Henneberg1911}}]
  \label{thm:henneberg}
  All Laman graphs arise from a single edge by performing successive Henneberg moves.
\end{theorem}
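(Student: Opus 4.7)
The plan is to prove the statement by induction on $n=|V(G)|$, showing that every Laman graph on at least three vertices arises from a smaller Laman graph by a single Henneberg move. The base case $n=2$ is a single edge, which is Laman since $1=2\cdot 2-3$, and the induction step amounts to showing that we can always \emph{reverse} either an $H_0$ or $H_1$ move.

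First I would establish two degree estimates. From Definition~\ref{def:LamanGraphs}(1) we have $\sum_v \deg(v) = 2|E(G)| = 4n-6$, so the average degree is strictly less than $4$ and thus some vertex $v$ satisfies $\deg(v)\in\{2,3\}$; moreover, applying Definition~\ref{def:LamanGraphs}(2) to $G' = G \setminus \{v\}$ rules out $\deg(v)\leq 1$, so in fact $\deg(v)\in\{2,3\}$. If there is a vertex $v$ of degree $2$, I would simply delete it: the resulting graph $G'$ has $2(n-1)-3$ edges and still satisfies (2) because every vertex-induced subgraph of $G'$ is one of $G$. Hence $G'$ is Laman and $G$ is obtained from $G'$ by an $H_0$ move.

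The harder case is when the minimum degree is exactly $3$. Let $v$ be a degree-$3$ vertex with neighbours $v_i,v_j,v_k$. I would reverse an $H_1$ move by deleting $v$ and adding one of the three edges $\{v_i,v_j\},\{v_j,v_k\},\{v_i,v_k\}$ (those not already present in $G$). The key claim is that at least one such choice yields a Laman graph. Suppose for contradiction that every admissible choice violates Definition~\ref{def:LamanGraphs}(2). Then for each such pair, there exists a vertex-induced subgraph $H_{ab}\subseteq G\setminus\{v\}$ containing both endpoints and saturating the bound $|E(H_{ab})|=2|V(H_{ab})|-3$ (a \emph{tight} subgraph). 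I would then invoke the standard submodularity property of the function $f(W)=2|W|-3$ on vertex sets, which implies that the union of two tight subgraphs sharing at least two vertices is again tight. Iterating over the tight subgraphs $H_{ij},H_{jk},H_{ik}$, each pair of which shares at least two of $\{v_i,v_j,v_k\}$, produces a tight subgraph $H\subseteq G\setminus\{v\}$ containing all three neighbours. Reintroducing $v$ together with its three edges then gives a vertex-induced subgraph $H\cup\{v\}$ of $G$ with $2|V(H)|-3+3 = 2(|V(H)|+1)-2$ edges, contradicting Definition~\ref{def:LamanGraphs}(2) applied to $G$.

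The main obstacle, as is standard in this proof, is the $H_1$ step: one has to rule out the possibility that all three edge-insertions create a tight obstruction. The submodularity lemma for the count function $f(W)=2|W|-3$ is the technical workhorse, and once it is in place the rest of the induction step is essentially bookkeeping. I would present the submodularity lemma as a small auxiliary statement so that the induction step for $H_1$ becomes a short contradiction argument; the $H_0$ step and the base case are then immediate, completing the proof.
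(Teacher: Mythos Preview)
The paper does not prove this theorem; it is stated and attributed to \cite{Henneberg1911} with no argument given, so there is nothing in the paper to compare against. What you have sketched is the standard inductive proof, and the overall strategy (find a vertex of degree $2$ or $3$, reverse the corresponding Henneberg move) is correct.

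That said, your $H_1$ step contains a real gap. You assert that the tight subgraphs $H_{ij},H_{jk},H_{ik}$ are such that ``each pair of which shares at least two of $\{v_i,v_j,v_k\}$''. This is false: $H_{ij}$ contains $v_i,v_j$ and (as one checks immediately, else $H_{ij}\cup\{v\}$ already violates sparsity) \emph{not} $v_k$, and symmetrically for the others; hence $H_{ij}\cap H_{jk}$ contains only $v_j$ among the three neighbours and may have size exactly $1$. Your submodularity lemma requires $|A\cap B|\ge 2$, so it does not apply as stated. The usual repair is a brief case split: if some two of the $H$'s meet in at least two vertices, their union is tight and already contains $v_i,v_j,v_k$; if all three pairwise intersections are singletons, a three-set inclusion--exclusion count on $H_{ij}\cup H_{jk}\cup H_{ik}$ shows that this union is tight, again yielding the contradiction. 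You should also fold in the sub-case where one or two of the candidate edges are already present in $G$ (your parenthetical ``those not already present'' silently drops them): an existing edge $\{v_a,v_b\}$ is itself a tight two-vertex set and can serve as $H_{ab}$ in the same argument.
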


\begin{example}
  \label{ex:HennebergMoves}
  \cref{fig:HennebergMoves} shows how the prism graph from \cref{ex:LamanGraphs} can be constructed from an edge via sequence of three Henneberg-$0$ moves and one Henneberg-$1$ move.
\end{example}

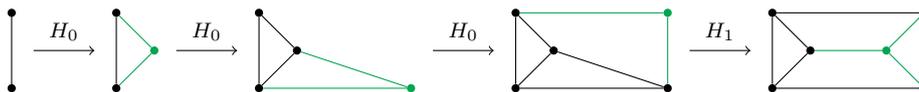
\begin{figure}[t]
  \centering
  \begin{tikzpicture}
    \node (g1)
    {
      \begin{tikzpicture}[x={(0.5,0)},y={(0,0.5)}]
        \node [circle, scale = 0.3, fill](v1) at (0,0){};
        \node [circle, scale = 0.3, fill](v2) at (0,2){};

        \draw(0,0) -- (0,2);
      \end{tikzpicture}
    };
    \node[anchor=west,xshift=10mm] (g2) at (g1.east)
    {
      \begin{tikzpicture}[x={(0.5,0)},y={(0,0.5)}]
        \node [circle, scale = 0.3, fill](v1) at (0,0){};
        \node [circle, scale = 0.3, fill](v2) at (0,2){};
        \node [circle, scale = 0.3, fill](v3) at (1,1){} [Green];

        \draw (v1) -- (v2);
        \draw (v1) -- (v3) -- (v2) [Green];
      \end{tikzpicture}
    };
    \node[anchor=west,xshift=10mm] (g3) at (g2.east)
    {
      \begin{tikzpicture}[x={(0.5,0)},y={(0,0.5)}]
        \node [circle, scale = 0.3, fill](v1) at (0,0){};
        \node [circle, scale = 0.3, fill](v2) at (0,2){};
        \node [circle, scale = 0.3, fill](v3) at (1,1){};
        \node [circle, scale = 0.3, fill](v4) at (4,0){}[Green];

        \draw (v1) -- (v2);
        \draw (v1) -- (v3) -- (v2);
        \draw (v1) -- (v4) -- (v3) [Green];
      \end{tikzpicture}
    };
    \node[anchor=west,xshift=10mm] (g4) at (g3.east)
    {
      \begin{tikzpicture}[x={(0.5,0)},y={(0,0.5)}]
        \node [circle, scale = 0.3, fill](v1) at (0,0){};
        \node [circle, scale = 0.3, fill](v2) at (0,2){};
        \node [circle, scale = 0.3, fill](v3) at (1,1){};
        \node [circle, scale = 0.3, fill](v4) at (4,0){};
        \node [circle, scale = 0.3, fill](v5) at (4,2){} [Green];

        \draw (v1) -- (v2);
        \draw (v1) -- (v3) -- (v2);
        \draw (v1) -- (v4) -- (v3);
        \draw (v4) -- (v5) -- (v2) [Green];
      \end{tikzpicture}
    };
    \node[anchor=west,xshift=10mm] (g5) at (g4.east)
    {
      \begin{tikzpicture}[x={(0.5,0)},y={(0,0.5)}]
        \node [circle, scale = 0.3, fill](v1) at (0,0){};
        \node [circle, scale = 0.3, fill](v2) at (1,1){};
        \node [circle, scale = 0.3, fill](v3) at (0,2){};
        \node [circle, scale = 0.3, fill](v4) at (4,0){};
        \node [circle, scale = 0.3, fill](v5) at (3,1){}[Green];
        \node [circle, scale = 0.3, fill](v6) at (4,2){};

        \draw (v1) -- (v2) -- (v3) -- (v1);
        \draw (v4) -- (v5) -- (v6)[Green];
        \draw (v4)--(v6);
        \draw (v1) -- (v4);
        \draw (v2) -- (v5)[Green];
        \draw (v3) -- (v6);
      \end{tikzpicture}
    };
    \draw[->,shorten <=1mm,shorten >=1mm] (g1) -- (g2) node[midway,above,font=\scriptsize] {$H_0$};
    \draw[->,shorten <=1mm,shorten >=1mm] (g2) -- (g3) node[midway,above,font=\scriptsize] {$H_0$};
    \draw[->,shorten <=1mm,shorten >=1mm] (g3) -- (g4) node[midway,above,font=\scriptsize] {$H_0$};
    \draw[->,shorten <=1mm,shorten >=1mm] (g4) -- (g5) node[midway,above,font=\scriptsize] {$H_1$};
  \end{tikzpicture}
  \caption{Henneberg moves to construct the prism graph\pointOrNoPoint}
  \label{fig:HennebergMoves}
\end{figure}

\subsection{Tropicalizations of multigraphs}
In this section, we recall the tropicalizations of graphic matroids arising from multigraphs, or tropicalizations of multigraphs in short.  For the sake of efficiency of our implementation, we will consider a coarsening of the usual Bergman fan structure via lattice of flats \cite{ArdilaKlivans2006}.  More information on Bergman fans can be found in \cite[Section 4.2]{MaclaganSturmfels2015}.

\begin{definition}
  \label{def:flatsAndChainsOfFlats}
  Let $\GG$ be a multigraph.  A \emph{flat} of $\GG$ is a subset $F\subseteq E(\GG)$ that is the edge set of a subgraph whose connected components are vertex-induced subgraphs.  A flat $F\subseteq E(\GG)$ is \emph{proper}, if $F\neq E(\GG)$.  From hereon, all our flats will be proper.  
  The \emph{rank} of a flat $F$ is the rank of its signed vertex-edge matrix:
  \begin{equation*}
      \rank(F)\coloneqq 
      \rank\Big( (m_{v_j,e_i})_{v_j\in V(G), e_i\in F} \Big), \text{ where } m_{v_j,e_i} =
      \begin{cases}
          -1 &\text{if } e_i=\{ v_j,v_{j'}\} \text{ for } j<j',\\
          1 &\text{if } e_i=\{v_{j'},v_j\} \text{ for } j'<j,\\
          0 &\text{otherwise.}
      \end{cases}
  \end{equation*}  
  A \emph{chain} of proper flats is a nested sequence of proper flats
  \begin{equation*}
    F_\bullet\colon\quad \emptyset=F_0\subsetneq F_1\subsetneq \dots\subsetneq F_r \quad \big(\!\subsetneq E(\GG)\big),
  \end{equation*}
  and we define its \emph{length} to be $\mathrm{length(F_\bullet)}\coloneqq r$.
\end{definition}

\begin{definition}
  \label{def:tropicalization}
  Let $\GG$ be a multigraph.  Any chain of proper flats $F_\bullet$ on $\GG$ gives rise to a \emph{Bergman cone}
  \begin{equation*}
    \sigma(F_\bullet)\coloneqq \RR_{\geq 0}\cdot \bbone_{F_0}+\RR_{\geq 0}\cdot \bbone_{F_1}+\dots+\RR_{\geq 0}\cdot \bbone_{F_r}+\RR\cdot \bbone_{[m]}\subseteq\RR^m
  \end{equation*}
  where $\bbone_{F_j}\in \{0,1\}^m$ denotes the indicator vector of the flat $F_j$ and $\bbone_{[m]}\coloneqq(1,\dots,1)$ denotes the all-ones vector.
  The \emph{Bergman fan} of the graphic matroid $M_\GG$ is the balanced polyhedral complex in $\RR^m$ defined by
  \begin{align*}
    &\Trop(M_\GG) \coloneqq \{\sigma(F_\bullet)\mid F_\bullet \text{ chain of flats of }\GG\} \quad\text{and}\\[1mm]
    &\mult_{\Trop(M_\GG)}(\sigma(F_\bullet))\coloneqq 1 \text{ for } F_\bullet \text{ maximal}.
  \end{align*}  
  We further define $\Trop(\GG)$ as the following cartesian product over all connected components $\GG'\subseteq\GG$:
  \begin{equation*}
    \Trop(\GG) \coloneqq \prod_{\substack{\GG'\subseteq \GG\\ \text{c.c.}}} \Trop(M_{\GG'})=\Big\{\prod_{\substack{\GG'\subseteq \GG\\ \text{c.c.}}} \sigma_{\GG'} \bigmid \sigma_{\GG'}\in \Trop(M_{\GG'})\Big\},
  \end{equation*}
  where we consider the $\Trop(M_{\GG'})$ and the $\sigma_{\GG'}$ as fans and cones in $\RR^{E(\GG')}$, respectively.  We refer to $\Trop(\GG)$ as the \emph{tropicalization} of $\GG$.
\end{definition}

\begin{figure}[t]
  \centering
  \begin{tikzpicture}
    \node (first)
    {
      \begin{tikzpicture}[x={(0.7,0)},y={(0,0.7)}]
        \node[circle, scale= 0.4, fill](a1) at (-0.6,0){};
        \node[circle, scale= 0.4, fill](a2) at (4.6,0){};
        \node[circle, scale= 0.4, fill](a3) at (2,4){};
        \node(A3) at (2,4) [above]{G};
        
        \draw (a1)--(a2) node[pos = 0.8, below] {$3$};
        \draw (a2)--(a3) node [pos = 0.8, above right, xshift = -3pt] {$2$};
        \draw (a3)--(a1) node[pos = 0.8, above left, xshift = 2pt] {$1$};

        \node[circle, scale = 0.4, fill, color = blue](b1) at (2,1.5){};
        \node(B1) at (2,1.5) [ below right,yshift = 5pt,xshift = -2pt] [blue]
        {\tiny
          $\RR \left(
            \begin{smallmatrix}
              1 \\
              1 \\
              1
            \end{smallmatrix}\right)$
        };
        \node(A1) at (0,0) [above right, xshift = 0pt,yshift = 6pt][blue]{\footnotesize $\Trop(G)$};

        \draw [-stealth] (b1) to node[pos=0.9, left,  color=blue]{$\sigma_{3}$} (2,-2) [blue];
        \draw [-stealth] (b1) to node[pos=0.9, below right ,  color=blue]{$\sigma_{2}$} (5,3.5) [blue];
        \draw [-stealth] (b1) to node[pos=0.9, above right,  color=blue]{$\sigma_{1}$} (-1,3.5) [blue];

        \draw[dashed] (5.25,3.67) -- (9.5,6.5)[blue] node[pos=0.5, below , right, xshift = -6mm, yshift = -10mm](TropFF2){$\Trop(\FF_2)$};
        \node[anchor=north,yshift=-5mm, blue] (SpanFF2) at (TropFF2.south) {$\Span (\bbone_{\{2\}}, \bbone_{\{13\}})$};
        \draw[draw opacity=0] (SpanFF2) -- node[sloped,blue] {$=$} (TropFF2);
        \draw [shorten <=-4.5mm, shorten >=7mm](6.5,4.5)--(9.5,6.5)[blue];
        \draw[dashed] (-1.25,3.67) -- (-5.5,6.5)[blue];
        \draw [shorten <=-4.5mm, shorten >=7mm] (-2.5,4.5)--(-5.5,6.5)[blue] node[pos=0.5, right, xshift = 10pt, yshift = 0pt](TropFF1) {$\Span (\bbone_{\{1\}}, \bbone_{\{23\}})$};
        \node[anchor=south,yshift=5mm,blue] (SpanFF1) at (TropFF1.north) {$\Trop(\FF_1)$};
        \draw[draw opacity=0] (SpanFF1) -- node[sloped,blue] {$=$} (TropFF1);
        \draw[dashed] (2,-2.25) -- (2,-7.5)[blue];
        \draw [shorten <=-4.5mm, shorten >=3.5mm] (2,-4)--(2,-7)[blue] node[pos=0.5, left, xshift = -3mm,yshift = 2mm] (TropFF3) {$\Span (\bbone_{\{3\}}, \bbone_{\{12\}})$};
        \node[anchor=south,yshift=5mm,blue] (SpanFF3) at (TropFF3.north) {$\Trop(\FF_3)$};
        \draw[draw opacity=0] (SpanFF3) -- node[sloped,blue] {$=$} (TropFF3);

        \draw [-stealth] (-0.8,3) to [bend left=40] node[pos=0.5, below,red]{$1$}  (-2,3.8)[red];
        \draw [-stealth] (4.5,3.6) to [bend left=40] node[pos=0.5, above,red]{$2$}  (5.7,4.4)[red];
        \draw [-stealth] (2.25,-1.8) to [bend left=40] node[pos=0.5, right,red]{$3$}  (2.25,-3.2)[red];

        \node[circle, scale= 0.4, fill](c1) at (1,-6.5){};
        \node[circle, scale= 0.4, fill](c2) at (3,-6.5){};
        \node[circle, scale= 0.4, fill](c3) at (3,-6){};
        \node[circle, scale= 0.4, fill](c4) at (3,-4){};
        \node(C1) at (1,-6.5) [left]{$\FF_3$};
        \node[circle, scale= 0.4, fill](c5) at (7.95,6.66){};
        \node[circle, scale= 0.4, fill](c6) at (9.05,5){};
        \node[circle, scale= 0.4, fill](c7) at (7.55,6.39){};
        \node[circle, scale= 0.4, fill](c8) at (6,5.36){};
        \node(C6) at (9.05,5) [below right]{$\FF_2$};
        \node[circle, scale= 0.4, fill](c9) at (-3.94,6.66){};
        \node[circle, scale= 0.4, fill](c10) at (-5.05,5){};
        \node[circle, scale= 0.4, fill](c11) at (-4.65,4.73){};
        \node[circle, scale= 0.4, fill](c12) at (-3.1,3.7){};
        \node(C9) at (-3.94,6.66) [above left]{$\FF_1$};

        \draw (c1)--(c2) node[pos=0.8, below]{$3$};
        \draw (c3)--(c4) node[pos=0.5, right]{$12$};
        \draw (c5)--(c6) node[pos=0.3,above right,xshift = -5pt]{$2$};
        \draw (c7)--(c8) node[pos=0.6,above, yshift = 3pt]{$13$};
        \draw (c9)--(c10) node[pos =1,above, xshift = -2pt, yshift = 2pt] {$1$};
        \draw (c11)--(c12) node[pos = 0.3, below, yshift = -2pt] {$23$};
      \end{tikzpicture}
    };
  \end{tikzpicture}
  \caption{The Laman graph $G$, its full excisions $\FF_1, \FF_2, \FF_3$, and their tropicalizations\pointOrNoPoint}
  \label{fig:running}
\end{figure}
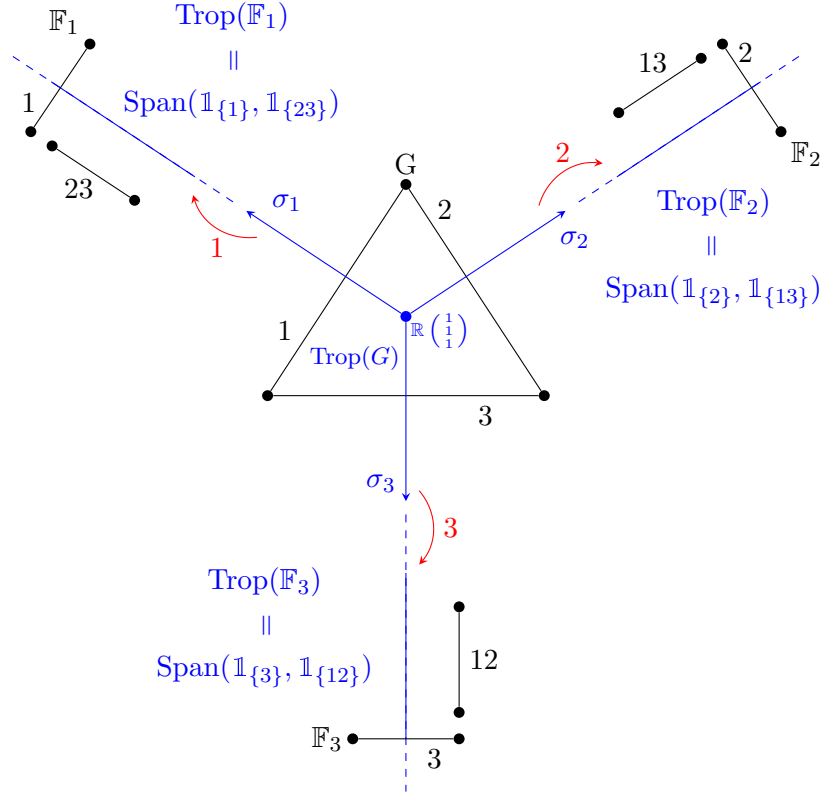

\begin{example}
  \label{ex:tropicalization}
  Let $G$ be the complete graph on $3$ vertices, and for $i=1,2,3$ let $\FF_i$ be the multigraph on $4$ vertices with an isolated edge $i$ and an isolated multiedge $\{1,2,3\}\setminus \{i\}$ as illustrated in \cref{fig:running}.
  \begin{enumerate}
  \item Note that $G$ is connected and has $5$ flats, $\emptyset, \{1\}, \{2\}, \{3\}, \{1,2,3\}=E(G)$, which in turn gives rise to $3$ maximal chains $F_{1,\bullet},F_{2,\bullet},F_{3,\bullet}$ of length $1$:
    \begin{equation*}
      F_i\colon \qquad \emptyset\subsetneq \{i\} \qquad\text{for }i=1,2,3.
    \end{equation*}
    Consequently, $\Trop(G)$ consists of $3$ maximal cones
    \begin{equation*}
      \sigma_i=\RR_{\geq 0}\cdot \bbone_{\{i\}} + \RR\cdot \bbone_{\{1,2,3\}} \qquad\text{for }i=1,2,3.
    \end{equation*}
  \item The $\FF_i$ consists of two connected components, $\FF_i'$ with a single edge $i$ and $\FF_i''$ with a single multiedge $\{1,2,3\}\setminus\{i\}$.  Each connected component only has a single proper flat, which is the empty set. Hence, each connected component only has a single maximal chain of length $0$ consisting only of the empty set.
    Consequently, we have
    \begin{align*}
      \Trop(\FF_i) &= \Trop(M_{\FF_i'})\times\Trop(M_{\FF_i''}) = \Big\{\Span(\underbrace{\bbone_{\{i\}}}_{\in\RR^{\{1\}}})\times\Span(\underbrace{\bbone_{\{1,2,3\}\setminus\{i\}}}_{\in\RR^{\{1,2,3\}\setminus\{i\}}})\Big\}\\[-3mm]
                   &=\Big\{\Span(\underbrace{\bbone_{\{i\}},\bbone_{\{1,2,3\}\setminus\{i\}}}_{\in\RR^3=\RR^{\{1,2,3\}}})\Big\}
    \end{align*}
  \end{enumerate}
\end{example}

In \cref{def:tropicalization}, we defined cones using chains of flats.  This yields the following characterization of their points:

\begin{lemma}
  \label{lem:bergmanConeToChainOfFlats}
  Let $\GG$ be a multigraph and for every connected component $\GG'\subseteq\GG$ let $F_{\GG',\bullet}$ be a proper chain of flats on $\GG'$.
  For $w=(w_1,\dots,w_m)\in\RR^m$, say $\{w_1,\dots,w_m\}=\{\lambda_1,\dots,\lambda_r\}$ for $\lambda_1>\dots>\lambda_r$, we define $F_{\GG',w,i}\coloneqq \{i\in E(\GG')\mid w_i\geq \lambda_j\}$, resulting in the following chain of subsets on $E(\GG')$:
  \begin{equation*}
    F_{\GG',w,\bullet}\colon\quad F_{\GG',w,1}\subseteq F_{\GG',w,2}\subseteq\dots\subseteq F_{\GG',w,r}.
  \end{equation*}
  We then have
  \begin{equation*}
    \prod_{\substack{\GG'\subseteq\GG\\\text{c.c.}}} \sigma(F_{\GG',\bullet}) = \mathrm{cl}\Big(\Big\{w\in\RR^m\bigmid F_{\GG',w,\bullet}=F_{\GG',\bullet} \text{ for all c. c. } \GG'\subseteq\GG\Big\}\Big),
  \end{equation*}
  where $\mathrm{cl}(\cdot)$ denotes euclidean closure.
\end{lemma}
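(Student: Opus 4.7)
The plan is to identify the right hand side with the relative interior of $\sigma(F_\bullet)$ and then take its closure.  Concretely, I would first reduce to the connected case, and then, for a single Bergman cone, exhibit $\{w\mid F^w_\bullet=F_\bullet\}$ as $\Relint\sigma(F_\bullet)$ via two explicit inclusions.

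For the reduction, both sides factor compatibly over the connected components of $\GG$: the left hand side by the very definition of $\Trop(\GG)$, and the right hand side because the condition $F_{\GG',w,\bullet}=F_{\GG',\bullet}$ depends only on $w|_{E(\GG')}$ and is imposed independently for each connected component $\GG'\subseteq\GG$.  Since closure commutes with finite cartesian products, it suffices to prove
\[
\sigma(F_\bullet) \;=\; \mathrm{cl}\bigl(\{w\in\RR^{m}\mid F^w_\bullet=F_\bullet\}\bigr)
\]
for a connected $\GG$ and a fixed chain of proper flats $F_\bullet\colon \emptyset=F_0\subsetneq F_1\subsetneq\cdots\subsetneq F_r\subsetneq E(\GG)$.

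For the two inclusions, I would argue as follows.  If $w$ satisfies $F^w_\bullet=F_\bullet$, then $w$ is constant on each stratum $F_j\setminus F_{j-1}$ and on $E(\GG)\setminus F_r$, taking there strictly decreasing values $\mu_1>\mu_2>\cdots>\mu_{r+1}$.  A telescoping rearrangement yields
\[
w \;=\; (\mu_1-\mu_2)\bbone_{F_1} + (\mu_2-\mu_3)\bbone_{F_2} + \cdots + (\mu_r-\mu_{r+1})\bbone_{F_r} + \mu_{r+1}\bbone_{[m]},
\]
with every coefficient of $\bbone_{F_j}$ strictly positive, placing $w$ in $\Relint\sigma(F_\bullet)$.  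Conversely, since $\bbone_{F_1},\dots,\bbone_{F_r},\bbone_{[m]}$ are linearly independent (the $F_j$ are strictly nested and $F_r\subsetneq[m]$), every element of $\Relint\sigma(F_\bullet)$ is uniquely of this telescoped form with all differences strictly positive, and reading off its values on the strata shows that its induced chain is exactly $F_\bullet$.  Taking closure and invoking $\mathrm{cl}(\Relint\sigma)=\sigma$ for polyhedral cones finishes the connected case.

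The argument is straightforward convex geometry and I do not foresee a serious obstacle; the only care required is with the indexing—matching a chain of $r$ proper flats $F_0\subsetneq \cdots\subsetneq F_r$ against the chain $F^w_\bullet$ built from the $r+1$ distinct coordinate values of a relative-interior point—and with noting that passing to the closure on the right hand side is precisely what absorbs the boundary points of $\sigma(F_\bullet)$, at which some of the differences $\mu_j-\mu_{j+1}$ degenerate to zero and the induced chain becomes a proper subchain of $F_\bullet$.
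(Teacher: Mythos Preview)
Your proposal is correct and in fact expands substantially on what the paper does: the paper's entire proof is the single sentence ``Follows straightforwardly from the definition.'' Your argument---reducing to the connected case, identifying the locus $\{w\mid F^w_\bullet=F_\bullet\}$ with $\Relint\sigma(F_\bullet)$ via the telescoping expression, and then closing up---is exactly the unpacking the paper leaves to the reader, so the approaches coincide.
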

\begin{proof}
  Follows straightforwardly from the definition.
\end{proof}

The correspondence between cones in $\Trop(\GG)$ and chains of flats on the connected components of $\GG$ observed in \cref{ex:tropicalization} can be formalized as follows:

\begin{lemma}
  \label{lem:tropicalization}
  Let $\GG$ be a multigraph.  Then we have:
  \begin{enumerate}
  \item The lineality space of $\Trop(\GG)$ equals
    \begin{equation*}
      L\coloneqq \Span\Big(\bbone_{\ee}\bigmid \ee\in\EE(\GG) \text{ isolated}\Big).
    \end{equation*}
  \item Suppose $\ell\coloneqq \dim(L)$ and $r\coloneqq \dim \Trop(\GG)-\ell$.  For any $k=0,\dots,r$, we define
    \begin{align*}
      \Trop(\GG)(k) & \coloneqq \Big\{\sigma\in\Trop(\GG)\bigmid \dim(\sigma) = \ell+k \Big\} \\[2mm]
      \mathcal C(\GG)(k) & \coloneqq \left\{ (F_{\GG',\bullet})_{\subalign{&\GG'\subseteq \GG\\&\text{c.c.}}} \bigmid
      \begin{array}{c}
        F_{\GG',\bullet} \text{ chain of proper flats on } \EE(\GG') \text{ and} \\
        \sum_{\GG'\subseteq\GG\, c. c.} \mathrm{length}(F_{\GG',\bullet}) = k
      \end{array}
      \right\}.
    \end{align*}
    Then there is a one-to-one correspondence
    \begin{equation*}
      \begin{array}{ccc}
        \Trop(\GG)(k) & \hspace{5mm}\longleftrightarrow & \mathcal C(\GG)(k)\\[2mm]
        \prod_{\subalign{&\GG'\subseteq \GG\\&\text{c.c.}}} \sigma(F_{\GG',\bullet}) & \hspace{5mm}\longleftrightarrow & \phantom{{}_{\subalign{&\GG'\subseteq \GG\\&\text{conn.}\\&\text{c.}}}}\Big(F_{\GG',\bullet}\Big)_{\subalign{&\GG'\subseteq \GG\\&\text{c.c.}}}
      \end{array}
    \end{equation*}
  \end{enumerate}
\end{lemma}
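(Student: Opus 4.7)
The plan is to use the product decomposition $\Trop(\GG)=\prod_{\GG'\text{ c.c.}}\Trop(M_{\GG'})$ from \cref{def:tropicalization} and reduce both claims to statements about a single factor.  Recall that the lineality space of a product fan is the direct sum of factor lineality spaces, and cones of a product fan are products of cones; so (1) reduces to identifying the lineality of $\Trop(M_{\GG'})$ on each connected component and (2) reduces to a cone-by-cone dimension count.

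For part (1), I would invoke the standard fact that the Bergman fan of a matroid has lineality spanned by the indicator vectors of its direct-sum components.  For a graphic matroid $M_{\GG'}$ these summands correspond to the blocks of $\GG'$, each of which in our setting is an isolated multiedge $\ee\in\EE(\GG)$ with $\bbone_{E(\GG')}=\bbone_\ee$.  Summing over all connected components of $\GG$ identifies the lineality with $L=\Span(\bbone_\ee \mid \ee\in\EE(\GG)\text{ isolated})$, as claimed.

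For part (2), I would compute dimensions directly.  The cone $\sigma(F_{\GG',\bullet})$ from \cref{def:tropicalization} is generated by the lineality direction $\bbone_{E(\GG')}$ together with the rays $\bbone_{F_{\GG',j}}$ for $j=1,\dots,\mathrm{length}(F_{\GG',\bullet})$ (the generator for $j=0$ being $\bbone_{\emptyset}=0$).  Strict nesting of the chain forces these generators to be linearly independent modulo lineality, so $\dim\sigma(F_{\GG',\bullet})=\mathrm{length}(F_{\GG',\bullet})+1$.  Taking products across connected components, a tuple with total length $k$ thus yields a cone of dimension $\ell+k$, which lands exactly in $\Trop(\GG)(k)$.

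The main obstacle is injectivity of the assignment $(F_{\GG',\bullet})\mapsto \prod_{\GG'}\sigma(F_{\GG',\bullet})$, for which I would appeal to \cref{lem:bergmanConeToChainOfFlats}.  That lemma shows the relative interior of each product cone is precisely the locus of $w\in\RR^m$ whose component-wise level-set chains recover the given chains $F_{\GG',\bullet}$; in particular distinct tuples give rise to distinct cones.  Surjectivity is immediate from the construction in \cref{def:tropicalization}, so together these assemble into the claimed bijection.
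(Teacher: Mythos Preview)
The paper itself offers nothing beyond ``Follows straightforwardly from the definition,'' so your proposal is inevitably more detailed.  Your treatment of part~(2) --- the dimension count on each factor $\sigma(F_{\GG',\bullet})$, surjectivity by construction, and injectivity via \cref{lem:bergmanConeToChainOfFlats} --- is exactly how one would unpack that phrase, and it is correct provided $\ell$ equals the number of connected components of $\GG$.

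Part~(1), however, has a genuine gap.  You assert that the direct-sum components (blocks) of each $M_{\GG'}$ are ``in our setting'' isolated multiedges with $\bbone_{E(\GG')}=\bbone_\ee$.  That is false in general: if $\GG'$ is the triangle of \cref{ex:tropicalization}, its only block is the triangle itself, and $\bbone_{E(\GG')}$ is not the indicator of any multiedge.  More to the point, with the fan structure of \cref{def:tropicalization} the minimal cone of $\Trop(M_{\GG'})$ is $\RR\cdot\bbone_{E(\GG')}$ (coming from the empty chain), not the span of block indicators, so the lineality of the product is $\Span(\bbone_{E(\GG')}\mid \GG'\text{ c.c.})$.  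This strictly contains the $L$ in the statement whenever some component of $\GG$ is not a lone multiedge --- compare \cref{cor:tropicalization}~(3), where $\bbone_{[m]}$ sits in the lineality yet is not an isolated-multiedge indicator.  So the statement of~(1) itself appears to omit the contributions from non-multiedge components, and your block argument cannot close that discrepancy.
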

\begin{proof}
  Follows straightforwardly from the definition.
\end{proof}

And from \cref{lem:tropicalization}, we immediately obtain:

\begin{corollary}
  \label{cor:tropicalization}\
  \begin{enumerate}
  \item If $\ee\in\EE(\GG)$ is an isolated multiedge in $\GG$, then $\Trop(\GG)$ is invariant under translation by $\bbone_{\ee}$ in the sense that $\sigma=\sigma+\RR\cdot \bbone_{\ee}$ for all $\sigma\in\Trop(\GG)$.
  \item \label{enumitem:isolatedTriangleAndEdges} If $\GG$ is a disjoint union of multiedges $\ee_1, \dots, \ee_r$, then
    \begin{equation*}
      \Trop(\GG) = \big\{ \Span(\bbone_{\ee_1}, \dots, \bbone_{\ee_r},\bbone_{[m]})\big\}.
    \end{equation*}
  \item \label{enumitem:isolatedEdges} If $\GG$ is a disjoint union of multiedges $\ee_4,\dots,\ee_r$ and a single multitriangle with multiedges $\ee_1,\ee_2,\ee_3$, then
    \begin{equation*}
      \Trop(\GG) = \{\sigma_1,\sigma_2,\sigma_3\} \quad\text{where}\quad \sigma_i\coloneqq \RR_{\geq 0}\cdot \bbone_{\ee_i} + \Span(\bbone_{\ee_4}, \dots, \bbone_{\ee_r},\bbone_{[m]}).
    \end{equation*}
  \end{enumerate}
\end{corollary}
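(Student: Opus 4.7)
The plan is to derive all three claims directly from \cref{lem:tropicalization}, essentially by enumerating the chains of proper flats in each of the specified configurations and then rewriting the resulting spans using the identity $\bbone_{[m]}=\sum_{i}\bbone_{\ee_i}$, valid whenever the multiedges $\ee_i$ partition the edge set.

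For part (1), I would simply note that if $\ee\in\EE(\GG)$ is an isolated multiedge, then $\bbone_{\ee}\in L$ by \cref{lem:tropicalization}(1). Since the lineality space $L$ is contained in every cone $\sigma\in\Trop(\GG)$, translation by any element of $L$, and in particular by $\bbone_{\ee}$, preserves each cone, yielding $\sigma+\RR\cdot\bbone_{\ee}=\sigma$.

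For part (2), each connected component of $\GG$ is itself a single multiedge $\ee_i$, whose only proper flat is $\emptyset$. Hence the only chain of proper flats on each component is the empty chain of length $0$, and by \cref{lem:tropicalization}(2) the fan $\Trop(\GG)$ consists of a single cone, namely its lineality space $L=\Span(\bbone_{\ee_1},\dots,\bbone_{\ee_r})$. Since the $\ee_i$ partition $[m]$, we have $\bbone_{[m]}=\sum_i\bbone_{\ee_i}\in L$, so $L$ coincides with $\Span(\bbone_{\ee_1},\dots,\bbone_{\ee_r},\bbone_{[m]})$ as claimed.

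For part (3), the only non-routine step is to enumerate the proper flats of a multitriangle $T$ with multiedges $\ee_1,\ee_2,\ee_3$; I would argue that these are precisely $\emptyset,\ee_1,\ee_2,\ee_3$. Indeed, a non-empty proper subset of a single $\ee_i$ fails to be a flat, since the vertex-induced subgraph on the two endpoints of $\ee_i$ has edge set all of $\ee_i$; and any subset of $E(T)$ meeting two or more of the $\ee_i$ has a single connected component spanning all three vertices of $T$, which forces its vertex-induced subgraph to equal $\ee_1\cup\ee_2\cup\ee_3=E(T)$, so such a subset cannot be a proper flat. Therefore $T$ admits exactly three maximal chains of proper flats, each of length $1$, namely $\emptyset\subsetneq\ee_i$ for $i=1,2,3$. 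Combining $T$ with the isolated multiedges $\ee_4,\dots,\ee_r$ via \cref{lem:tropicalization}(2) produces three maximal cones
\begin{equation*}
\sigma_i=\RR_{\geq 0}\cdot\bbone_{\ee_i}+\RR\cdot\bbone_{\ee_1\cup\ee_2\cup\ee_3}+\Span(\bbone_{\ee_4},\dots,\bbone_{\ee_r}),
\end{equation*}
and the identity $\bbone_{\ee_1\cup\ee_2\cup\ee_3}=\bbone_{[m]}-\sum_{j=4}^{r}\bbone_{\ee_j}$ rewrites the lineality factor as $\Span(\bbone_{\ee_4},\dots,\bbone_{\ee_r},\bbone_{[m]})$, matching the stated form. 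The main (rather minor) obstacle is the flat enumeration for the multitriangle, but as described it is a short combinatorial check.
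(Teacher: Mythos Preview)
Your proposal is correct and follows exactly the approach the paper indicates: the paper simply states that the corollary follows immediately from \cref{lem:tropicalization} and gives no further argument, so your write-up is precisely the kind of detail-filling the paper leaves to the reader. The flat enumeration for the multitriangle and the $\bbone_{[m]}=\sum_i\bbone_{\ee_i}$ rewriting are the only non-automatic steps, and you have handled both correctly.
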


\subsection{Tropical intersection numbers}
In this section, we assume some basic familiarity with balanced polyhedral complexes and recall the concept of their stable intersection and the resulting tropical intersection numbers.  We then use them to define the realization number of Laman graphs.  More information on stable intersections can be found in \cite[Section 3.6]{MaclaganSturmfels2015}.

\begin{definition}
  \label{def:stableIntersectionMS}
  Let $\Sigma_1, \Sigma_2$ be two balanced polyhedral complexes in $\RR^m$. The \emph{stable intersection} of $\Sigma_1$ and $\Sigma_2$ is defined to the polyhedral complex
  \begin{equation*}
    \Sigma_1\wedge\Sigma_2 \coloneqq \Big\{ \sigma_1\cap\sigma_2 \mid \dim(\sigma_1+\sigma_2)=m\Big\}
  \end{equation*}
  together with multiplicities defined by
  \begin{equation*}
    \mult_{\Sigma_1\wedge\Sigma_2}(\sigma_1\cap\sigma_2)\coloneqq \sum_{\tau_1, \tau_2} \mult_{\Sigma_1}(\tau_1)\cdot \mult_{\Sigma_2}(\tau_2) \cdot [N:N_1+N_2]
  \end{equation*}
  where $\tau_i\in\Sigma_i$ such that $(\sigma_1\cap\sigma_2)\subseteq\tau_i$ for $i={1,2}$ and $\tau_1\cap(\tau_2+\varepsilon\cdot u)\neq \emptyset$ for some fixed $u\in\RR^m$ generic and $\varepsilon>0$ sufficiently small, and $N\coloneqq \ZZ^m$ is the standard lattice in $\RR^m$, $N_i\coloneqq L_i\cap\ZZ^m$ are the sublattices induced by $\tau_i$, and $[N:N_1+N_2]$ is the index of sublattice $N_1+N_2$ inside $N$.
  \\Alternatively, by \cite[Proposition 3.6.12]{MaclaganSturmfels2015}, one can show that
  \begin{equation*}
    \Sigma_1\wedge\Sigma_2=\lim_{\varepsilon\rightarrow 0}\Sigma_1\wedge (\Sigma_2+\varepsilon\cdot u) \qquad\text{for } u\in\RR^m\text{ generic}.
  \end{equation*}
\end{definition}




\begin{theorem}[{\cite[Theorem 3.6.10]{MaclaganSturmfels2015}}]
  \label{thm:stableIntersection}
  Let $\Sigma$ and $\Sigma'$ be two balanced polyhedral complexes in $\RR^m$. Then their stable intersection $\Sigma\wedge\Sigma'$ is either empty or a balanced polyhedral complex with
  \begin{equation*}
    \codim(\Sigma\wedge\Sigma') = \codim(\Sigma)+\codim(\Sigma').
  \end{equation*}
  Moreover, if $\codim(\Sigma)+\codim(\Sigma')+l>m$, where $l$ is the dimension of the intersection of the lineality spaces of $\Sigma$ and $\Sigma'$, then $\Sigma\wedge\Sigma'$ is empty.
\end{theorem}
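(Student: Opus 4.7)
The plan is to handle the three assertions---the codimension formula, balancedness of $\Sigma\wedge\Sigma'$, and the emptiness criterion---separately, exploiting throughout the perturbation description $\Sigma\wedge\Sigma'=\lim_{\varepsilon\to 0}\Sigma\cap(\Sigma'+\varepsilon u)$ for generic $u\in\RR^m$ recorded in \cref{def:stableIntersectionMS}. The codimension formula is immediate from transversality: every cell of $\Sigma\wedge\Sigma'$ has the form $\sigma\cap\sigma'$ with $\dim(\sigma+\sigma')=m$ by definition, so $\dim(\sigma\cap\sigma')=\dim\sigma+\dim\sigma'-m$, which for maximal $\sigma\in\Sigma$ and $\sigma'\in\Sigma'$ gives $\codim(\Sigma\wedge\Sigma')=\codim\Sigma+\codim\Sigma'$. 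For the emptiness claim, let $V_\sigma,V_{\sigma'}\subseteq\RR^m$ denote the linear subspaces underlying the cells $\sigma,\sigma'$; both contain the respective global lineality spaces $L,L'$ of $\Sigma,\Sigma'$, whose intersection has dimension $l$. Hence $\dim(\sigma+\sigma')=\dim(V_\sigma+V_{\sigma'})\leq\dim\sigma+\dim\sigma'-l$, and for maximal $\sigma,\sigma'$ this is bounded above by $2m-\codim\Sigma-\codim\Sigma'-l$. Requiring $\dim(\sigma+\sigma')=m$ therefore forces $\codim\Sigma+\codim\Sigma'+l\leq m$, whose contrapositive is the stated criterion.

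For balancedness, I would localise to each codimension-one cell $\tau$ of $\Sigma\wedge\Sigma'$ and verify the balancing relation there. After a small generic shift $\Sigma'\mapsto\Sigma'+\varepsilon u$, one may arrange that every maximal cell of $\Sigma\wedge\Sigma'$ containing $\tau$ takes the form $\sigma_1\cap\sigma_2$ with $\sigma_2\in\Sigma'$ a fixed maximal cell and $\sigma_1$ running over the maximal cells of $\Sigma$ containing a single fixed codimension-one face $\tau_1$ with $\tau=\tau_1\cap\sigma_2$; the dual configuration with the roles of $\Sigma,\Sigma'$ swapped is handled symmetrically. The primitive normal vectors of $\sigma_1\cap\sigma_2$ at $\tau$ lie in the one-dimensional quotient $V_\sigma/V_\tau$, which is canonically isomorphic to $V_{\sigma_1}/V_{\tau_1}$, and correspond to the primitive normal vectors of $\sigma_1$ at $\tau_1$ up to exactly the scalar factor $[N:N_1+N_2]$ of \cref{def:stableIntersectionMS}. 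Thus the balancing relation of $\Sigma\wedge\Sigma'$ at $\tau$ is the image of the balancing relation of $\Sigma$ at $\tau_1$ under this isomorphism, scaled by $\mult_{\Sigma'}(\sigma_2)$.

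The main obstacle is precisely this lattice-index bookkeeping: one must verify that the multiplicity factor $[N:N_1+N_2]$ is tailored so that the resulting relation in the quotient lattice is a $\ZZ$-linear relation among primitive generators, not merely a $\QQ$-linear one. A convenient way to dispatch this is to pass to the stars $\Star_\tau(\Sigma\wedge\Sigma')$, $\Star_{\tau_1}(\Sigma)$, and $\Star_{\sigma_2}(\Sigma')$, reducing everything to fans whose linealities coincide with the affine hulls of the respective base cells. The claim then reduces to a finite linear-algebra statement in a lattice quotient of rank $\codim\Sigma+\codim\Sigma'+1$, verifiable by a direct computation with the combined normal matrix of $\sigma_1,\sigma_2$ via, for instance, Smith normal form.
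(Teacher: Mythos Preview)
The paper does not prove this statement: it is quoted as a background result from \cite[Theorem~3.6.10]{MaclaganSturmfels2015}, with no argument supplied. There is thus no ``paper's own proof'' to compare your proposal against.

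On the substance of your attempt: the codimension formula and the emptiness criterion are dispatched correctly by the elementary linear algebra you give. For balancedness your outline follows the standard route and is heading in the right direction, but two steps remain genuinely unfinished. First, the assertion that after a generic shift every codimension-one face $\tau$ of the intersection lies in a single fixed maximal cell $\sigma_2\in\Sigma'$ (so that the adjacent maximal cells vary only in the $\Sigma$-factor, or symmetrically) requires an actual genericity argument you have not written; this is exactly where the perturbation description from \cref{def:stableIntersectionMS} is invoked. Second, the final paragraph---``reduces to a finite linear-algebra statement\dots verifiable via Smith normal form''---is not a wrap-up but the core of the proof: one must check explicitly that the primitive generator of $(\sigma_1\cap\sigma_2)/\tau$ in the quotient lattice equals $[N:N_1+N_2]$ times the image of the primitive generator of $\sigma_1/\tau_1$, and then sum over the $\sigma_1$ to transport the balancing relation. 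None of this is wrong, but as written it is an outline rather than a proof.
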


\begin{definition}
  \label{def:tropicalIntersectionProduct}
  Let $\GG$ and $\GG'$ be two multigraphs with $m$ edges such that $\codim(\Trop(\GG))+\codim(\Trop(\GG'))=m-1$, so that the stable intersection $\Trop(\GG)\wedge (-\Trop(\GG'))$ is either empty or one-dimensional by \cref{thm:stableIntersection}, in which case it consists solely of $\Span(1_{[m]})$.  We define their \emph{tropical intersection product} to be
  \forGlasgow
    {
    \begin{equation*}
       \Trop(\GG)\cdot (-\Trop(\GG')) \coloneqq 
    \begin{cases}
            0 & \text{if } \Trop(\GG)\wedge (-\Trop(\GG'))=\emptyset,\\
           \mult_{\Trop(\GG)\wedge (-\Trop(\GG'))}(\Span(1_{[m]})) &\text{otherwise}.
       \end{cases}
    \end{equation*}
    }
  \forArXiv
  {
  \begin{equation*} 
     \Trop(\GG)\cdot (-\Trop(\GG')) \coloneqq 
     \begin{cases}
         0 \hspace{45.5mm} \text{if } \Trop(\GG)\wedge (-\Trop(\GG'))=\emptyset,\\
         \mult_{\Trop(\GG)\wedge (-\Trop(\GG'))}(\Span(1_{[m]})) \hspace{25mm} \text{otherwise}.
     \end{cases}
  \end{equation*}
  }
\end{definition}

We will use tropical intersection numbers to define the realization number of a Laman graph, which are normally defined via structural rigidity properties.  The fact that both numbers coincide is \cite[Theorem 3.8]{ClarkeDewarTrippMaxwellNixonRenSmith2025}.

\begin{definition}
  \label{thm:realisationNumber}
  The \emph{realization number} of a Laman graph $G$ is
  \begin{equation*}
    c_2(M_G) \coloneqq \frac{\Trop(G)\cdot (-\Trop(G))}{2}.
  \end{equation*}
\end{definition}

\subsection{Tropical stars}
Finally, we introduce stars of balanced polyhedral complexes similar to \cite[Definition 2.3.6]{MaclaganSturmfels2015} and show how they can be used for lower bounds of tropical intersection numbers.

\begin{definition}
  \label{def:star}
  Let $\Sigma$ be a balanced polyhedral complex in $\RR^n$, and let $w\in|\Sigma|$ be a point in its support.  Then any $\sigma\in\Sigma$ gives rise to a polyhedral cone describing $\sigma$ around $w$, which is non-empty if and only if $w\in\sigma$:
  \begin{equation*}
    \Star_\sigma(w)\coloneqq \{u\in\RR^n \mid w + \varepsilon\cdot u\in\sigma \text{ for }\varepsilon>0 \text{ sufficiently small}\}.
  \end{equation*}
  The \emph{star} of $\Sigma$ around $w$ is the balanced polyhedral fan in $\RR^n$ given by
  \begin{align*}
    & \Star_\Sigma(w)\coloneqq \{\Star_\sigma(w)\mid \sigma\in\Sigma\} \quad\text{and}\quad \mult_{\Star_\Sigma(w)}(\Star_\sigma(w))\coloneqq \mult_\Sigma(\sigma).
  \end{align*}
  One can show that $\Star_\Sigma(w)$ is indeed balanced, and that $\Star_\Sigma(w)=\Star_\Sigma(w')$ if $w, w' \in\Relint(\tau)$ for some $\tau\in\Sigma$.  Hence, we define
  \begin{equation*}
    \Star_\Sigma(\tau)\coloneqq \Star_\Sigma(w)\quad\text{for any }w\in\Relint(\tau).
  \end{equation*}
\end{definition}

\begin{example}
  \label{ex:star}
  \cref{fig:stars} shows a tropical plane curve $\Sigma$ and its stars.  In particular, it illustrates how consecutive stars of $\Sigma$ are just normal stars of $\Sigma$, e.g., $\Star_{\Star_\Sigma(\tau_0)}(\sigma_1')=\Star_\Sigma(\sigma_1)$.
\end{example}

\begin{figure}[t]
  \centering
  \begin{tikzpicture}
    \node (first)
    {
      \begin{tikzpicture}[x={(0.7,0)},y={(0,0.7)}]
        \node[circle, scale= 0.3, fill, red](a1) at (0,0) []{};
        \node[circle, scale= 0.3, fill, red](a2) at (1.5,1.5)[]{};
        \node[circle, scale= 0.3, fill, red](a3) at (0,-2.1)[]{};
        \node[circle, scale= 0.3, fill, red](a4) at (-2.2,0)[]{};
        \node (a5) at (1.5,3)[]{};
        \node (a6) at (3,1.5)[]{};
        \node (a7) at (3,-2.1)[]{};
        \node (a8) at (-1.5,-3.6)[]{};
        \node (a9) at (-2.2,3)[]{};
        \node (a10) at (-3.6,-1.5)[]{};
        \node (a11) at (0,-3.5) []{$\Sigma$};

        \node(A1) at (0,0) [right, red] {$\tau_0$};
        \node(A2) at (1.5,1.5) [below right, red] {$\tau_3$};
        \node(A3) at (0,-2.1) [below, yshift = -1mm, red] {$\tau_2$};
        \node(A4) at (-2.2,0) [left, red] {$\tau_1$};

        \draw (a1) -- (a2) node[midway, left] {$\sigma_7$};
        \draw (a1) -- (a3)node[midway, left] {$\sigma_4$};
        \draw (a1) -- (a4)node[midway, above] {$\sigma _ 1$};
        \draw (a2) -- (a5) node [midway, left]{$\sigma_6$};
        \draw (a2) -- (a6) node [midway, above]{$\sigma_3$};
        \draw (a3) -- (a8)node [midway, above, left] {$\sigma_9$};
        \draw (a3) -- (a7) node [midway, above]{$\sigma_2$};
        \draw (a4) -- (a9) node [midway, left]{$\sigma_5$};
        \draw (a4) -- (a10) node [midway, left]{$\sigma_8$};

      \end{tikzpicture}
    };
    \node[xshift = -10mm] (second) at (first.west)
    {
      \begin{tikzpicture}[x={(0.7,0)},y={(0,0.7)}]
        \node[circle, scale= 0.3, fill](b1) at (0,0) []{};
        \node(b2) at (1.5,1.5)[]{};
        \node(b3) at (-2.1, 0)[]{};
        \node(b4) at (0,-2.1) [] {};
        \node(b5) at (-0.3,-2.5) []{$\Sigma_0 ' = \Star _ {\Sigma} (\tau _ 0)$};

        \draw (b1)--(b2) node[pos = 0.6, left, xshift = -1mm] {$\sigma_3 '$};
        \draw (b1) -- (b3) node[midway, above, xshift = -1mm] {$\sigma_1 '$};
        \draw (b1) -- (b4) node[midway, right] {$\sigma_2 '$};

        \node[circle, scale= 0.3, fill](c1) at (0,-6) [] {};
        \node(c2) at (2.1,-6)[]{};
        \node(c3) at (0, -3.9)[]{};
        \node(c4) at (-1.5,-7.5) [] {};
        \node(c5) at (-0.3,-8) []{$\Sigma_i ' = \Star _ {\Sigma} (\tau _ i)$};
        \node[anchor=base west] at (c5.base east) {$i=1,2,3$};

        \draw (c1) -- (c2) node[pos = 0.5, above, xshift = -1mm] {$\sigma_1 '$};
        \draw (c1) -- (c3) node[pos = 0.5, left] {$\sigma_2 '$};
        \draw (c1) -- (c4) node[pos = 0.5, below,right,  xshift = 1mm] {$\sigma_3 '$};
   \end{tikzpicture}
    };
    \node[xshift = 30mm] (third) at (first.east)
    {
      \begin{tikzpicture}[x={(0.7,0)},y={(0,0.7)}]
        \node(d1) at (-1.05,-0.5) [] {};
        \node(d2) at (1.05,-0.5)[]{};
        \node(d3) at (2.5,0.25) []{$\Star _ {\Sigma} (\sigma _ i)$};
        \node[anchor=base west] at (d3.base east) []{$i=1,2,3$};
        \node(d5) at (2.5,-1.25) [] {$\Star _ {\Sigma'_j} (\sigma _ 1')$};
        \node[anchor=base west] at (d5.base east) []{$j=1,2$};
        \draw[draw opacity=0] (d5) -- node[sloped] {$=$} (d3);
        \draw (d1) -- (d2) {};

        \node(e1) at (0,-4.9) [] {};
        \node(e2) at (0,-2.9)[]{};
        \node(e3) at (2.5,-3.15) []{$\Star _ {\Sigma} (\sigma _ i)$};
        \node[anchor=base west] at (e3.base east) []{$i=4,5,6$};
        \node(e5) at (2.5,-4.65) [] {$\Star _ {\Sigma'_j} (\sigma _ 2')$};
        \node[anchor=base west] at (e5.base east) []{$j=1,2$};
        \draw[draw opacity=0] (e5) -- node[sloped] {$=$} (e3);
        \draw (e1) -- (e2) {};

        \node(f1) at (-1.05,-8.25) [] {};
        \node(f2) at (1.05,-6.25)[]{};
        \node(f3) at (2.5,-6.5) []{$\Star _ {\Sigma} (\sigma _ i)$};
        \node[anchor=base west] at (f3.base east) []{$i=7,8,9$};

        \node(f5) at (2.5,-8) [] {$\Star _ {\Sigma'_j} (\sigma _ 3')$};
        \node[anchor=base west] at (f5.base east) []{$j=1,2$};
        \draw[draw opacity=0] (f5) -- node[sloped] {$=$} (f3);
        \draw (f1) -- (f2) {};
      \end{tikzpicture}
    };
  \end{tikzpicture}
  \caption{All stars of the polyhedral complex $\Sigma$\pointOrNoPoint}
  \label{fig:stars}
\end{figure}
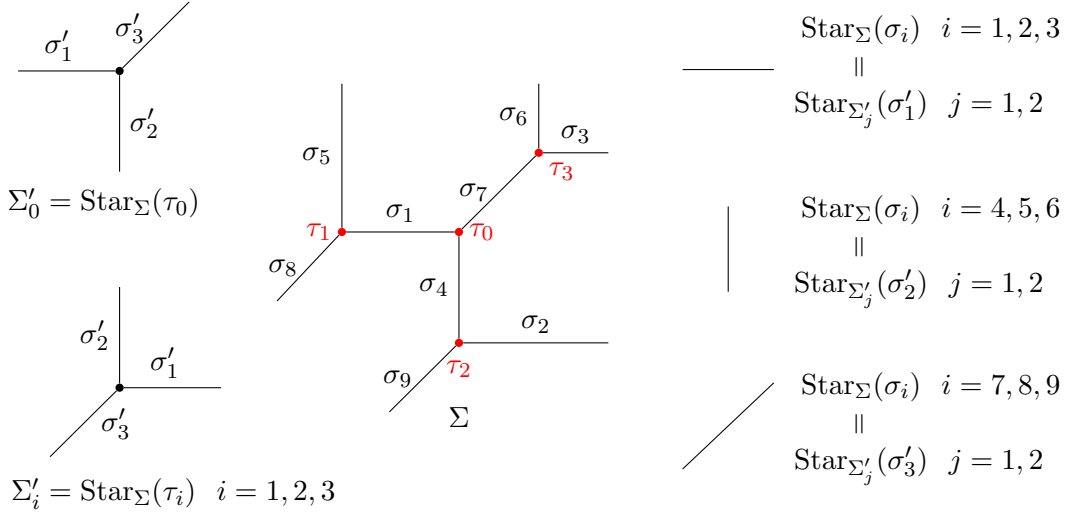

As illustrated in \cref{ex:star}, the star is combinatorially easier than the original polyhedral complex, hence it is not surprising that replacing polyhedral complexes by their stars cannot increase the tropical intersection product.

\begin{lemma}
  \label{lem:intersectionProductAndStar}
  Let $\Sigma$, $\Sigma'$ be two balanced polyhedral complexes of complementary dimension.  Then
  \begin{equation*}
    \Sigma\cdot\Sigma'\geq \Star_{\Sigma}(\sigma)\cdot \Sigma' \qquad\text{ for all }\sigma\in\Sigma.
  \end{equation*}
\end{lemma}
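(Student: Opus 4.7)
The plan is to use the perturbation characterization of stable intersection from \cite[Proposition 3.6.12]{MaclaganSturmfels2015}: for a generic direction $u\in\RR^m$ and sufficiently small $\varepsilon>0$, both $\Sigma\cdot\Sigma'$ and $\Star_\Sigma(\sigma)\cdot\Sigma'$ can be written as weighted counts of transverse intersection points with $\Sigma'+\varepsilon u$.  Because the multiplicities from \cref{def:stableIntersectionMS} are products of positive cone multiplicities with positive lattice indices, every term in these counts is nonnegative, so the inequality will reduce to realising the weighted count for $\Star_\Sigma(\sigma)\cdot\Sigma'$ as a partial sum of the weighted count for $\Sigma\cdot\Sigma'$.

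Concretely, I would fix a point $w\in\Relint(\sigma)$ and exploit the defining property of the star, namely the local identification $\Sigma\cap U=(w+\Star_\Sigma(\sigma))\cap U$ on a sufficiently small neighborhood $U$ of $w$.  Using translation invariance of the stable intersection number I may replace $\Sigma'$ by a translate passing through a chosen point in one of its top cones, and then, exploiting the scaling invariance $\lambda\cdot\Star_\Sigma(\sigma)=\Star_\Sigma(\sigma)$ for $\lambda>0$, shrink the perturbation parameter so that the transverse intersections relevant to $\Star_\Sigma(\sigma)\cdot\Sigma'$ are all confined to $U-w$.  Translating back by $w$ identifies these intersection points with a subset of those contributing to $\Sigma\cdot\Sigma'$.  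Summing the contributions over the top-dimensional cones of $\Sigma'$ assembles the full weighted count $\Star_\Sigma(\sigma)\cdot\Sigma'$, while the remaining transverse intersections of $\Sigma\cap(\Sigma'+\varepsilon u)$ contribute nonnegatively to the total, giving the inequality.

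The main obstacle is making this correspondence of transverse intersections rigorous while preserving the lattice-index factor $[N:N_1+N_2]$ appearing in \cref{def:stableIntersectionMS}.  Because this index depends on the lattice sublattices spanned by the relevant top-dimensional cones of the two complexes, one must verify that both the translation and the rescaling used above are lattice preserving so that the index is invariant under the matching.  With that index-preservation in hand the term-by-term comparison of multiplicities goes through, and the inequality follows from the nonnegativity of every term in the weighted counts.
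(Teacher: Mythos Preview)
Your overall strategy---match each transverse intersection contributing to $\Star_\Sigma(\sigma)\cdot\Sigma'$ with one contributing to $\Sigma\cdot\Sigma'$, via the local identification of $\Sigma$ with $w+\Star_\Sigma(\sigma)$ near $w$---is sound in spirit, but the localization step has a gap. Shrinking the perturbation parameter $\varepsilon$ does \emph{not} force the transverse intersection points of $\Star_\Sigma(\sigma)\cap(\Sigma'+\varepsilon u)$ into a prescribed neighborhood of $w$: as $\varepsilon\to 0$ these points merely approach the points of the unperturbed stable intersection $\Star_\Sigma(\sigma)\wedge\Sigma'$, which in general are scattered throughout $\RR^m$ (one near each place a maximal cell of $\Sigma'$ meets a maximal cone of the star transversally). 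A single translate of $\Sigma'$ cannot bring all of them near $w$ simultaneously, and choosing a different translate for each top cell of $\Sigma'$ is incompatible with counting intersections for one fixed generic perturbation. So the matching you describe cannot be carried out as written.

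The paper supplies the missing localization by scaling the \emph{complex} rather than the perturbation: after translating so that $0\in\Relint(\sigma)$, it replaces $\Sigma$ by $t\cdot\Sigma$ and lets $t\to\infty$. The intersection number $(t\cdot\Sigma)\cdot\Sigma'$ is independent of $t$; inside any ball around $0$ the dilated complex eventually coincides with $\Star_\Sigma(0)$; and the radius on which this coincidence holds tends to infinity. Thus every intersection point of $\Star_\Sigma(0)\wedge\Sigma'$ is eventually an intersection point of $(t\cdot\Sigma)\wedge\Sigma'$, while any surplus points escape to infinity, giving the inequality. Equivalently one could scale $\Sigma'$ down rather than $\Sigma$ up, which is perhaps closer to your intuition---but either way the missing ingredient is a dilation of one of the two complexes, not merely of the perturbation vector. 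Incidentally, the lattice-index concern you flag as the main obstacle is harmless: translations and positive dilations do not change the linear spans of the cones, hence leave the sublattices $N_i$ and the index $[N:N_1+N_2]$ untouched.
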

\begin{proof}
    Without loss of generality, we may assume that $w\coloneqq 0\in\Relint(\sigma)$.

  For $t>0$, let $t\cdot\Sigma$ be the balanced polyhedral complex with polyhedra $t\cdot\sigma$, where $\sigma\in\Sigma$ and $t\cdot (\ldots)$ denotes linear scaling by $t$, and multiplicities $\mult_{t\cdot\Sigma}(t\cdot\sigma)=\mult_{\Sigma}(\sigma)$.  Then $(t\cdot \Sigma)\cdot\Sigma'=\Sigma\cdot\Sigma'$ for all $t>0$, and $t\cdot \Sigma$ converges to $\Star_{\Sigma}(w)$ as $t$ goes to $\infty$. Note that the stable intersection points of $t\cdot\Sigma\wedge\Sigma'$ varies continuously in $t$.

  Let further $r_t\coloneqq\min(\|u\|\mid u\in \sigma, \sigma\in t\cdot\Sigma, 0\notin\sigma)$ denote the minimal distance between $0$ and all polyhedra of $t\cdot \Sigma$ not containing $0$, and let $B_t$ denote the ball around $0$ of radius $r_t$.  Then $t\cdot \Sigma$ and $(t+s)\cdot\Sigma$ coincide inside $B_t$ for all $s>0$, and thus $w\in |t\cdot \Sigma\wedge\Sigma'|\cap B_t$ implies $w\in\Star_{\Sigma}(w)\wedge\Sigma'$.  Moreover, $B_t$ converges to $\RR^n$ as $t$ goes to $\infty$.

  Then, as $t$ goes to $\infty$, any intersection point of $t\cdot\Sigma\wedge\Sigma'$ either falls into $B_t$, becoming an intersection point of $\Star_{\Sigma}(w)\cdot\Sigma'$ or diverges to infinity. This shows the claim.
\end{proof}


\section{Graph excisions and tropical stars}\label{sec:excision}
In this section, we introduce graph excisions, which are important as they correspond to stars of the tropicalization.  These will play a central role hereon.

\begin{definition}
  \label{def:excision}
  Let $\GG$ be a multigraph.  Let $\ee\in \EE(G)$ be one of its non-isolated multiedges, and let $v_1,v_2\in V(\GG)$ be the two vertices it connects.  We define a new graph $\GG\curvearrowright \ee$, or \emph{$\GG$ excised $\ee$} in words, to be the multigraph with vertex set $V(\GG\curvearrowright \ee) \coloneqq V(\GG)\sqcup\{v_{12}\}$ and edge set $E(\GG\curvearrowright \ee)$ consisting of three types of edges
  \begin{enumerate}
  \item $e\in E(\GG\curvearrowright \ee)$ if $e\in E(G)$ with $e\in \ee$,
  \item $e\in E(\GG\curvearrowright \ee)$ if $e\in E(G)$ with $v_1\notin e$ and $v_2\notin e$,
  \item $\{v,v_{12}\}\in E(\GG\curvearrowright \ee)$ if $\{v,v_1\}\in E(\GG)$ or $\{v,v_2\}\in E(\GG)$.
  \end{enumerate}
\end{definition}

\begin{example}
  Let $G$ be the complete graph on $3$ vertices from \cref{ex:tropicalization}.  Then $G$ has $3$ possible excisions, all of which consists of isolated multiedges, see \cref{fig:running}:
  \begin{equation*}
    \EE( G\curvearrowright 1 ) = \big\{ \{1\}, \{23\} \big\},\quad
    \EE( G\curvearrowright 2 ) = \big\{ \{2\}, \{13\} \big\},\quad\text{and}\quad
    \EE( G\curvearrowright 3 ) = \big\{ \{3\}, \{12\} \big\}.
  \end{equation*}
\end{example}

\begin{example}
  \label{ex:excision}
  Let $G$ be the complete graph on $4$ vertices without an edge.  \cref{fig:excisions} shows $G$ and all its excisions. The labels of the curved arrows represent the multiedges being excised, so that $G\curvearrowright 3\curvearrowright 12 = G\curvearrowright 3\curvearrowright 45$ and $G\curvearrowright 1\curvearrowright 5 = G\curvearrowright 5\curvearrowright 1$.
\end{example}

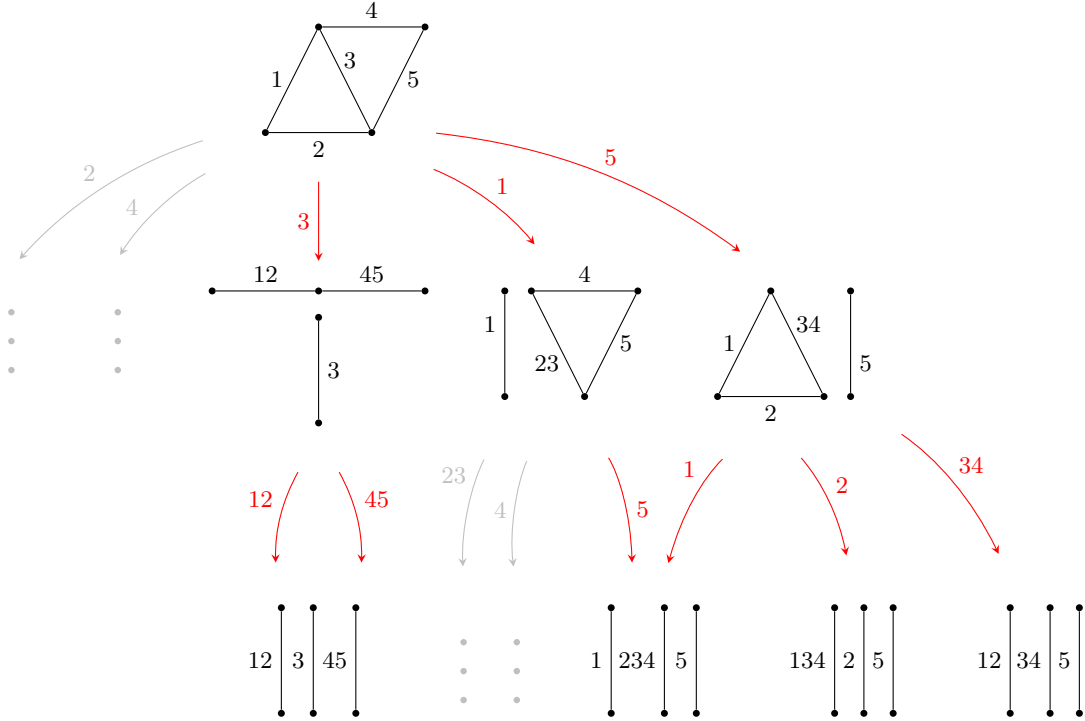
\begin{figure}[t]
  \begin{tikzpicture}[every node/.style = {font=\footnotesize}]
    \node (first)
    {
    };
    \node[anchor=north east] (second) at (first.south)
    {
    };
    \node[anchor=north] (third) at (second.south)
    {
      \begin{tikzpicture}[x={(0.7,0)},y={(0,0.7)}]

        \node[circle, scale= 0.3, fill](a1) at (5.5,14)[]{};
        \node[circle, scale= 0.3, fill](a2) at (7.5,14)[]{};
        \node[circle, scale= 0.3, fill](a3) at (6.5,16)[]{};
        \node[circle, scale= 0.3, fill](a4) at (8.5,16)[]{};

        \draw (a1)--(a2) node[midway, below] {$2$};
        \draw (a2)--(a3) node [pos = 0.7,right] {$3$};
        \draw (a3)--(a1) node[midway, left] {$1$};
        \draw (a4)--(a2) node[midway, right] {$5$};
        \draw (a4)--(a3) node[midway, above] {$4$};

        \node[](b1) at (0.5,9){};
        \node[](b2) at (0.5,11.5){};
        \node[](b3) at (2.5,9){};
        \node[](b4) at (2.5,11.5){};
        \node[circle, scale= 0.3, fill](b5) at (4.5,11){};
        \node[circle, scale= 0.3, fill](b6) at (6.5,11){};
        \node[circle, scale= 0.3, fill](b7) at (8.5,11){};
        \node[circle, scale= 0.3, fill](b8) at (6.5,10.5){};
        \node[circle, scale= 0.3, fill](b9) at (6.5,8.5){};
        \node[circle, scale= 0.3, fill](b10) at (10,11){};
        \node[circle, scale= 0.3, fill](b11) at (10,9){};
        \node[circle, scale= 0.3, fill](b12) at (10.5,11){};
        \node[circle, scale= 0.3, fill](b13) at (12.5,11){};
        \node[circle, scale= 0.3, fill](b14) at (11.5,9){};
        \node[circle, scale= 0.3, fill](b15) at (14,9){};
        \node[circle, scale= 0.3, fill](b16) at (15,11){};
        \node[circle, scale= 0.3, fill](b17) at (16,9){};
        \node[circle, scale= 0.3, fill](b18) at (16.5,9){};
        \node[circle, scale= 0.3, fill](b19) at (16.5,11){};

        \path (b1) -- (b2) node [ gray!50, font=\Huge, midway, sloped] {$\dots$};
        \path (b3) -- (b4) node [gray!50, font=\Huge, midway, sloped] {$\dots$};
        \draw (b5)--(b6) node[midway, above]{$12$};
        \draw (b6)--(b7)node[midway, above]{$45$};
        \draw (b8)--(b9)node[midway, right]{$3$};
        \draw (b10)--(b11)node[pos= 0.3, left]{$1$};
        \draw (b12)--(b13) node[midway, above]{$4$};
        \draw (b13)--(b14)node[midway, right]{$5$};
        \draw (b14)--(b12)node[pos = 0.3, left]{$23$};
        \draw (b15)--(b16)node[pos = 0.5, left]{$1$};
        \draw (b16)--(b17)node[pos = 0.3, right]{$34$};
        \draw (b17)--(b15)node[pos = 0.5, below]{$2$};
        \draw (b18)--(b19)node[pos = 0.3, right]{$5$};

        \node[](c1) at (9,3){};
        \node[](c2) at (9,5){};
        \node[](c3) at (10,3){};
        \node[](c4) at (10,5){};
        \node[circle, scale= 0.3, fill](c5) at (5.8,3){};
        \node[circle, scale= 0.3, fill](c6) at (5.8,5){};
        \node[circle, scale= 0.3, fill](c7) at (6.4,3){};
        \node[circle, scale= 0.3, fill](c8) at (6.4,5){};
        \node[circle, scale= 0.3, fill](c9) at (7.2,3){};
        \node[circle, scale= 0.3, fill](c10) at (7.2,5){};
        \node[circle, scale= 0.3, fill](c11) at (12,3){};
        \node[circle, scale= 0.3, fill](c12) at (12,5){};
        \node[circle, scale= 0.3, fill](c13) at (13,5){};
        \node[circle, scale= 0.3, fill](c14) at (13,3){};
        \node[circle, scale= 0.3, fill](c15) at (13.6,3){};
        \node[circle, scale= 0.3, fill](c16) at (13.6,5){};
        \node[circle, scale= 0.3, fill](c17) at (16.2,3){};
        \node[circle, scale= 0.3, fill](c18) at (16.2,5){};
        \node[circle, scale= 0.3, fill](c19) at (16.75,3){};
        \node[circle, scale= 0.3, fill](c20) at (16.75,5){};
        \node[circle, scale= 0.3, fill](c21) at (17.3,3){};
        \node[circle, scale= 0.3, fill](c22) at (17.3,5){};
        \node[circle, scale= 0.3, fill](c23) at (19.5,3){};
        \node[circle, scale= 0.3, fill](c24) at (19.5,5){};
        \node[circle, scale= 0.3, fill](c25) at (20.25,3){};
        \node[circle, scale= 0.3, fill](c26) at (20.25,5){};
        \node[circle, scale= 0.3, fill](c27) at (20.8,3){};
        \node[circle, scale= 0.3, fill](c28) at (20.8,5){};

        \path (c1) -- (c2) node [gray!50, font=\Huge, midway, sloped] {$\dots$};
        \path (c3) -- (c4) node [gray!50, font=\Huge, midway, sloped] {$\dots$};
        \draw (c5)--(c6)node[pos = 0.5, left]{$12$};
        \draw (c7)--(c8)node[pos = 0.5, left]{$3$};
        \draw (c9)--(c10)node[pos = 0.5, left]{$45$};
        \draw (c11)--(c12)node[pos = 0.5, left]{$1$};
        \draw (c13)--(c14)node[pos = 0.5, left]{$234$};
        \draw (c15)--(c16)node[pos = 0.5, left]{$5$};
        \draw (c17)--(c18)node[pos = 0.5, left]{134};
        \draw (c19)--(c20)node[pos = 0.5, left]{$2$};
        \draw (c21)--(c22)node[pos = 0.5, left]{$5$};
        \draw (c23)--(c24)node[pos = 0.5, left]{$12$};
        \draw (c25)--(c26)node[pos = 0.5, left]{$34$};
        \draw (c27)--(c28)node[pos = 0.5, left]{$5$};

        \draw [-stealth,shorten <=5mm, shorten >=5mm] (5,14) to [bend right = 15] node[above left,  color=gray!50]{$2$} (0.4,11) [gray!50];
        \draw [-stealth,shorten <=5mm, shorten >=5mm] (5,13.5) to [bend right = 15] node[above left, pos = 0.6,  color=gray!50]{$4$} (2.4,11) [gray!50];
        \draw [-stealth] (6.5,13) to node[pos=0.5, left,  color=red]{$3$} (6.5,11.5) [red];
        \draw [-stealth,shorten <=5mm, shorten >=5mm] (8,13.5) to [bend left = 15] node[above right,  color=red]{$1$} (11,11.25) [red];
        \draw [-stealth,shorten <=5mm, shorten >=5mm] (8,14) to [bend left = 15] node[above right,  color=red]{$5$} (15,11.25) [red];

        \draw [-stealth,shorten <=2mm, shorten >=2mm] (6.25,7.75) to [bend right = 15] node[above left,  color=red]{$12$} (5.7,5.5) [red];
        \draw [-stealth, shorten <=2mm, shorten >=2mm] (6.75,7.75) to [bend left = 15] node[above right,  color=red]{$45$} (7.3,5.5) [red];
        \draw [-stealth,shorten <=6mm, shorten >=5mm] (10,8.5) to [bend right = 15] node[above left,pos=0.4,  color=gray!50]{$23$} (9.25,5) [gray!50];
        \draw [-stealth,shorten <=6mm, shorten >=5mm] (10.75,8.5) to [bend right = 15] node[above left,pos=0.6,  color=gray!50]{$4$} (10.25,5) [gray!50];
        \draw [-stealth,shorten <=6mm, shorten >=2mm] (11.5,8.5) to [bend left = 15] node[above right, pos=0.7, color=red]{$5$} (12.4,5.5) [red];
        \draw [-stealth,shorten <=5mm, shorten >=2mm] (14.6,8.25) to [bend right = 15] node[above left, pos=0.4, color=red]{$1$} (13,5.5) [red];
        \draw [-stealth,shorten <=4mm, shorten >=3mm] (15.2,8.2) to [bend left = 15] node[above right, color=red]{$2$} (16.5,5.5) [red];
        \draw [-stealth,shorten <=6mm, shorten >=3.5mm] (16.75,8.7) to [bend left = 15] node[above right, color=red]{$34$} (19.5,5.5) [red];
      \end{tikzpicture}
    };
  \end{tikzpicture}
  \caption{A Laman graph $G$ and its (iterative) excisions\pointOrNoPoint}
  \label{fig:excisions}
\end{figure}

We can formalize the observation of commuting excisions in \cref{ex:excision} as follows:
\begin{lemma}
  \label{lem:excisionsCommuting}
  Let $\GG$ be a multigraph, and let $\ee_1,\ee_2\in\EE(\GG)$ be two of its multiedges on two different connected components.  Then
  \begin{equation*}
    \GG\curvearrowright\ee_1\curvearrowright\ee_2 = \GG\curvearrowright\ee_2\curvearrowright\ee_1.
  \end{equation*}
\end{lemma}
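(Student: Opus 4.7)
The plan is to prove the identity by directly unpacking \cref{def:excision} on both sides and observing that, because $\ee_1$ and $\ee_2$ live on disjoint connected components, the two excision operations act on disjoint pieces of combinatorial data and therefore commute trivially.

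First I would name things: write $v_1,v_2\in V(\GG)$ for the endpoints of $\ee_1$ and $v_3,v_4\in V(\GG)$ for the endpoints of $\ee_2$. The hypothesis that $\ee_1$ and $\ee_2$ lie in different connected components of $\GG$ has two consequences that I would record as explicit bullet facts before computing anything: (i) the four vertices $v_1,v_2,v_3,v_4$ are pairwise distinct, and (ii) no edge of $\GG$ is incident to both $\{v_1,v_2\}$ and $\{v_3,v_4\}$. In particular $\ee_1$ contains no edge touching $v_3$ or $v_4$, and $\ee_2$ contains no edge touching $v_1$ or $v_2$.

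Next I would compare vertex sets. By definition, both $\GG\curvearrowright \ee_1\curvearrowright \ee_2$ and $\GG\curvearrowright \ee_2\curvearrowright \ee_1$ have vertex set $V(\GG)\sqcup\{v_{12},v_{34}\}$; this is immediate, since after one excision the other multiedge still connects its two original vertices (they are unaffected by the first excision, again by (ii)). Then I would compare edge sets by walking through the three cases of \cref{def:excision} for the outer excision and checking that in each case one obtains the same edges on both sides. The main observation driving all three cases is that, by (ii), each edge of $\GG$ falls into exactly one of four mutually exclusive groups: (a) edges inside $\ee_1$, (b) edges inside $\ee_2$, (c) edges incident to exactly one of $v_1,v_2$, (d) edges incident to exactly one of $v_3,v_4$, and (e) edges avoiding all of $v_1,v_2,v_3,v_4$. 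Edges in (a), (b), (e) are preserved verbatim by both excisions; edges in (c) get rerouted to $v_{12}$, and edges in (d) to $v_{34}$, independently of the order.

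I do not expect a genuine obstacle: this is essentially a locality-of-excision statement. The only thing that requires a little care is being fastidious about the set-theoretic bookkeeping, in particular verifying that in the iterated excision $\GG\curvearrowright\ee_1\curvearrowright\ee_2$ the multiedge $\ee_2$ is indeed well-defined as the same edge set inside $\GG\curvearrowright \ee_1$ (so that the second excision operates on the ``same'' object regardless of order). This reduces once more to fact (ii). Once that is pinned down, matching the two edge sets clause-by-clause against \cref{def:excision} gives the equality directly.
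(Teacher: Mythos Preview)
Your proposal is correct and follows essentially the same approach as the paper's proof: name the endpoints of $\ee_1$ and $\ee_2$, unpack \cref{def:excision} on both sides, and compare vertex and edge sets. If anything, your version is more careful than the paper's---you explicitly verify that $\ee_2$ remains a multiedge of $\GG\curvearrowright\ee_1$ and partition the edges into disjoint groups (your ``four'' should read ``five''), whereas the paper simply asserts the edge sets agree ``as in \cref{def:excision}''.
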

\begin{proof}
  Let $v_1,v_2\in V(\GG)$ be the vertices $\ee_1$ connects, and let $w_1,w_2\in V(\GG)$ be the vertices $\ee_2$ connects. Then $\GG \curvearrowright \ee_1 \curvearrowright \ee_2$ has vertex set $V(\GG) \sqcup \{v_{12}\} \sqcup \{w_{12}\}$ and edge set $E(\GG \curvearrowright \ee_1 \curvearrowright \ee_2)$ with $\{v, v_{12}\}$ and $\{w,w_{12}\} \in E(\GG \curvearrowright \ee_1 \curvearrowright \ee_2)$ as in \cref{def:excision}. $\GG \curvearrowright \ee_2 \curvearrowright \ee_1$ has vertex set $V(\GG) \sqcup \{w_{12}\} \sqcup \{v_{12}\}$ and edge set $E(\GG \curvearrowright \ee_2 \curvearrowright \ee_1)$ with $\{v, v_{12}\}$ and $\{w,w_{12}\}$ also in $E(\GG \curvearrowright \ee_1 \curvearrowright \ee_2)$. Since the vertex sets and edge sets are the same, the two graphs are the same.
\end{proof}

Note that there is a straightforward relationship between chains of excisions and chains of flats:

\begin{lemma}
  \label{lem:excisionAndFlat}
  Let $\GG$ be a multigraph and let $\ee\in \EE(\GG)$ be one of its multiedges, i.e., a rank-$1$ flat of $\GG$.  Then any multiedge $\ee'\in\EE(\GG\curvearrowright \ee)$, $\ee'\neq\ee$ gives rise to a rank-$2$ flat $\ee\cup\ee'\subsetneq E(\GG)$, and any rank-$2$ flat $F\subsetneq E(\GG)$, $\ee\subsetneq F$, gives rise to a multiedge $F\setminus\ee\in\EE(\GG\curvearrowright\ee)$.
\end{lemma}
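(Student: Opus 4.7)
The plan is to exploit the explicit description of $\GG \curvearrowright \ee$ given in \cref{def:excision}: the vertices $v_1,v_2$ become isolated from the rest of the graph (they are joined only by the edges of $\ee$), while a new vertex $v_{12}$ inherits all adjacencies that $v_1$ or $v_2$ used to have. With this in mind, I would first observe that any multiedge $\ee' \in \EE(\GG \curvearrowright \ee)$ different from $\ee$ cannot involve $v_1$ or $v_2$, since these vertices have no other neighbours in $\GG \curvearrowright \ee$. Hence $\ee'$ splits into two cases according to whether or not it touches $v_{12}$.

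\textbf{Case (A):} $\ee'$ connects two vertices $w, w' \in V(\GG) \setminus \{v_1,v_2\}$. By rules (1)--(2) of \cref{def:excision}, the edges comprising $\ee'$ are exactly the same in $\GG$ as in $\GG \curvearrowright \ee$, so $\ee'$ is already a multiedge of $\GG$ between $w$ and $w'$. The subgraph $(V(\GG), \ee \cup \ee')$ then has two non-trivial connected components $\{v_1,v_2\}$ and $\{w,w'\}$, each vertex-induced by the maximality property of multiedges, and $|V(\GG)| - 2$ isolated vertices. So $\ee \cup \ee'$ is a flat of rank $|V(\GG)| - (|V(\GG)|-2) = 2$.

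\textbf{Case (B):} $\ee'$ connects $v_{12}$ with some $w \in V(\GG) \setminus \{v_1, v_2\}$. By rule (3) of \cref{def:excision}, the edges of $\ee'$ correspond in $\GG$ to \emph{all} edges from $w$ to $v_1$ or $v_2$; since $\ee'$ is a multiedge in $\GG \curvearrowright \ee$, there are no further such edges. Consequently $\ee \cup \ee'$ contains every edge of $\GG$ among the three vertices $\{v_1,v_2,w\}$, making this a vertex-induced component of the subgraph, the remaining vertices being isolated. Again, $\ee \cup \ee'$ is a flat of rank $2$.

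For the converse direction, I would classify the rank-$2$ flats of $\GG$ containing $\ee$ by the shape of the connected components of $(V(\GG), F)$. Since $F$ has rank $2$, the subgraph has $|V(\GG)| - 2$ components, so either exactly one component has $3$ vertices (and the rest are isolated) or exactly two components have $2$ vertices. The assumption $\ee \subsetneq F$ forces $\{v_1,v_2\}$ to lie in a non-trivial component. In the former case, the component is $\{v_1,v_2,w\}$ for some $w$, and vertex-induction of the component means $F \setminus \ee$ consists of all edges between $w$ and $\{v_1,v_2\}$, which become precisely a multiedge between $w$ and $v_{12}$ in $\GG \curvearrowright \ee$; in the latter case, $F \setminus \ee$ is a multiedge between two vertices outside $\{v_1,v_2\}$, hence is unchanged in $\GG \curvearrowright \ee$. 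Either way, $F \setminus \ee \in \EE(\GG \curvearrowright \ee)$, and the two constructions are mutually inverse.

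The argument is mostly a careful bookkeeping of which vertices and edges survive in the excision, so I do not anticipate any serious obstacle; the only subtle point is ensuring that the ``maximality'' built into the definition of multiedge (being the edge set of a \emph{vertex-induced} subgraph on two vertices) is used in both directions to conclude the flat condition, and to guarantee that $F \setminus \ee$ is indeed a full multiedge rather than a proper subset of one.
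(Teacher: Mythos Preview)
Your proof is correct and follows essentially the same approach as the paper: both split into the two cases according to whether the new multiedge $\ee'$ involves the merged vertex $v_{12}$ (forward direction), and whether the rank-$2$ flat $F$ has a single three-vertex component or two two-vertex components (reverse direction). Your write-up is somewhat more explicit about the rank computation and the role of vertex-inducedness, but the underlying argument is identical to the paper's.
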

\begin{proof}
  Follows directly from \cref{def:excision}:  Let $v_1,v_2\in V(\GG)$ denote the vertices of $\ee$ and let $v_{12}\in V(\GG\curvearrowright\ee)$ denote the vertex added by the excision.

  Consider a multiedge $\ee'\in \EE(\GG\curvearrowright\ee)$, $\ee'\neq\ee$.  If $v_{12}\notin V(\ee')$, then $\ee'$ is a multiedge of $\GG$ that is disjoint of $\ee$. If $v_{12}\in V(\ee')$, say $\ee'$ is the multiedge connecting $v_{12}$ and $v_3$, then $\ee'$ consists of all edges of $\GG$ connecting $v_3$ to either $v_1$ or $v_2$.  In both cases, $\ee\cup\ee'$ is a rank-$2$ flat containing $\ee$.

  Conversely, let $F$ be a rank-$2$ flat containing $\ee$.  Then either $F$ is a disjoint union of two multiedges $\ee$ and $F\setminus \ee$ or there is a vertex $v_3\in V(\GG)$ such that $F$ contains $\ee$ and all edges $F\setminus \ee\subseteq E(\GG)$ connecting $v_3$ to either $v_1$ or $v_2$.  In both cases, $F\setminus \ee$ is a multiedge of $\EE(\GG\curvearrowright\ee)$.
\end{proof}

Applying \cref{lem:excisionAndFlat} iteratively yields:

\begin{corollary}
  \label{cor:excisionsAndChains}
  Let $\GG$ be a multigraph.  Then any chain of excisions $\GG\curvearrowright\ee_1\curvearrowright\dots\curvearrowright\ee_r$ gives rise to a chain of flats $\emptyset = F_0\subsetneq \ee_1 \subsetneq \ee_1\cup \ee_2 \subsetneq\dots\subsetneq \ee_1\cup\dots\cup\ee_r$ with $\rank(\ee_1\cup\dots\cup\ee_j)=j$, and any chain of flats $\emptyset=F_0\subsetneq F_1\subsetneq\dots\subsetneq F_r$ with $\rank(F_j)=j$ gives rise to a chain of excisions $\GG\curvearrowright F_1\curvearrowright F_2\setminus F_1\curvearrowright \dots\curvearrowright F_r\setminus F_{r-1}$.
\end{corollary}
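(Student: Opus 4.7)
The plan is to prove both directions by induction on $r$, using \cref{lem:excisionAndFlat} as the one-step engine. The base case $r=1$ is immediate: a single excision $\GG\curvearrowright\ee_1$ requires $\ee_1\in\EE(\GG)$, which is by definition a rank-$1$ flat, and conversely any rank-$1$ flat of $\GG$ is a multiedge and so determines a valid excision.

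For the inductive step, I would first upgrade \cref{lem:excisionAndFlat} to the following natural bijection, which is implicit in its proof: the map $\ee'\mapsto \ee\cup\ee'$ gives a rank-shifting bijection between flats of $\GG\curvearrowright\ee$ and flats of $\GG$ containing $\ee$, where the rank in $\GG$ is one more than the rank in $\GG\curvearrowright\ee$. This follows by inspecting \cref{def:excision}: contracting the two endpoints $v_1,v_2$ of $\ee$ into $v_{12}$ identifies the edges of the two graphs away from $\ee$, and so flats of $\GG\curvearrowright\ee$ are precisely the sets $F\setminus\ee$ for $F$ a flat of $\GG$ containing $\ee$, with rank differing by $\rank(\ee)=1$.

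Now assume the claim for chains of length $r-1$. Given a chain of excisions $\GG\curvearrowright\ee_1\curvearrowright\dots\curvearrowright\ee_r$, the induction hypothesis applied to $\GG$ produces a chain of flats $\emptyset\subsetneq\ee_1\subsetneq\dots\subsetneq \ee_1\cup\dots\cup\ee_{r-1}$ with $\rank(\ee_1\cup\dots\cup\ee_j)=j$ in $\GG$. Since $\ee_r$ is a multiedge in $\GG\curvearrowright\ee_1\curvearrowright\dots\curvearrowright\ee_{r-1}$, i.e.\ a rank-$1$ flat there, the rank-shifting bijection above (iterated $r-1$ times, or equivalently applied to $\GG$ versus $\GG\curvearrowright\ee_1\curvearrowright\dots\curvearrowright\ee_{r-1}$ along the flat $\ee_1\cup\dots\cup\ee_{r-1}$) produces a flat of $\GG$ containing $\ee_1\cup\dots\cup\ee_{r-1}$ of rank $r$, which is exactly $\ee_1\cup\dots\cup\ee_r$. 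Conversely, given a chain of flats $F_0\subsetneq\dots\subsetneq F_r$ with $\rank(F_j)=j$, define $\ee_j\coloneqq F_j\setminus F_{j-1}$; by the bijection each $\ee_j$ is a multiedge (rank-$1$ flat) of $\GG\curvearrowright\ee_1\curvearrowright\dots\curvearrowright\ee_{j-1}$, so the successive excisions are well-defined and yield the desired chain.

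The only nontrivial point, and hence the main obstacle, is the iterated rank-shifting bijection between flats of successive excisions and flats of $\GG$. Once this is in hand, the inductive argument is mechanical. I would therefore spend the bulk of the written proof explicitly verifying this generalisation of \cref{lem:excisionAndFlat}, checking that the vertex-identification in \cref{def:excision} preserves the edge-set structure of flats (i.e.\ that connected components remain vertex-induced) and that the rank drops by exactly one, so that the inductive counting of ranks along the chain is consistent.
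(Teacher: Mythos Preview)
Your proposal is correct and essentially the same approach as the paper's: the paper states this corollary as an immediate consequence of applying \cref{lem:excisionAndFlat} iteratively, and your induction on $r$ is precisely that iteration written out. Your observation that one implicitly needs the rank-shifting correspondence between flats of $\GG\curvearrowright\ee$ and flats of $\GG$ containing $\ee$ (not just the rank-$1$/rank-$2$ case literally stated in \cref{lem:excisionAndFlat}) is a fair elaboration of what ``iteratively'' entails, and is indeed implicit in the proof of that lemma.
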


\begin{corollary}
  \label{cor:excisionsAndChains2}
  Let $\GG$ be a multigraph, and fix a chain of excisions $\HH\coloneqq \GG\curvearrowright\ee_1\curvearrowright\dots\curvearrowright\ee_r$ and its corresponding chain of flats $F_\bullet\colon \emptyset=F_0\subsetneq F_1\subsetneq\dots\subsetneq F_r$ from \cref{cor:excisionsAndChains}.  Then any chain of flats $F_\bullet'$ on $\HH$ beginning with $F_{i}'=\ee_i$ is a chain of flats on $\GG$ extending $F_\bullet$, and vice versa.
\end{corollary}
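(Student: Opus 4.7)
The plan is to prove this by induction on $r$, reducing matters to the case $r=1$. The main statement will then follow from the stronger fact that the identity on edge sets (noting $E(\GG)=E(\HH)$) restricts to an inclusion-preserving bijection between flats of $\GG$ containing $\ee_1\cup\dots\cup\ee_r$ and flats of $\HH$ containing the same union. Since chains of flats are simply nested sequences of flats, such a bijection immediately induces the claimed correspondence between chains on $\HH$ beginning with the image of $F_\bullet$ and chains on $\GG$ extending $F_\bullet$.

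For the $r=1$ case, let $\GG'\coloneqq \GG\curvearrowright \ee$, let $v_1,v_2\in V(\GG)$ be the endpoints of $\ee$, and let $v_{12}$ be the added vertex. Given any $F\subseteq E(\GG)=E(\GG')$ with $\ee\subseteq F$, I would compare the connected components of the subgraph induced by $F$ in $\GG$ versus in $\GG'$. The unique component $W_1$ of $F$ in $\GG$ containing $v_1,v_2$ splits, under excision, into the isolated multiedge $\ee$ on $\{v_1,v_2\}$ together with a component supported on $(W_1\setminus\{v_1,v_2\})\cup\{v_{12}\}$, whilst all components of $F$ on $V(\GG)\setminus W_1$ remain the same in both graphs. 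One then verifies that $W_1$ is vertex-induced in $\GG$ if and only if both resulting components in $\GG'$ are vertex-induced: the isolated multiedge component is automatically vertex-induced because $F$ contains $\ee$, which is the full set of edges between $v_1$ and $v_2$ in $\GG'$; the rerouted component is vertex-induced in $\GG'$ exactly when every $\GG$-edge between two vertices of $W_1$ lies in $F$, thanks to the definition of excision.

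The main obstacle I anticipate is the careful bookkeeping in the paragraph above, since excision treats edges incident to $v_1,v_2$ differently depending on whether they lie in $\ee$, and the added vertex $v_{12}$ has to be related back to both $v_1$ and $v_2$ simultaneously. Once the $r=1$ statement is verified, the reduction to the general statement is purely formal: I would apply the claim successively to $\GG\curvearrowright\ee_1$ with excision along $\ee_2$, then to $\GG\curvearrowright\ee_1\curvearrowright\ee_2$ along $\ee_3$, and so on, using that the flat $\ee_1\cup\dots\cup\ee_i$ is contained in all subsequent flats appearing in the chain. Composing the resulting inclusion-preserving bijections yields the desired bijection between chains of flats on $\HH$ starting with $\ee_1, \ee_1\cup\ee_2, \dots, \ee_1\cup\dots\cup\ee_r$ and chains of flats on $\GG$ extending $F_\bullet$.
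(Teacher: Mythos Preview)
Your proposal is correct. The paper gives no proof at all for this corollary, treating it as immediate from \cref{cor:excisionsAndChains}, so you have supplied the argument the authors omit. Your key lemma---that the identity on edge sets yields an inclusion-preserving bijection between flats of $\GG$ containing $\ee$ and flats of $\GG\curvearrowright\ee$ containing $\ee$---is the right statement to isolate, and your verification via tracking how the connected component $W_1$ containing $v_1,v_2$ splits under excision is sound. The inductive composition for general $r$ is then routine.

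One small point of care: the chain on $\HH$ in the statement begins with the \emph{cumulative} flats $\ee_1\subsetneq\ee_1\cup\ee_2\subsetneq\cdots\subsetneq\ee_1\cup\cdots\cup\ee_r$ (as the usage in \cref{lem:excisionToStar} makes clear), so your bijection is only needed for flats containing the full union $\ee_1\cup\cdots\cup\ee_r$; the initial segment is already handled by \cref{cor:excisionsAndChains}. Your write-up already reflects this, but it is worth stating explicitly.
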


\begin{lemma}
  \label{lem:excisionToStar}
  Let $\GG$ be a multigraph and let $\HH\coloneqq \GG\curvearrowright\ee_1\curvearrowright\dots\curvearrowright\ee_r$ arise from a chain of excisions.  Let $\GG_1,\dots,\GG_s$ be the connected components of $\GG$.  By \cref{lem:excisionsCommuting}, we may reorder $\ee_1,\dots,\ee_r$ so that $\EE(\GG_k)\supseteq \{\ee_{j_{k-1}+1},\dots,\ee_{j_k}\}$ for $0=j_0<\dots<j_s=r$. Then the following is a valid cone in $\Trop(\GG)$:
  \begin{align*}
    \sigma \coloneqq &\RR_{\geq 0}\cdot \bbone_{\ee_1}+\RR_{\geq 0}\cdot \bbone_{\ee_1\cup\ee_2}+\dots+\RR_{\geq 0}\cdot \bbone_{\ee_1\cup\dots\cup\ee_{j_1}}\\
    &\quad +\RR_{\geq 0}\cdot \bbone_{\ee_{j_1+1}}+\RR_{\geq 0}\cdot \bbone_{\ee_{j_1+1}\cup\ee_{j_1+2}}+\dots+\RR_{\geq 0}\cdot \bbone_{\ee_{j_1+1}\cup\dots\cup\ee_{j_2}}\\
    &\hspace{4.75mm} \vdots\\
    &\quad +\RR_{\geq 0}\cdot \bbone_{\ee_{j_{s-1}+1}}+\RR_{\geq 0}\cdot \bbone_{\ee_{j_{s-1}+1}\cup\ee_{j_{s-1}+2}}+\dots+\RR_{\geq 0}\cdot \bbone_{\ee_{j_{s-1}+1}\cup\dots\cup\ee_{j_s}}\\
    &\quad +\RR\cdot\bbone_{[m]}\in \Trop(\GG).
  \end{align*}
  Moreover, $\Trop(\HH)=\Star_{\Trop(\GG)}(\sigma)$.
\end{lemma}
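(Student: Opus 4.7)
The plan is to handle the two claims of the lemma separately.

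For the first claim ($\sigma \in \Trop(\GG)$), I would apply \cref{cor:excisionsAndChains} to each connected component $\GG_k$ with its sub-chain of excisions $\GG_k \curvearrowright \ee_{j_{k-1}+1} \curvearrowright \dots \curvearrowright \ee_{j_k}$, where the commutativity provided by \cref{lem:excisionsCommuting} legitimises the componentwise grouping. This produces a chain of proper flats $F_{\GG_k,\bullet}$ on $\GG_k$ whose $i$-th entry has rank $i$. By \cref{def:tropicalization}, each $F_{\GG_k,\bullet}$ determines a Bergman cone $\sigma(F_{\GG_k,\bullet}) \in \Trop(M_{\GG_k})$, and the product $\prod_k \sigma(F_{\GG_k,\bullet})$ is exactly the cone $\sigma$ as stated (absorbing the lineality contributions $\RR \cdot \bbone_{E(\GG_k)}$ from the different factors into the expression containing $\RR \cdot \bbone_{[m]}$), hence lies in $\Trop(\GG)$.

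For the second claim, I would set up a cone-by-cone bijection between $\Trop(\HH)$ and $\Star_{\Trop(\GG)}(\sigma)$ and then verify that matched cones agree as subsets of $\RR^m$ with equal multiplicities. Cones of $\Trop(\HH)$ correspond via \cref{lem:tropicalization} to tuples of chains of flats on the connected components of $\HH$; since each excised multiedge $\ee_j$ is an isolated component of $\HH$ contributing only the trivial chain, all non-trivial data lives on the remaining non-multiedge components $\HH_k$. On the other hand, cones of $\Star_{\Trop(\GG)}(\sigma)$ correspond to cones $\tau \supseteq \sigma$ in $\Trop(\GG)$, i.e., componentwise extensions of the chains from the first part. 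Componentwise application of \cref{cor:excisionsAndChains2} then produces the desired bijection.

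To conclude, I would check that matched cones coincide as subsets of $\RR^m$. Since $w \in \Relint(\sigma)$, we have $\Star_\tau(w) = \tau + \Span(\sigma)$, generated by $\Span(\sigma)$ together with the rays $\bbone_F$ for new flats $F$ of $\GG_k$ strictly containing $\ee_{j_{k-1}+1} \cup \dots \cup \ee_{j_k}$. Iterating \cref{lem:excisionAndFlat} along the chain of excisions, such $F$ are in bijection with flats $F'$ of $\HH_k$ via $F = F' \sqcup \ee_{j_{k-1}+1} \sqcup \dots \sqcup \ee_{j_k}$, so that $\bbone_F - \bbone_{F'} = \bbone_{\ee_{j_{k-1}+1}} + \dots + \bbone_{\ee_{j_k}} \in \Span(\sigma)$. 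Hence the ray directions agree modulo $\Span(\sigma)$, the lineality spaces agree outright, and all multiplicities are $1$ on both sides by \cref{def:tropicalization}. The main technical hurdle is this last bookkeeping step: tracking how flats of $\HH$ correspond, via iterated excision, to flats of $\GG$ containing a prescribed union of excision multiedges, and checking that the induced indicator-vector shift lies in $\Span(\sigma)$. This reduces to the single-excision correspondence of \cref{lem:excisionAndFlat}, organised into a chain via \cref{cor:excisionsAndChains2}.
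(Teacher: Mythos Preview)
Your proposal is correct and follows the same overall strategy as the paper: both reduce the first claim to \cref{cor:excisionsAndChains}, and both establish the second claim by setting up a bijection between maximal cones of $\Trop(\HH)$ and maximal cones of $\Star_{\Trop(\GG)}(\sigma)$ via \cref{cor:excisionsAndChains2}.

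The one genuine difference lies in how the equality of matched cones is verified. The paper fixes $v\in\Relint(\sigma)$ explicitly (e.g.\ $v=\bbone_{\ee_1}+\dots+\bbone_{\ee_1\cup\dots\cup\ee_r}$ in the connected case) and checks, by a coordinate-wise case analysis grounded in \cref{lem:bergmanConeToChainOfFlats}, that $v+\varepsilon u$ induces the correct chain of flats on $\GG$ if and only if $u$ lies in the corresponding cone of $\Trop(\HH)$. You instead invoke the general identity $\Star_\tau(w)=\tau+\Span(\sigma)$ for $w\in\Relint(\sigma)\subseteq\tau$ and then compare ray generators: since $\bbone_{E\cup F'}-\bbone_{F'}\in\Span(\sigma)$, the rays agree modulo lineality, and the lineality spaces match because $\Span(\bbone_{\ee_1},\dots,\bbone_{\ee_r},\bbone_{[m]})=\Span(\sigma)$. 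Your route is more structural and avoids the case split; the paper's is more explicit and self-contained (it does not need the identity $\Star_\tau(w)=\tau+\Span(\sigma)$, which, while standard, is not stated anywhere in the paper). Both are equally rigorous once the bookkeeping is done.

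One small remark on your first paragraph: the phrase ``absorbing the lineality contributions $\RR\cdot\bbone_{E(\GG_k)}$ into $\RR\cdot\bbone_{[m]}$'' is slightly off, since for $s>1$ these spans are genuinely larger than $\RR\cdot\bbone_{[m]}$. The paper sidesteps this by assuming without loss of generality that $\GG$ is connected; you could do the same, or simply note that the displayed $\sigma$ in the lemma is tacitly understood to carry the full lineality $\sum_k\RR\cdot\bbone_{E(\GG_k)}$ of $\Trop(\GG)$.
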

\begin{proof}
  We may assume without loss of generality that $\GG$ is connected, which means that the only connected components of $\HH$ are $\HH_1,\dots,\HH_r$ with $E(\HH_j)=\ee_j$, and a component $\HH'\subseteq\HH$ with $E(\HH')=E(\GG)\setminus E$ for $E\coloneqq \ee_1\cup\dots\cup\ee_r$.  By \cref{cor:excisionsAndChains}, the following is a valid chain of flats on $\GG$:
  \begin{equation*}
    F_{E,\bullet}\colon\;\; \emptyset\subsetneq \ee_1\subsetneq \ee_1\cup\ee_2\subsetneq\dots\subsetneq \ee_1\cup\dots\cup\ee_r \subseteq E.
  \end{equation*}
  Hence $\sigma=\sigma(F_{E,\bullet})\in\Trop(\GG)$.  It remains to show that $\Trop(\HH)=\Star_{\Trop(\GG)}(\sigma)$.
  
  For the ``$\subseteq$'' inclusion, let $\tau\in\Trop(\HH)$ be a maximal cone.  By definition, $\tau$ arises from a maximal chain of flats on each connected component of $\HH$, which we can rearrange to become a single maximal chain of flats on $\HH$ beginning with the $\ee_i$:
  \begin{equation*}
    F_{[m],\bullet}\colon\;\; \emptyset\subsetneq \ee_1\subsetneq \dots\subsetneq \ee_1\cup\dots\cup\ee_r\subsetneq E\cup F_1\subsetneq E\cup F_2\subsetneq\dots\subsetneq E\cup F_\ell\subsetneq [m].
  \end{equation*}

  By \cref{cor:excisionsAndChains2}, this is also a valid maximal chain of flats on $\GG$ which corresponds to a maximal cone $\sigma(F_{[m],\bullet})\in\Trop(\GG)$.  We now claim that $\tau=\Star_{\sigma(F_{[m],\bullet})}(v)$ for any $v\in\mathrm{Relint}(\sigma)$, say $v = \bbone_{\ee_1}+\dots+\bbone_{\ee_1\cup\dots\cup\ee_r}$.

  Let $u=(u_i)_{i\in[m]}\in \mathrm{Relint}(\tau)$, i.e., $(u_i)_{i\in\ee_j}$ induces the chains $\emptyset\subsetneq \ee_j$ on the $\HH_j$ and $(u_{i})_{i\notin E}$ induces the chain $\emptyset\subsetneq F_1\subsetneq \dots\subsetneq F_\ell$ on $\HH'$.  Thus we have
  \begin{enumerate}
  \item $u_{i_1}=u_{i_2}$ if $i_1,i_2\in\ee_j$ for some $j=1,\dots,r$,
  \item $u_{i_1}\geq u_{i_2}$ if $i_1\in F_{j_1}\setminus F_{j_1-1}$ and $i_2\in F_{j_2}\setminus F_{j_2-1}$ with $j_1\leq j_2$.
  \end{enumerate}
  We now show that for $\varepsilon>0$ sufficiently small, $w\coloneqq v+\varepsilon\cdot u$ induces $F_{[m],\bullet}$ on $\GG$.
  For that let $w_{i_1}$ and $w_{i_2}$ be two coordinates of $w$ for $i_1, i_2\in [m]$, $i_1\neq i_2$.  We now consider three cases:

  \noindent
  If $i_1,i_2\in E$, say $i_1\in \ee_{j_1}$ and $i_2\in \ee_{j_2}$ with $j_1\leq j_2$, then we have
  \begin{equation*}
    w_{i_1}=v_{i_1}+ \varepsilon\cdot u_{i_1} = r+1-j_1+ \varepsilon\cdot u_{i_1} \geq r+1-j_2 + \varepsilon\cdot u_{i_2}= v_{i_1}+\varepsilon\cdot u_{i_2}=w_{i_2}.
  \end{equation*}
  If $i_1\in E$, say $i_1\in\ee_j$, and $i_2\notin E$, then we have
  \begin{equation*}
    w_{i_1}=r+1-j + \varepsilon\cdot u_{i_1} > \varepsilon\cdot u_{i_2} = w_{i_2}.
  \end{equation*}
  If $i_1,i_2\notin E$, say $i_1\in F_{j_1} \setminus F_{j_1-1}$ and $i_2\in F_{j_2} \setminus F_{j_2-1}$ for some $j_1\leq j_2$, then we have
  \begin{equation*}
    w_{i_1}=\varepsilon\cdot u_{i_1} \geq \varepsilon\cdot u_{i_2}=w_{i_2}.
  \end{equation*}

  Conversely, let $u\in\Star_{\sigma(F_{[m],\bullet})}(v)$, i.e., $w=v+\varepsilon\cdot u$ induces $F_{[m],\bullet}$ on $\GG$.  Then we have
  \begin{enumerate}
  \item \label{enumitem:relevantCondition1}$w_{i_1}=w_{i_2}$ for all $i_1,i_2\in \ee_j$ and all $j=1,\dots,r$,
  \item $w_{i_1}\geq w_{i_2}$ if either
    \begin{enumerate}
    \item $i_1\in \ee_{j_1}$ and $i_2\in\ee_{j_2}$ with $j_1\leq j_2$,
    \item $i_1\in E$ and $i_2\notin E$,
    \item \label{enumitem:relevantCondition2c} $i_1\in F_{j_1}\setminus F_{j_1-1}$ and $i_2\in F_{j_2}\setminus F_{j_2-1}$ with $j_1\leq j_2$.
    \end{enumerate}
  \end{enumerate}
  By Condition~\eqref{enumitem:relevantCondition1}, the $(u_i)_{i\in\ee_j}$ induce the chains $\emptyset\subsetneq \ee_j$ on $\HH_j$ and, by Condition \eqref{enumitem:relevantCondition2c}, $(u_i)_{i\notin E}$ induces the chain $\emptyset \subsetneq F_1 \subsetneq F_2\subsetneq \dots\subsetneq F_\ell$ on $\HH'$.  Hence $u\in\tau$.

  The ``$\supseteq$'' inclusion follows similarly.  Consider a relative interior point $v\in\mathrm{Relint}(\sigma)$, say $v = \bbone_{\ee_1}+\dots+\bbone_{\ee_1\cup\dots\cup\ee_r}$, and let $\Star_{\sigma(F_{[m],\bullet})}(v)\in \Star_{\Trop(\GG)}(\sigma)$ be a maximal cone that arises from a maximal chain of flats $F_{[m],\bullet}$ extending $F_{E,\bullet}$,
  \begin{equation*}
    F_{[m],\bullet}\colon\;\; \emptyset\subsetneq \ee_1\subsetneq \dots\subsetneq \ee_1\cup\dots\cup\ee_r\subsetneq E\cup F_1\subsetneq E\cup F_2\subsetneq\dots\subsetneq E\cup F_\ell\subsetneq [m].
  \end{equation*}

  By \cref{cor:excisionsAndChains2}, this is also a valid maximal chain of flats on $\HH$ which we can restrict to maximal chains on the connected components of $\HH$: $F_{\HH_j,\bullet}$ on the $\HH_j$ for $j=1,\dots,r$ and $F_{\HH',\bullet}$ on $\HH'$.  This gives rise to a maximal cone $\tau\coloneqq \sigma((F_{\HH_1,\bullet},\dots,F_{\HH_r,\bullet},F_{\HH',\bullet}))\in\Trop(\HH)$.  We now claim that $\tau=\Star_{\sigma(F_{[m],\bullet})}(v)$.  This can be shown using the same steps as before.
\end{proof}

\begin{lemma}
  \label{lem:starToExcision}
  Let $\GG$ be a multigraph and let $\sigma\in\Trop(\GG)$ be a cone in the tropicalization arising from the chains of flats $(F_{\GG',\bullet})_{\GG'\subseteq\GG \text{ conn. comp.}}$.  Pick any order on the connected components $\{\GG'\subseteq\GG \mid \GG' \text{ conn. comp.}\} = \{\GG'_1,\dots,\GG'_s\}$ and let $r_j\coloneqq\mathrm{length}(F_{\GG'_j,\bullet})$.  Then the following is a valid chain of multiedge excisions of $\GG$:
  \begin{align*}
    \HH\coloneqq \GG &\curvearrowright F_{\GG'_1,1} \curvearrowright F_{\GG'_1,2}\setminus F_{\GG'_1,1} \curvearrowright F_{\GG'_1,r_1}\setminus F_{\GG'_1,r_1-1}\\
                     &\curvearrowright F_{\GG'_2,1} \curvearrowright F_{\GG'_2,2}\setminus F_{\GG'_2,1} \curvearrowright F_{\GG'_2,r_2}\setminus F_{\GG'_2,r_2-1}\\
    &\curvearrowright \dots\curvearrowright \\
                     &\curvearrowright F_{\GG'_k,1} \curvearrowright F_{\GG'_k,2}\setminus F_{\GG'_k,1} \curvearrowright F_{\GG'_k,r_k}\setminus F_{\GG'_1,r_k-1}.
  \end{align*}
  Moreover, $\Trop(\HH)=\Star_{\Trop(\GG)}(\sigma)$.
\end{lemma}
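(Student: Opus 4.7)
The plan is to reduce this to \cref{lem:excisionToStar} by building, from the chains of flats $(F_{\GG'_j,\bullet})_{j=1,\dots,s}$, a chain of multiedge excisions on $\GG$ whose induced cone (in the sense of \cref{lem:excisionToStar}) is precisely $\sigma$.

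First, I would verify that the proposed sequence of excisions is well-defined. Using \cref{cor:excisionsAndChains}, a chain of flats $\emptyset\subsetneq F_{\GG'_j,1}\subsetneq\dots\subsetneq F_{\GG'_j,r_j}$ of increasing rank on the connected component $\GG'_j$ gives rise to a chain of multiedge excisions
\[
  \GG'_j\curvearrowright F_{\GG'_j,1}\curvearrowright F_{\GG'_j,2}\setminus F_{\GG'_j,1}\curvearrowright\dots\curvearrowright F_{\GG'_j,r_j}\setminus F_{\GG'_j,r_j-1},
\]
and since these excisions only involve edges of $\GG'_j$, they lift to excisions on all of $\GG$. By \cref{lem:excisionsCommuting}, excisions involving different connected components commute, so concatenating the excision chains for $\GG'_1,\dots,\GG'_s$ in the chosen order yields a well-defined multigraph $\HH$, independent of how the commuting pieces are interleaved.

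Second, I would read off the cone associated to this chain of excisions using \cref{lem:excisionToStar}. By construction, the excisions are already grouped by connected component of $\GG$, so the cone $\sigma'\in\Trop(\GG)$ produced by \cref{lem:excisionToStar} has generators
\[
  \bbone_{F_{\GG'_j,1}},\ \bbone_{F_{\GG'_j,2}},\ \dots,\ \bbone_{F_{\GG'_j,r_j}} \quad\text{for } j=1,\dots,s,
\]
together with the lineality direction $\bbone_{[m]}$, since $F_{\GG'_j,i}$ equals the union $\ee_{j,1}\cup\dots\cup\ee_{j,i}$ of the excised multiedges by telescoping. But these are exactly the generators of the cone $\sigma(F_{\GG'_j,\bullet})$ summed over $j$, i.e., of $\sigma$ itself. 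Hence $\sigma'=\sigma$.

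Finally, I would apply \cref{lem:excisionToStar} directly to conclude that $\Trop(\HH)=\Star_{\Trop(\GG)}(\sigma')=\Star_{\Trop(\GG)}(\sigma)$. I do not anticipate a serious obstacle here: the content of this lemma is essentially a repackaging of \cref{lem:excisionToStar} via the flat-to-excision correspondence of \cref{cor:excisionsAndChains}, with \cref{lem:excisionsCommuting} handling the fact that the order of connected components was chosen arbitrarily. The only mild bookkeeping issue is to confirm that regrouping the concatenated excision chain by connected component of $\GG$ matches the indexing of \cref{lem:excisionToStar}; this is immediate from the construction since we never mix multiedges across components.
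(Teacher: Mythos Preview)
Your proposal is correct and follows essentially the same approach as the paper: validity of the excision chain via \cref{cor:excisionsAndChains} (applied component-wise, with \cref{lem:excisionsCommuting} justifying the concatenation), and then the identification $\Trop(\HH)=\Star_{\Trop(\GG)}(\sigma)$ via \cref{lem:excisionToStar}. The paper phrases the second step as ``similar to \cref{lem:excisionToStar}'' and points to \cref{cor:excisionsAndChains2} for the underlying bijection of chains, whereas you invoke \cref{lem:excisionToStar} as a black box after checking that the cone it produces is $\sigma$; this is a slightly cleaner packaging of the same argument.
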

\begin{proof}
  The proof is similar to that of \cref{lem:excisionToStar}.  The fact that that $\HH$ arises from a valid chain of excisions of $\GG$ follows from \cref{cor:excisionsAndChains}.  And the bijection between the cones of $\Trop(\HH)$ and the cones of $\Star_{\Trop(\GG)}(\sigma)$ stems from the bijection between the chains of flats in \cref{cor:excisionsAndChains2}.
\end{proof}

We conclude this section by introducing two important types of excised graphs.

\begin{definition}
  \label{def:excisionTriangle}
  Let $G$ be a Laman graph and let $\HH$ arise from a chain of excisions,
  \begin{equation*}
    \HH\coloneqq G\curvearrowright\ee_1\curvearrowright\ee_2\dots\curvearrowright\ee_r.
  \end{equation*}
  We say that $\HH$ is \emph{excised fully}, if it is a disjoint union of multiedges.  And we say that $\HH$ is an \emph{excised triangle}, if it is a disjoint union of multiedges and a single multitriangle.
\end{definition}

\begin{lemma}
  \label{lem:excisedTriangles}
  All Laman graphs $G$ with at least $3$ vertices have excised triangles $\HH$.
\end{lemma}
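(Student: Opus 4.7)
The plan is to reformulate the existence of an excised triangle in terms of flats of the graphic matroid $M_G$, and then induct on $n = |V(G)|$ via \cref{thm:henneberg}.

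First I would reformulate. By \cref{def:excision}, each excision of a connected component on $k$ vertices produces an isolated multiedge on $2$ vertices and a rest on $k - 1$ vertices; iterating, after $n - 3$ excisions of a connected $G$ the multigraph $\HH$ consists of exactly $n - 3$ isolated multiedges plus a rest on $3$ vertices, and $\HH$ is an excised triangle iff this rest is a multitriangle. By \cref{cor:excisionsAndChains}, such an excision chain corresponds to a chain of flats of ranks $1, 2, \dots, n - 3$, and its top flat $F \subsetneq E(G)$ corresponds to a partition $V(G) = V_1 \sqcup V_2 \sqcup V_3$ with each $V_i$ connected in $G$ and $F = \bigcup_i E_G(V_i)$; the three super-vertices of the rest are precisely the $V_i$, with edges equal to the $G$-edges between distinct parts. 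Thus the lemma reduces to showing that every Laman graph on $n \geq 3$ vertices admits a partition $V(G) = V_1 \sqcup V_2 \sqcup V_3$ such that
\begin{enumerate}
  \item[(a)] each $V_i$ induces a connected subgraph of $G$, and
  \item[(b)] each pair $(V_i, V_j)$ has at least one $G$-edge between them.
\end{enumerate}

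Next I would induct on $n$. The base case $n = 3$ is trivial: $G$ is a triangle and the three-singleton partition satisfies (a), (b). For $n \geq 4$, by \cref{thm:henneberg} we may write $G$ as arising from a Laman graph $G'$ on $n - 1$ vertices by a Henneberg-$0$ or Henneberg-$1$ move that adds a new vertex $v$. By induction, $V(G')$ admits a partition $V_1', V_2', V_3'$ satisfying (a), (b) for $G'$. In the Henneberg-$1$ case let $v_i$ denote one of the endpoints of the edge deleted from $G'$, and in the Henneberg-$0$ case let $v_i$ denote an arbitrary neighbor of $v$ in $G$. I would extend the partition by placing $v$ into the part $V_a'$ containing $v_i$, obtaining a partition $V_1, V_2, V_3$ of $V(G)$.

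The main obstacle lies in the Henneberg-$1$ case. Verifying (a) for the enlarged part $V_a' \cup \{v\}$ is routine: the edge $\{v, v_i\}$ attaches $v$ to $V_a'$, and if the removed edge $\{v_i, v_j\}$ disconnects $V_a'$ (which can only happen when $v_j \in V_a'$), then $v$ reconnects the two pieces via $\{v, v_i\}$ and $\{v, v_j\}$. The delicate point is condition (b) when $v_j \in V_b'$ with $b \neq a$: the deletion of $\{v_i, v_j\}$ removes an edge from the $(V_a, V_b)$ pair, and this would violate (b) were the deleted edge the unique $(V_a', V_b')$ edge. The crucial observation is that placing $v$ into $V_a$ makes the new edge $\{v, v_j\}$ an edge between $V_a$ and $V_b$, exactly compensating for the lost edge; all other pair-edges of $G'$ persist in $G$, and any further new edge $\{v, v_k\}$ can only strengthen (b). This completes the induction, and by \cref{cor:excisionsAndChains} combined with \cref{lem:excisionToStar} yields the desired excised triangle of $G$.
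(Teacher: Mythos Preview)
Your argument is correct. Both you and the paper run the induction through Henneberg's theorem, but you carry different data through it. The paper keeps the excision chain itself: given an excised triangle $\HH' = G' \curvearrowright \ee_1 \curvearrowright \dots \curvearrowright \ee_r$, it applies the same excisions to $G$ to obtain a supergraph $\HH''$ of $\HH'$ and then performs a five-case analysis on where the new edges land inside $\HH''$ (on one isolated multiedge, on two distinct ones, on the multitriangle, forming a multiedge, etc.), appending one or two further excisions in each case. You instead translate ``has an excised triangle'' into ``admits a partition $V_1 \sqcup V_2 \sqcup V_3$ into connected pieces with an edge between every pair'', and then simply absorb the new Henneberg vertex $v$ into the part containing an endpoint of the deleted edge (for $H_1$) or any neighbour (for $H_0$). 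This collapses the paper's five cases into a single uniform step and handles the $H_1$ edge deletion transparently, whereas the paper's phrasing in terms of ``the two added edges $e_{m+1},e_{m+2}$'' is literally the $H_0$ picture and leaves the deleted edge of $H_1$ implicit. The only cost of your route is justifying the equivalence between excised triangles and such $3$-partitions; you sketch this correctly via \cref{cor:excisionsAndChains}, which already supplies both directions, so your closing appeal to \cref{lem:excisionToStar} is not needed.
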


\begin{proof}
  Using \cref{thm:henneberg}, we may perform an induction on the Henneberg moves.
  Clearly, the Laman graph on $3$ edges is already an excised triangle. Let $G'$ be a Laman graph with an excised triangle $\HH'$:
  \begin{equation*}
    \HH'=G'\curvearrowright\ee_1\curvearrowright\dots\curvearrowright\ee_r
  \end{equation*}
  Now let $G$ be obtained from $G'$ via a Henneberg move, and denote the added edges by $e_{m+1}, e_{m+2}$.  Note that $\ee_1,\dots,\ee_r$ remains a valid chain of excisions, and that $\HH'$ is a subgraph of $\HH''\coloneqq G\curvearrowright\ee_1\curvearrowright\dots\curvearrowright\ee_r$.  We now distinguish between five cases, see \cref{fig:DisjointEdgesMultiTriangle}:
  \begin{enumerate}
  \item $e_{m+1}$ and $e_{m+2}$ are attached to the two vertices of a single isolated multiedge of $\HH'$,
  \item $e_{m+1}$ and $e_{m+2}$ are attached to two distinct isolated multiedges of $\HH'$
  \item $e_{m+1}$ and $e_{m+2}$ are attached to an isolated multiedge and the multitriangle of $\HH'$, respectively,
  \item $e_{m+1}$ and $e_{m+2}$ are attached to two distinct vertices in the multitriangle of $\HH'$,
  \item $e_{m+1}$ and $e_{m+2}$ form a multiedge in $\HH''$.
  \end{enumerate}
  In Cases (1) and (4), $\HH\coloneqq\HH''\curvearrowright e_{m+1}$ yields an excised triangle.
  In Case (5), $\HH\coloneqq\HH''\curvearrowright \{e_{m+1},e_{m+2}\}$ yields an excised triangle.
  In Cases (2) and (3), let $\ee\in\EE(\HH')$ an isolated multiedge of $\HH'$ that either $e_{m+1}$ or $e_{m+2}$ are connected to.  Then $\HH\coloneqq\HH''\curvearrowright e_{m+1}\curvearrowright e_{m+2}\curvearrowright \ee$ is an excised triangle.
\end{proof}

\begin{figure}[t]
  \centering
  \begin{tikzpicture}
    \node (first)
    {
      \begin{tikzpicture}[x={(0.7,0)},y={(0,0.7)}]
        \node[circle, scale= 0.4, fill](v1) at (0,0){};
        \node [circle, scale= 0.4, fill](v2) at (0,3){};
        \node [circle, scale= 0.4, fill](v3) at (3,0){};
        \node [circle, scale= 0.4, fill](v4) at (3,3){};
        \node [circle, scale= 0.4, fill= Green](v5) at (1.5,2) [label={below, text = Green}: \tiny \circled{$2$}] {};
        \node [circle, scale= 0.4, fill=Green](v6) at (-1,-1) [label={below, text = Green}: \tiny \circled{$1$}] {};
        \node[circle, scale= 0.4, fill](v7) at (5,0){};
        \node[circle, scale= 0.4, fill](v8) at (5,3){};
        \node[circle, scale= 0.4, fill=Green](v9) at (6,-1)[label={below, text = Green}: \tiny \circled{$3$}] {};
        \node[circle, scale= 0.4, fill](v10) at (7,0){};
        \node[circle, scale= 0.4, fill](v11) at (9,0){};
        \node[circle, scale= 0.4, fill](v12) at (8,3){};
        \node[circle, scale= 0.4, fill= Green](v13) at (10,-1)[label={below, text = Green}: \tiny \circled{$5$}] {};
        \node[circle, scale= 0.4  , fill= Green](v14) at (10,2)[label={right, text = Green}: \tiny \circled{$4$}] {};
        \node[draw=none] (ellipsis1) at (4,1.5) {$\cdots$};
        

        \draw (v1)--(v2);
        \draw (v3) --(v4);
        \draw (v2) to [bend left=30] (v5) [Green] node[left,  yshift = 1mm] {\tiny $e_{m+2}$};
        \draw (v4) to [bend right=30] (v5) [Green] node[right,  yshift = 1mm, xshift = 0.5mm] {\tiny $e_{m+1}$};
        \draw (v2) to [bend right = 15] node[above left, color=Green]{\tiny $e_{m+1}$} (v6) [Green];
        \draw  (v1) to [bend right=15] (v6)[Green] node[midway, right, below,yshift = -2mm] {\tiny $e_{m+2}$}; 
        \draw (v7)--(v8);
        \draw (v7) to [bend left=15](v9) [Green] node[left, yshift = 1mm] {\tiny $e_{m+1}$};
        \draw (v10) to [bend right=15](v9) [Green] node[right, yshift = 1mm , xshift = 1mm ] {\tiny $e_{m+2}$};
        \draw (v10)--(v11);
        \draw (v10)--(v12);
        \draw (v11)--(v12);
        \draw (v12) -- (v14) [Green] node[pos = 0.6, above, yshift = 1mm ] {\tiny $e_{m+1}$};
        \draw (v11)--(v14) [Green]node[pos = 0.4, right ] {\tiny $e_{m+2}$};
        \draw (v13) to [bend right = 15] (v11) [Green] node[right, Green, xshift = 3mm, yshift = -2mm] {\tiny $e_{m+1}$};
        \draw (v11) to [bend right=15](v13) [Green] node[left, Green, yshift = 2mm, xshift = -3mm] {\tiny $e_{m+2}$};
      \end{tikzpicture}
    };
  \end{tikzpicture}
  \caption{The excised triangle $\HH'$ and the five cases of $\HH''$\pointOrNoPoint}
  \label{fig:DisjointEdgesMultiTriangle}
\end{figure}
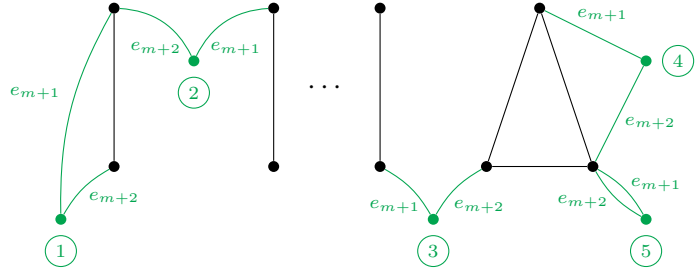


\section{Tropical galaxies of Laman graphs}\label{sec:galaxy}
In this section, we introduce the main object of study for this paper as well as some of its basic properties.

\begin{definition}
  \label{def:tropicalGalaxy}
  Let $G$ be a Laman graph. The \emph{tropical galaxy} of $G$ is a directed acyclic graph $\Gamma_G$ whose vertices are all possible excisions of $G$ and whose edges represent excisions:
  \begin{itemize}
  \item $V(\Gamma_G) = \{ G\curvearrowright \ee_1 \curvearrowright \dots \curvearrowright \ee_r\mid (\ee_1,\dots,\ee_r) \text{ is a valid chain of excisions} \}$, and
  \item $(\HH,\HH')\in E(\Gamma_G)$ if and only if $\HH'=\HH\curvearrowright \ee$ for some multiedge $\ee\in\EE(\HH)$.
  \end{itemize}
  Note that $G\in V(\Gamma_G)$ is the unique source in $\Gamma_G$.
\end{definition}

\begin{definition}
  \label{def:tropicalPairing}
  On the vertices of $\Gamma_G$ we have a \emph{galactic pairing}
  \begin{equation*}
    V(\Gamma_G) \times V(\Gamma_G) \longrightarrow \ZZ_{\geq 0},\qquad (\HH,\HH')\longmapsto \langle\HH,\HH'\rangle\coloneqq \Trop(\HH)\cdot (-\Trop(\HH')).
  \end{equation*}
\end{definition}

Here are some basic properties of the tropical intersection pairing on $\Gamma_G$:
\begin{lemma}
  \label{lem:tropicalPairing}\
  Let $G$ be a Laman graph.  Then the galactic pairing is
  \begin{enumerate}
  \item symmetric, i.e., $\langle\HH,\HH'\rangle = \langle\HH',\HH\rangle$ for all $\HH,\HH'\in V(\Gamma_G)$.
  \item monotonic, i.e., $\langle\HH,\HH'\rangle \geq \langle\HH,\HH''\rangle$ for all $\HH\in V(\Gamma_G)$ and $(\HH',\HH'')\in E(\Gamma_G)$.
  \item trivial on branches, i.e., $\langle\HH,\HH'\rangle = 0$ for all $\HH,\HH'\in V(\Gamma_G)$ if there exists some $\GG\in V(\Gamma_G)\setminus\{G\}$ that is connected to $\HH$ and $\HH'$ via a sequence of edges.
  \item binary on the leaves, i.e., $\langle\HH,\FF\rangle\in\{0,1\}$ for all $\HH,\FF \in V(\Gamma_G)$, $\FF$ leaf.
  \end{enumerate}
\end{lemma}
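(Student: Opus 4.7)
The plan is to handle the four properties in order of increasing difficulty, with (1)--(3) admitting short direct arguments and (4) presenting the main obstacle.

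For \emph{symmetry} (1), I would argue from the commutativity of stable intersection (\cref{def:stableIntersectionMS}) together with the observation that $v \mapsto -v$ is a lattice automorphism of $\RR^m$ preserving dimensions, multiplicities, and the line $\Span(\bbone_{[m]})$ at which the multiplicity of the stable intersection is read off. For \emph{monotonicity} (2), writing $\HH'' = \HH' \curvearrowright \ee$, I would invoke \cref{lem:excisionToStar} to find $\sigma \in \Trop(\HH')$ with $\Trop(\HH'') = \Star_{\Trop(\HH')}(\sigma)$; since negation commutes with taking stars we have $-\Trop(\HH'') = \Star_{-\Trop(\HH')}(-\sigma)$, and applying \cref{lem:intersectionProductAndStar} with $\Sigma = -\Trop(\HH')$ and $\Sigma' = \Trop(\HH)$ then yields the desired inequality.

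For \emph{triviality on branches} (3), the key observation is that a non-root common ancestor $\GG = G \curvearrowright \ee_1 \curvearrowright \dots$ forces $\ee_1$ to be an isolated multiedge in both $\HH$ and $\HH'$, so \cref{cor:tropicalization}(1) places $\bbone_{\ee_1}$ in the lineality of both $\Trop(\HH)$ and $\Trop(\HH')$. Together with $\bbone_{[m]}$ this gives a lineality intersection of dimension at least $2$. Since each excision adds exactly one vertex and one connected component, the matroid rank and the number of edges are preserved, so $\codim\Trop(\HH) + \codim\Trop(\HH') = 2(n-2) = m-1$, and \cref{thm:stableIntersection} then forces the stable intersection to be empty.

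For \emph{binary on leaves} (4), I would first combine symmetry (1) with iterated monotonicity (2) along a directed path from $G$ to $\HH$ in $\Gamma_G$ to reduce the claim to $\langle G, \FF\rangle \leq 1$ for every leaf $\FF$. By \cref{cor:tropicalization}(2), $\Trop(\FF) = L_\FF$ is a single linear space, and by \cref{lem:excisionToStar} applied to the full chain of excisions yielding $\FF$, this $L_\FF$ equals $\Span(\tau_\FF)$ for a specific maximal cone $\tau_\FF \in \Trop(G)$. Since $-L_\FF = L_\FF$, the stable intersection $\Trop(G) \wedge L_\FF$ has codimension $m-1$ and contains $\RR\bbone_{[m]}$, so it is supported on this line. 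The main obstacle is bounding the multiplicity at $\RR\bbone_{[m]}$ by $1$: my plan is to project $\Trop(G)$ to the quotient $\RR^m / L_\FF$ of dimension $n-2$, where the maximal cones $\sigma$ of $\Trop(G)$ satisfying $\sigma \cap L_\FF = \RR\bbone_{[m]}$ project to a top-dimensional balanced polyhedral complex, and then combine balancing with the unimodularity of the Bergman fan of the graphic matroid (whose cone generators are indicator vectors of flats) to pin the generic weight down to $\{0,1\}$; making this last step precise is the expected sticking point.
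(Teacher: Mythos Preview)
Your arguments for (1)--(3) are essentially those of the paper: symmetry via negation, monotonicity via \cref{lem:excisionToStar} and \cref{lem:intersectionProductAndStar}, and triviality on branches via the shared lineality direction $\bbone_{\ee_1}$ combined with \cref{thm:stableIntersection}. These are fine.

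For (4), however, you are working much harder than necessary, and the ``expected sticking point'' you flag is real: controlling the multiplicity by projecting to $\RR^m/L_\FF$ and invoking balancing plus unimodularity is not straightforward to make precise. The paper's argument bypasses this entirely with a one-line observation you have overlooked: since $\FF$ is a leaf, $\Trop(\FF)$ is a single linear space by \cref{cor:tropicalization}\eqref{enumitem:isolatedTriangleAndEdges}, so $-\Trop(\FF)=\Trop(\FF)$. Hence
\[
  \Trop(\HH)\wedge(-\Trop(\FF)) \;=\; \Trop(\HH)\wedge\Trop(\FF),
\]
and this is the stable intersection of two tropical \emph{linear spaces} (Bergman fans of matroids, or products thereof). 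Such a stable intersection is again a tropical linear space, and a tropical linear space of dimension~$1$ is either empty or carries multiplicity~$1$ on its unique maximal cell. This gives $\langle\HH,\FF\rangle\in\{0,1\}$ directly for every $\HH$, with no reduction to $\HH=G$ and no appeal to balancing in a quotient. Your reduction via monotonicity to $\langle G,\FF\rangle$ is correct but buys nothing, since $\Trop(\HH)$ is already a tropical linear space for every $\HH\in V(\Gamma_G)$.
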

\begin{proof}\
  \begin{enumerate}
  \item Follows from $\Trop(\HH)\wedge -\Trop(\HH') = -(-\Trop(\HH) \wedge \Trop(\HH'))$.
  \item Follows from \cref{lem:intersectionProductAndStar} and \cref{lem:excisionToStar}.
  \item Follows from \cref{cor:tropicalization} and \cite[Theorem 3.6.10]{MaclaganSturmfels2015}.
  \item Follows from the fact that $\Trop(\FF)=-\Trop(\FF)$, which makes the stable intersection $\Trop(\HH)\wedge(-\Trop(\FF)) = \Trop(\HH)\wedge \Trop(\FF)$ a tropical linear space. \qedhere
  \end{enumerate}
\end{proof}

Moreover, the pairing takes on the following concrete values for certain special cases:

\begin{lemma}
  \label{lem:chainGalaxy}
  Fix $\GG\in V(\Gamma_G)$ and let $\HH\in V(\Gamma_G)$ be an excised chain of multiedges.  Then either
  \begin{equation*}
    \langle \GG,\HH'\rangle = 0 \qquad \text{for all descendants } \HH' \text{ of } \HH
  \end{equation*}
  or
  \begin{equation*}
    \langle \GG,\HH'\rangle = 1 \qquad \text{for all descendants } \HH' \text{ of } \HH
  \end{equation*}
\end{lemma}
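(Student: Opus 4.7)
The plan is to show that for every descendant $\HH'$ of $\HH$ in $\Gamma_G$, the tropicalization $\Trop(\HH')$ agrees, as a balanced polyhedral cycle, with a fixed tropicalization $\Trop(\FF^*)$, where $\FF^*$ is a common fully excised descendant of $\HH$. From this equality, stable intersection yields $\langle\GG,\HH'\rangle=\langle\GG,\FF^*\rangle$ for every $\HH'$, and \cref{lem:tropicalPairing}(4) forces the common value into $\{0,1\}$, giving the stated dichotomy.

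First I would observe that, because a chain of multiedges is acyclic, every excision preserves the partition of $E(G)$ into multiedges: interior excisions merge two vertices of the chain but produce no new parallel pairs, and endpoint excisions simply detach an isolated multiedge. Iterating, all descendants $\HH'$ of $\HH$ share the same multiedge partition of $E(G)$, and therefore share the same fully excised form $\FF^*$, realizing this partition as a disjoint union of isolated multiedges.

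Next I would establish $\Trop(\HH')=\Trop(\FF^*)$ as tropical cycles. Let $\ee_1,\ldots,\ee_s$ denote the shared multiedges. Every component of $\HH'$ is either an isolated multiedge or a chain of multiedges, and all of these are acyclic, so every subset of multiedges on any component is automatically a flat of its graphic matroid. By \cref{lem:bergmanConeToChainOfFlats}, a point $w\in\RR^m$ then lies in $\Trop(\HH')$ precisely when $w$ is constant on each $\ee_j$, i.e., when $w\in L\coloneqq\Span(\bbone_{\ee_1},\ldots,\bbone_{\ee_s})$. Hence $|\Trop(\HH')|=L=|\Trop(\FF^*)|$. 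Moreover, each top-dimensional cone of either complex carries multiplicity one and spans the integer lattice $\sum_j\ZZ\bbone_{\ee_j}$, so the two tropical cycles coincide.

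Finally, since stable intersection depends only on the underlying cycle class---support, multiplicities, and lattices on top-dimensional strata---we obtain $\langle\GG,\HH'\rangle=\langle\GG,\FF^*\rangle$ for every descendant $\HH'$ of $\HH$, and this common value lies in $\{0,1\}$ by \cref{lem:tropicalPairing}(4). The main obstacle I foresee is the matroid-theoretic observation that every subset of the multiedges of a chain is a flat, which collapses the would-be Coxeter-fan structure of $\Trop(\HH')$ to the linear space $L$; once this is in place, everything else follows from the refinement-invariance of stable intersection.
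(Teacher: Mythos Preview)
Your proof is correct and follows the same approach as the paper's one-line proof: both rest on $|\Trop(\HH)|=|\Trop(\HH')|$ for all descendants $\HH'$, after which refinement-invariance of stable intersection and \cref{lem:tropicalPairing}(4) finish the argument; you simply supply the details the paper omits. One terminological quibble: a chain of multiedges is not literally acyclic (parallel edges form $2$-cycles), so your justification ``all of these are acyclic, so every subset of multiedges \ldots\ is automatically a flat'' should instead say that the underlying \emph{simple} graph is a path---this is what guarantees that excision never merges two multiedges and that every union of multiedges is a flat, which is what your argument actually uses.
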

\begin{proof}
  Follows from $|\Trop(\HH)|=|\Trop(\HH')|$ for all descendants $\HH'$ of $\HH$.
\end{proof}

\begin{lemma}
  \label{lem:pairingTriangle}
  Fix $\GG\in V(\Gamma_G)$ and let $\HH\in V(\Gamma_G)$ be an excised triangle.  Then:
  \begin{enumerate}
  \item If there is no edge $(\HH,\FF)\in E(\Gamma_G)$ with $\langle \GG,\FF\rangle > 0$, then $\langle \GG,\HH\rangle = 0$.
  \item If there is exactly one edge $(\HH,\FF)\in E(\Gamma_G)$ with $\langle \GG,\FF\rangle > 0$, then $\langle \GG,\HH\rangle = \langle \GG,\FF\rangle$.
  \end{enumerate}
\end{lemma}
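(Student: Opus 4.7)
The plan is to bracket $\langle \GG,\HH \rangle$ between a monotonicity bound from below and a subadditivity bound from above, and then show that under either hypothesis the two bounds coincide.

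Because $\HH$ is an excised triangle, the only non-isolated multiedges of $\HH$ (and hence the only ones eligible for further excision, per \cref{def:excision}) are the three multiedges $\ee_1,\ee_2,\ee_3$ of its multitriangle component. Consequently $\HH$ has exactly three children in $\Gamma_G$, namely $\FF_i \coloneqq \HH\curvearrowright\ee_i$ for $i=1,2,3$, and each $\FF_i$ is fully excised. By \cref{cor:tropicalization}\eqref{enumitem:isolatedEdges} we have $\Trop(\HH) = \{\sigma_1,\sigma_2,\sigma_3\}$ with $\sigma_i = \RR_{\geq 0}\cdot\bbone_{\ee_i} + L_0$ sharing a common lineality $L_0$, and by \cref{lem:excisionToStar} we have $\Trop(\FF_i) = \Star_{\Trop(\HH)}(\sigma_i) = \Span(\sigma_i) =: L_i$, which is a linear space. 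The lower bound $\langle \GG,\HH\rangle \geq \max_{i=1,2,3}\langle\GG,\FF_i\rangle$ is then immediate from \cref{lem:tropicalPairing}.

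For the matching upper bound $\langle \GG,\HH\rangle \leq \sum_{i=1}^{3}\langle\GG,\FF_i\rangle$, I would compute both sides using the perturbation description of stable intersection in \cref{def:stableIntersectionMS}. Fix a generic $u \in \RR^m$ and a sufficiently small $\varepsilon > 0$. Then $\langle \GG,\HH\rangle$ equals the multiplicity-weighted cardinality of $\Trop(\GG)\cap\bigcup_i(-\sigma_i + \varepsilon u)$, while $\langle \GG,\FF_i\rangle$ equals the multiplicity-weighted cardinality of $\Trop(\GG)\cap(L_i + \varepsilon u)$. Since $-\sigma_i \subseteq L_i$, every point contributing to the former count also contributes to the latter; moreover, the local intersection multiplicities agree because the lattice index $[N:N_\tau + N_{\sigma_i}]$ of \cref{def:stableIntersectionMS} depends on $\sigma_i$ only through its linear span $L_i$. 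Summing over $i$ yields the desired subadditive inequality.

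The lemma then follows by combining the two bounds. Under hypothesis (1) every $\langle\GG,\FF_i\rangle = 0$, and the upper bound forces $0 \leq \langle\GG,\HH\rangle \leq 0$. Under hypothesis (2), writing $\FF$ for the unique child with $\langle\GG,\FF\rangle > 0$, the upper bound collapses to $\langle\GG,\FF\rangle$ and the lower bound matches, giving $\langle\GG,\HH\rangle = \langle\GG,\FF\rangle$. The main delicate step, and the only non-routine one, is the multiplicity-matching in the subadditivity argument: one needs to verify that a transverse intersection of $\Trop(\GG)$ with the translated half-cone $-\sigma_i + \varepsilon u$ carries the same local weight whether it is viewed as a contribution to the stable intersection with $-\sigma_i \subseteq -\Trop(\HH)$ or as a contribution to the stable intersection with its ambient linear span $L_i = \Trop(\FF_i)$. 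This reduces to the identification $N_{\sigma_i} = N_{L_i}$ of the underlying sublattices, together with the fact that both $-\sigma_i$ in $-\Trop(\HH)$ and $L_i$ in $\Trop(\FF_i)$ carry multiplicity $1$.
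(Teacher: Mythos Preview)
Your proof is correct and follows essentially the same approach as the paper: both rely on the containment $|{-\Trop(\HH)}| \subseteq \bigcup_i |{-\Trop(\FF_i)}|$ (your $-\sigma_i \subseteq L_i$) to bound $\langle\GG,\HH\rangle$ above, and on monotonicity to bound it below. Your version is slightly more systematic in packaging the upper bound as a subadditive inequality and in explicitly checking that the lattice indices match (which the paper glosses over, justifiably since each $\FF_i$ is a linear space with binary pairing by \cref{lem:tropicalPairing}(4)).
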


\begin{proof}\
  \begin{enumerate}
  \item Note that $|\Trop(\HH)|\subseteq |\Trop(\FF_1)|\cup|\Trop(\FF_2)|\cup|\Trop(\FF_3)|$.  And $\langle \GG,\FF_i\rangle = 0$ means that for $u\in\RR^m$ generic
    \begin{equation*}
      |\Trop(\GG)|\cap |-\Trop(\FF_i)+u| = \emptyset
    \end{equation*}
    and hence $|\Trop(\GG)|\cap|-\Trop(\HH)+u| = \emptyset$.
  \item As in $(1)$, $|\Trop(\HH)|\subseteq |\Trop(\FF_1)|\cup|\Trop(\FF_2)|\cup|\Trop(\FF_3)|$. If $\langle \GG,\FF_i\rangle > 0$ for some $i \in \{1,2,3\}$ and zero on the other leaves, then, since $|\Trop(\GG)|\cap|-\Trop(\HH)+u|\subseteq |\Trop(\GG)|\cap |-\Trop(\FF_i)+u|$ and, by monotonicity $\langle\GG,\HH\rangle \geq \langle\GG,\FF_i\rangle$, it follows that $\langle \GG,\HH\rangle = \langle \GG,\FF_i\rangle$.
  \qedhere
  \end{enumerate}
\end{proof}


\section{Subadditivity}\label{sec:subadditivity}
\cref{sec:galaxy} closed with a couple of results on the value of the galactic pairing on the outer stars, i.e., excised triangles and fully excised graphs.  To understand $\langle G, G\rangle=2\cdot c_2(G)$, it is therefore important to understand the subadditivity of the galactic pairing in the following sense:

\begin{proposition}
  \label{prop:subadditivity}
  Let $\GG,\GG'\in V(\Gamma_G)$ be two tropical stars.  Let $d(\cdot,\cdot)$ denote the distance on the directed graph $\Gamma_G$.  Then for all $\ell>0$ so that there is some $\HH\in V(\Gamma_G)$ with $d(\GG',\HH)=\ell$, we have
  \begin{equation*}
    \langle \GG, \GG'\rangle \leq \sum_{\substack{\HH\in V(\Gamma_G)\\ d(\GG',\HH)=\ell}} \langle \GG,\HH\rangle.
  \end{equation*}
\end{proposition}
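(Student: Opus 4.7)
The plan is to unpack the stable intersection formula of \cref{def:stableIntersectionMS} and show that every pair of maximal cones contributing to $\Trop(\GG) \cdot (-\Trop(\GG'))$ is matched, after passing to stars, by a pair contributing to $\Trop(\GG) \cdot (-\Trop(\HH))$ for at least one distance-$\ell$ descendant $\HH$ of $\GG'$.  Concretely, I will first expand
\[
  \langle \GG, \GG' \rangle \;=\; \sum_{(\tau_1, \tau_2)} \mult(\tau_1)\,\mult(\tau_2)\,[N : N_1 + N_2],
\]
summed over pairs of maximal cones $\tau_1 \in \Trop(\GG)$ and $\tau_2 \in -\Trop(\GG')$ satisfying the transversality condition $\tau_1 \cap (\tau_2 + \varepsilon u) \neq \emptyset$ of \cref{def:stableIntersectionMS}.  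Since Bergman fans carry trivial multiplicities on maximal cones, each $\mult(\tau_i) = 1$.

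Next, for each such $\tau_2$ I will exhibit a distance-$\ell$ descendant $\HH$ of $\GG'$ whose cone satisfies $-\sigma_\HH \subseteq \tau_2$.  By \cref{lem:tropicalization}, the maximal cone $\tau_2$ decomposes as a product across the connected components of $\GG'$, with each factor coming from a maximal chain of flats of some length $r_j$; the hypothesis that \emph{some} distance-$\ell$ descendant exists forces $\ell \leq \sum_j r_j$.  I can therefore choose a distribution $\ell = \ell_1 + \cdots + \ell_s$ with $\ell_j \leq r_j$ and extract the initial sub-chain of length $\ell_j$ in each component.  By \cref{cor:excisionsAndChains} and \cref{lem:excisionToStar}, the resulting product cone $\sigma_\HH$ corresponds to a valid chain of excisions $\GG' \curvearrowright \ee_1 \curvearrowright \cdots \curvearrowright \ee_\ell$ with $\Trop(\HH) = \Star_{\Trop(\GG')}(\sigma_\HH)$.

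With $\HH$ in hand, I will verify that the pair $(\tau_1, \Star_{\tau_2}(-\sigma_\HH))$ contributes to $\langle \GG, \HH\rangle$ with exactly the same multiplicity as $(\tau_1, \tau_2)$ contributes to $\langle \GG, \GG'\rangle$.  Since $-\sigma_\HH$ is a face of $\tau_2$, we have $\Star_{\tau_2}(-\sigma_\HH) = \tau_2 + \Span(\sigma_\HH) \supseteq \tau_2$ as sets, with the same linear span as $\tau_2$; hence the associated lattice and the lattice index $[N : N_1 + N_2]$ are unchanged, and the multiplicity is still $1$.  Moreover, transversality of $(\tau_1, \tau_2)$ propagates to transversality of $(\tau_1, \Star_{\tau_2}(-\sigma_\HH))$, since the latter cone contains the former.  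Summing over all contributing pairs and their assigned descendants yields
\[
  \sum_{\substack{\HH \in V(\Gamma_G)\\d(\GG',\HH)=\ell}} \langle \GG, \HH\rangle \;\geq\; \langle \GG, \GG'\rangle,
\]
possibly with overcount when several $\HH$ fit inside the same $\tau_2$, which is exactly what the subadditivity inequality allows.

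The step I expect to be the main obstacle is the second, bookkeeping, one: for a disconnected $\GG'$, distance-$\ell$ descendants correspond to arbitrary distributions of the $\ell$ steps among the connected components of $\GG'$, so both the existence of a descendant inside a given maximal cone and the compatibility of the assignment $\tau_2 \mapsto \HH$ with the product decomposition of \cref{lem:tropicalization} need careful handling.  Once these combinatorics are pinned down, the multiplicity comparison and the final summation are essentially formal consequences of the definitions.
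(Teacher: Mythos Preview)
Your argument is correct, but the paper does this in two lines rather than via the cone-by-cone bookkeeping you outline. The paper simply records the support containment
\[
  |\Trop(\GG')| \;\subseteq\; \bigcup_{\substack{\HH\in V(\Gamma_G)\\ d(\GG',\HH)=\ell}} |\Trop(\HH)|,
\]
which follows from \cref{lem:excisionToStar} since every maximal cone of $\Trop(\GG')$ contains a length-$\ell$ face $\sigma_\HH$, and then quotes the generic-displacement argument already used in the proof of \cref{lem:pairingTriangle}: each perturbed intersection point of $|\Trop(\GG)|\cap(-|\Trop(\GG')|+u)$ lands in some $|\Trop(\GG)|\cap(-|\Trop(\HH)|+u)$ with the same local lattice index. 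What you do instead is build an explicit injection from contributing pairs $(\tau_1,\tau_2)$ in $\langle\GG,\GG'\rangle$ to contributing pairs in some $\langle\GG,\HH\rangle$, by extracting an initial sub-chain from the chain of flats underlying each $\tau_2$. This makes the multiplicity preservation completely transparent (the span of $\Star_{\tau_2}(-\sigma_\HH)$ equals that of $\tau_2$, so the lattice index is unchanged), at the cost of the distribution-of-$\ell$-among-components bookkeeping you correctly flag as the main nuisance. Both arguments rest on the same geometric fact --- passing to a star enlarges each maximal cone without changing its linear span --- and the paper's version is essentially yours compressed to the level of supports rather than individual cones.
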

\begin{proof}
  Using \cref{lem:excisionToStar}, we obtain $|\Trop(\GG')|\subseteq \bigcup_{\HH\in V(\Gamma_G), d(\GG',\HH)=\ell} |\Trop(\HH)|$, from which the statement follows as in the proof \cref{lem:pairingTriangle}.
\end{proof}

First note that the ``sub'' in ``subadditive'' is necessary, as one can find examples which show that the galactic pairing is not additive:

\begin{example}
  \label{ex:nonAdditivity}
  Let $G$ be a chain of triangles with 11 edges labelled as in \cref{fig:excisions2}.  Consider its two excisions $\GG\coloneqq G\curvearrowright 2\curvearrowright 4\curvearrowright 6$ and $\HH \curvearrowright 1\curvearrowright 23\curvearrowright 45\curvearrowright 10$.
  Let further $\FF_i\coloneqq \HH \curvearrowright \ee_i$ where $\ee_1,\ee_2,\ee_3$ denote the multiedges in the multitriangle of $\HH$. Then one can compute that
  \begin{equation*}
    \langle \GG, \HH \rangle = 2\neq 3=\underbrace{\langle \GG,\FF_1\rangle}_{=1} + \underbrace{\langle \GG,\FF_2\rangle}_{=1} + \underbrace{\langle \GG,\FF_3\rangle}_{=1}
  \end{equation*}
  which shows that the galactic pairing is not additive.
\end{example}

\begin{figure}
  \centering
  \begin{tikzpicture}[every node/.style = {font=\footnotesize}]

    \node(first)
    {
      \begin{tikzpicture}[x={(0.7,0)},y={(0,0.7)}]
        \node[circle, scale= 0.3, fill](a1) at (4,4)[]{};
        \node[circle, scale= 0.3, fill](a2) at (6,4)[]{};
        \node[circle, scale= 0.3, fill](a3) at (5,6)[]{};
        \node[circle, scale= 0.3, fill](a4) at (7,6)[]{};
        \node[circle, scale= 0.3, fill](a5) at (8,4)[]{};
        \node[circle, scale= 0.3, fill](a6) at (9,6)[]{};
        \node[circle, scale= 0.3, fill](a7) at (10,4)[]{};

        \draw (a1)--(a2) node[midway, below] {$2$};
        \draw (a2)--(a3) node [midway,left] {$3$};
        \draw (a3)--(a1) node[midway, left] {$1$};
        \draw (a4)--(a2) node[midway, left] {$5$};
        \draw (a4)--(a3) node[midway, above] {$4$};
        \draw (a4)--(a5) node[midway, left] {$7$};
        \draw (a2)--(a5) node[midway, below] {$6$};
        \draw (a6)--(a5) node[midway, left] {$9$};
        \draw (a6)--(a4) node[midway, above] {$8$};
        \draw (a6)--(a7) node[midway, left] {$11$};
        \draw (a5)--(a7) node[midway, below] {$10$};

        \node[circle, scale= 0.3, fill](b1) at (0,0)[]{};
        \node[circle, scale= 0.3, fill](b2) at (0,2)[]{};
        \node[circle, scale= 0.3, fill](b3) at (1,0)[]{};
        \node[circle, scale= 0.3, fill](b4) at (1,2)[]{};
        \node[circle, scale= 0.3, fill](b5) at (2,0)[]{};
        \node[circle, scale= 0.3, fill](b6) at (2,2)[]{};
        \node[circle, scale= 0.3, fill](b7) at (3,1)[]{};
        \node[circle, scale= 0.3, fill](b8) at (4,0)[]{};
        \node[circle, scale= 0.3, fill](b9) at (4,2)[]{};
        \node[circle, scale= 0.3, fill](b10) at (5,1)[]{};

        \draw (b1)--(b2) node[midway, left] {$2$};
        \draw (b3)--(b4) node[midway, left] {$4$};
        \draw (b5)--(b6) node[midway, left] {$6$};
        \draw (b7)--(b8) node[midway, left, yshift = -1mm] {$1357$};
        \draw (b7)--(b9) node[midway, left, yshift = 1mm] {$8$};
        \draw (b8)--(b9) node[midway, left] {$9$};
        \draw (b9)--(b10) node[midway, right, yshift = 1mm] {$11$};
        \draw (b8)--(b10) node[midway, right, yshift = -1mm] {$10$};

        \node[circle, scale= 0.3, fill](c1) at (9,0)[]{};
        \node[circle, scale= 0.3, fill](c2) at (9,2)[]{};
        \node[circle, scale= 0.3, fill](c3) at (10,0)[]{};
        \node[circle, scale= 0.3, fill](c4) at (10,2)[]{};
        \node[circle, scale= 0.3, fill](c5) at (11,0)[]{};
        \node[circle, scale= 0.3, fill](c6) at (11,2)[]{};
        \node[circle, scale= 0.3, fill](c7) at (12,0)[]{};
        \node[circle, scale= 0.3, fill](c8) at (12,2)[]{};
        \node[circle, scale= 0.3, fill](c9) at (13,0)[]{};
        \node[circle, scale= 0.3, fill](c10) at (14,2)[]{};
        \node[circle, scale= 0.3, fill](c11) at (15,0)[]{};

        \draw (c1)--(c2) node[midway, left] {$1$};
        \draw (c3)--(c4) node[midway, left] {$23$};
        \draw (c5)--(c6) node[midway, left] {$45$};
        \draw (c7)--(c8) node[midway, left] {$10$};
        \draw (c9)--(c10) node[midway, left, yshift = 1mm] {$67$};
        \draw (c9)--(c11) node[midway, below] {$9, 11$};
        \draw (c10)--(c11) node[midway, right, yshift = 1mm] {$8$};

        \draw [-stealth,shorten <=1mm, shorten >=1mm] (4.8,5.5) to [bend right = 50] node[above left,  color=red]{$2$} (3.5,4.5) [red];
        \draw [-stealth,shorten <=1mm, shorten >=1mm] (3.5,4.5) to [bend right = 50] node[above left,  color=red]{$4$} (2.2,3.5) [red];
        \draw [-stealth,shorten <=1mm, shorten >=1mm] (2.2,3.5) to [bend right = 50] node[above left,  color=red]{$6$} (1,2.5) [red];
        \draw [-stealth,shorten <=1mm, shorten >=1mm] (9.3,5.5) to [bend left = 50] node[above right,  color=red]{$1$} (10.25,4.75) [red];
        \draw [-stealth,shorten <=1mm, shorten >=1mm] (10.25,4.75) to [bend left = 50] node[above right,  color=red]{$23$} (11.2,4) [red];
        \draw [-stealth,shorten <=1mm, shorten >=1mm] (11.2,4) to [bend left = 50] node[above right,  color=red]{$45$} (12.15,3.25) [red];
        \draw [-stealth,shorten <=1mm, shorten >=1mm] (12.15,3.25) to [bend left = 50] node[above right,  color=red]{$10$} (13.1,2.5) [red];

        \node (d1) at (7,2.7) []{G};
        \node (d2) at (6.2,1) []{$\GG'$};
        \node (d3) at (7.8,1) []{$\HH$};
      \end{tikzpicture}
    };
  \end{tikzpicture}
  \caption{The Laman graph $G$ and its excisions $\GG'$, $\HH$ from \cref{ex:nonAdditivity}\pointOrNoPoint}
  \label{fig:excisions2}
\end{figure}

Hence the main question is to find conditions under which the galactic pairing actually increases in the sense of
\begin{equation*}
  \langle \GG, \GG'\rangle > \max\Big(\big\{ \langle \GG,\HH\rangle \mid \HH\in V(\Gamma_G), d(\GG',\HH)=1\big\}\Big).
\end{equation*}
While this seems to be a purely combinatorial question at first glance, it is difficult to translate the geometric nature of the tropical intersection product purely into combinatorics on the graphs.  We therefore close the section with an unfortunately not purely combinatorial condition under which the pairing does increase:

\begin{definition}
  \label{def:additivePair}
  Let $\HH,\HH'\in V(\Gamma_G)$ be two excised triangles of $G$, with triangle multiedges $\aa$, $\bb$, $\cc$, and $\aa'$, $\bb'$, $\cc'$ respectively. For $i=1,2$, consider $\FF_i=\HH\curvearrowright\ee_i$ and $\FF_i'=\HH\curvearrowright\ee_i'$ for some $\ee_i\in \{\aa, \bb, \cc\}$ and $\ee_i'\in\{\aa', \bb', \cc'\}$ such that $(\FF_1,\FF_1') \neq (\FF_2,\FF_2')$ and $\langle \FF_i,\FF_i'\rangle=1$.  We say that the two pairs $(\FF_1,\FF_1')$ and $(\FF_2,\FF_2')$ are \emph{additive} if there is some $w\in\RR^m$ such that
  \begin{equation*}
    u_1 - u_1' = w = u_2 - u_2' \text{ where } (u_1, u_1') \neq (u_2, u_2')
  \end{equation*}
  and $u_i$ and $u_i'$ induce chains of flats on $\HH$ and $\HH'$ where $\ee_i$ and $\ee_i'$ come before $\{\aa, \bb, \cc\} \setminus \{\ee_i\}$ and $\{\aa', \bb', \cc'\} \setminus \{\ee_i'\}$ respectively, i.e.,
  \begin{align*}
    &u_{i,j}>u_{i,k} \text{ for } j\in \ee, k\in \{\aa, \bb, \cc\} \setminus \{\ee\} \quad \text{and}\\
    &u_{i,j}'>u_{i,k}' \text{ for } j\in \ee', k\in \{\aa', \bb', \cc'\} \setminus \{\ee'\}.
  \end{align*}
\end{definition}

\begin{lemma}
  \label{lem:coneShift}
  Let $\sigma$ and $\sigma'$ be two cones in $\RR^m$.  Then for $w\in\RR^m$ we have
  \begin{equation*}
    \sigma \cap (w + \sigma') \neq \emptyset \quad \iff \quad w \in \sigma + (-\sigma').
  \end{equation*}
\end{lemma}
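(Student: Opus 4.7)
The plan is to prove both implications by a direct unwinding of the definitions of Minkowski sum and translation of cones; no tropical or balancing machinery is needed here.

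For the forward direction, I would start by assuming $\sigma \cap (w + \sigma') \neq \emptyset$ and picking an element $x$ in the intersection. From $x \in w + \sigma'$, I obtain $y \in \sigma'$ with $x = w + y$, and then rearrange to get $w = x - y = x + (-y)$, which exhibits $w$ as an element of $\sigma + (-\sigma')$ since $x \in \sigma$ and $-y \in -\sigma'$.

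For the reverse direction, I would start with $w \in \sigma + (-\sigma')$, write $w = x + (-y)$ for some $x \in \sigma$ and $y \in \sigma'$, and then observe that $x = w + y \in w + \sigma'$. Since also $x \in \sigma$, this gives $x \in \sigma \cap (w + \sigma')$, proving non-emptiness.

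There is no real obstacle here: the statement is a purely set-theoretic equivalence that does not use the cone structure at all beyond the meaning of the notation $-\sigma' = \{-y \mid y \in \sigma'\}$ and the Minkowski sum. The value of the lemma lies in its role as a reformulation tool to be applied later (presumably in the proof that additive pairs produce the expected increase in the galactic pairing), but the proof itself is essentially a two-line verification in each direction.
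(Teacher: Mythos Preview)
Your proposal is correct and follows essentially the same element-chasing argument as the paper: pick a point in the intersection (or a decomposition of $w$) and rearrange. The paper in fact only spells out one direction explicitly and calls the other ``straightforward,'' so your write-up is, if anything, slightly more complete.
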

\begin{proof}
  The ``$\Rightarrow$'' direction is straightforward.  For the ``$\Leftarrow$'' direction, consider a translation $w\in\RR^m$ such that $\sigma \cap (w + \sigma') \neq \emptyset$, say $u\in \sigma \cap (w + \sigma')$ and set $u'\coloneqq w-u$.  Then $u\in w+\sigma'$ implies $u'\in -\sigma'$ and hence $w=u+u'\in\sigma+(-\sigma')$.
\end{proof}

\begin{theorem}
  \label{thm:additive}
  Let $G$ be a Laman graph and $\HH,\HH'\in V(\Gamma_G)$ be two of its excised triangles. Suppose for $i=1,2$ there are $\FF_i=\HH\curvearrowright\ee_i$ and $\FF_i'=\HH\curvearrowright\ee_i'$ for some $\ee_i\in \EE(\HH)$ and $\ee_i'\in\EE(\HH')$ such that $\langle \FF_i,\FF_i'\rangle=1$.  Then
  \begin{equation*}
    (\FF_1,\FF_1') \text{ and } (\FF_2,\FF_2') \text{ additive} \quad \Longrightarrow \quad \langle \HH,\HH'\rangle = 2.
  \end{equation*}
\end{theorem}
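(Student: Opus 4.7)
The plan is to compute the multiplicity of the stable intersection $\Trop(\HH)\wedge(-\Trop(\HH'))$ at its unique cone $\Span(\bbone_{[m]})$ and exhibit two distinct contributions of multiplicity $1$ each.  By \cref{cor:tropicalization}, both $\Trop(\HH)$ and $\Trop(\HH')$ have exactly three maximal cones, one per multitriangle edge, say $\sigma_\aa,\sigma_\bb,\sigma_\cc$ and $\sigma_{\aa'},\sigma_{\bb'},\sigma_{\cc'}$.  By \cref{def:stableIntersectionMS}, $\langle \HH,\HH'\rangle$ is then the sum, over pairs $(\sigma,-\sigma')$ of maximal cones whose Minkowski sum is full-dimensional, of the lattice indices $[N:N_\sigma+N_{-\sigma'}]$.

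First I would reinterpret the hypothesis $\langle \FF_i,\FF_i'\rangle=1$ via \cref{lem:excisionToStar}: since $\FF_i$ and $\FF_i'$ are fully excised, $\Trop(\FF_i)=\Star_{\Trop(\HH)}(\sigma_i)$ and $\Trop(\FF_i')=\Star_{\Trop(\HH')}(\sigma_i')$ reduce to the linear spans of $\sigma_i$ and $\sigma_i'$ by \cref{cor:tropicalization}.  Hence $\langle \FF_i,\FF_i'\rangle=1$ forces $\Span(\sigma_i)+\Span(-\sigma_i')=\RR^m$ with lattice index $1$, so whenever the pair $(\sigma_i,-\sigma_i')$ contributes to $\langle \HH,\HH'\rangle$ it contributes exactly $1$.

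Next I would use the additivity condition to exhibit both contributions simultaneously.  The equations $u_i-u_i'=w$ with $u_i\in\sigma_i$ and $u_i'\in\sigma_i'$ place $w\in\sigma_i+(-\sigma_i')$, so \cref{lem:coneShift} yields $\sigma_i\cap(w+\sigma_i')\neq\emptyset$ for both $i$.  A small generic perturbation of the shift direction near $w$ preserves non-emptiness while making each intersection transverse, producing intersection points in $\sigma_i\cap(-\sigma_i'+\varepsilon v)$ for $i=1,2$.  The distinctness clause $(u_1,u_1')\neq(u_2,u_2')$ ensures that these two intersections sit on distinct translates of the lineality $\Span(\bbone_{[m]})$, so they are counted separately in the limit $\varepsilon\to 0$, giving $\langle \HH,\HH'\rangle\geq 2$.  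For the matching upper bound, I would apply \cref{prop:subadditivity} to $\HH'$ at distance one, together with \cref{lem:pairingTriangle} and part~(4) of \cref{lem:tropicalPairing}, which cap the resulting sum by the number of additive leaf pairs with value~$1$, namely two.

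The main difficulty will be reconciling the sign convention of \cref{def:additivePair}, where $w$ appears as a difference $u_i-u_i'$ with both $u_i,u_i'$ in positively oriented Bergman cones, against the stable intersection of $\Trop(\HH)$ with $-\Trop(\HH')$, which natively wants a Minkowski-sum witness $\varepsilon v\in\sigma_i+\sigma_i'$ (before negation).  \cref{lem:coneShift} provides the bridge, but one must verify that a generic small perturbation around $w$ lies in the relative interior of both $\sigma_i+(-\sigma_i')$ simultaneously, so that the two distinct witnesses $(u_1,u_1')$ and $(u_2,u_2')$ persist as genuinely transversal intersection points of full-dimensional Minkowski sums and are not absorbed into a single cone of the stable intersection.
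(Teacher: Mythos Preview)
Your lower bound argument is essentially the paper's: both use \cref{lem:coneShift} to turn the witnesses $u_i-u_i'=w$ into two distinct nonempty intersections $\sigma_{\ee_i}\cap(-\sigma_{\ee_i'}+w)$, and then count these as two contributions to the stable intersection multiplicity. Your added remark about perturbing $w$ to a nearby generic direction is a reasonable way to make this rigorous.

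The gap is in your upper bound. Subadditivity (\cref{prop:subadditivity}) applied to the three children $\FF_1',\FF_2',\FF_3'$ of $\HH'$ gives
\[
  \langle \HH,\HH'\rangle \;\leq\; \sum_{j=1}^{3}\langle \HH,\FF_j'\rangle \;\leq\; 3,
\]
since each summand lies in $\{0,1\}$ by \cref{lem:tropicalPairing}(4). Nothing in the hypotheses forces the third summand to vanish: you are told that two \emph{specific} pairs $(\FF_i,\FF_i')$ have pairing $1$, which by monotonicity forces $\langle \HH,\FF_i'\rangle=1$ for $i=1,2$, but says nothing about $\langle \HH,\FF_3'\rangle$. \cref{lem:pairingTriangle} does not help either, as it only treats the cases of zero or one positive child. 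So your combinatorial route yields $\leq 3$, not $\leq 2$.

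The paper obtains $\leq 2$ by a genuinely geometric reduction: it passes to a $3$-dimensional coordinate complement $V$ of $\mathrm{Lineality}(\Trop(\HH))+\mathrm{Lineality}(\Trop(\HH'))$, observes that $\Trop(\HH)\cap V$ and $\Trop(\HH')\cap V$ remain tropical linear spaces with the same intersection product, and then invokes that in this low dimension the product can only take the values $0$, $1$, or $2$. This step uses the special structure of excised triangles (three rays whose sum lies in the lineality) and cannot be recovered from subadditivity alone.
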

\begin{proof}
  We first show that $\langle \HH,\HH'\rangle\geq 2$.
  Suppose that $(\FF_1,\FF_1')$ and $(\FF_2,\FF_2')$ are additive, i.e., there is some $w\in\RR^m$ such that $u_1 - u_1' = w = u_2 - u_2'$ with $u_i$ and $u_i'$ satisfying the properties in \cref{def:additivePair}.

  Using the notation in \cref{cor:tropicalization} (3), which states that $\Trop(\HH)$ and $\Trop(\HH')$ each consists of three maximal cones induced by the triangle multiedges, we have $u_i\in\sigma_{\ee_i}\in\Trop(\HH)$ and $u_i'\in\sigma_{\ee_i'}\in\Trop(\HH')$.  Hence $u_i=w-u_i'\in (-\Trop(\HH')+w)$, and combining both we obtain $u_i\in \sigma_{\ee_i}\cap(\sigma_{\ee_i'}+w)$ by \cref{lem:coneShift}.  Moreover, $\langle \FF_i,\FF_i'\rangle=1$ implies that $\sigma_{\ee_i}\cap(\sigma_{\ee_i'}+w)\in \Trop(\HH)\wedge(-\Trop(\HH')+w)$, which shows that $\Trop(\HH)\wedge(-\Trop(\HH')+w)$ consists of at least two polyhedra.  Hence $\langle \HH,\HH'\rangle\geq 2$.

  Next we show that $\langle \HH,\HH'\rangle\leq 2$.  For that, consider a $3$-dimensional coordinate subspace $V$ that is a complement to $\mathrm{Lineality}(\Trop(\HH))+\mathrm{Lineality}(\Trop(\HH'))$, where $\mathrm{Lineality}(\cdot)$ denotes the lineality space.  On $V\cong\RR^3$, $\Trop(\HH)\cap V$ and $\Trop(\HH')\cap V$ remain tropical linear spaces and $(\Trop(\HH)\cap V)\cdot (-\Trop(\HH\cap V))=\Trop(\HH)\cdot(-\Trop(\HH'))$.  Therefore, $\Trop(\HH)\cdot(-\Trop(\HH'))$ must be either $0$, $1$, or $2$.
\end{proof}


\section{Arboreal Pairs}\label{sec:arboreal}
In this section, we show how fully excised graphs that are positively paired are equivalent to arboreal pairs in \cite{Ardila-MantillaEurPenaguiao2024}.

\begin{definition}
  \label{def:arboreal}
  Given two maximal (reduced) chains of flats $F_\bullet$ and $F_\bullet'$ of a Laman graph $G$, the \emph{intersection graph} of $F_\bullet$ and $F_\bullet'$ is the bipartite multigraph $I_{F_\bullet,F_\bullet'}$ with the following vertex and edge sets:
  \begin{align*}
    V(I_{F_\bullet,F_\bullet'})&\coloneqq\{F_j\mid j=1,\dots,r\}\sqcup\{F_j'\mid j=1,\dots,r\},\\
    E(I_{F_\bullet,F_\bullet'})&\coloneqq\{ (F_{j_i},F_{k_i}') \mid i=1,\dots,m\},
  \end{align*}
  where $F_{j_i}$ and $F_{j_i}'$ are the unique reduced flats with $i\in F_{j_i}$ and $i\in F_{j_i}'$.
  We say $F_\bullet$ and $F_\bullet'$ form an \emph{arboreal pair}, if the intersection graph $I_{F_\bullet,F_\bullet'}$ is a tree.
\end{definition}

\begin{example}
  \label{ex:arboreal}
  \cref{fig:Intersection} shows a Laman graph G and its intersection graph for the maximal chains of flats below.
  \begin{equation*}
    \begin{array}{rccccccccc}
      F_1 \colon &\emptyset&\subsetneq &1&\subsetneq& 14 &\subsetneq& 12345&\subsetneq& [7]\\
      F_2 \colon &\emptyset&\subsetneq &2&\subsetneq& 26 &\subsetneq& 1236 &\subsetneq& [7]\\
      F_3 \colon &\emptyset&\subsetneq &1&\subsetneq& 123&\subsetneq& 12345&\subsetneq& [7]
    \end{array}
  \end{equation*}
  While $I_{F_1,F_2}$ is a tree, $I_{F_2,F_3}$ contains a multiedge.  Hence $F_1,F_2$ form an arboreal pair, and $F_2, F_3$ do not.
\end{example}

\begin{figure}[t]
  \centering
  \begin{tikzpicture}
    \node (first)
    {
      \begin{tikzpicture}[x={(0.7,0)},y={(0,0.7)}]
        \node [circle, scale = 0.3, fill](v1) at (0,0){};
        \node [circle, scale = 0.3, fill](v2) at (2,0){};
        \node [circle, scale = 0.3, fill](v3) at (4,0){};
        \node [circle, scale = 0.3, fill](v4) at (1,2){};
        \node [circle, scale = 0.3, fill](v5) at (3,2){};

        \draw (v1) -- (v4)node[midway,left, blue] {1}[blue];
        \draw (v1) -- (v2)node[midway,below, Orange] {2} [Orange];
        \draw (v2) -- (v4)node[midway,left, Orange] {3} [Orange];
        \draw (v4) -- (v5)node[midway,above, RedViolet] {4} [RedViolet];
        \draw (v2) -- (v5)node[midway,right, Orange] {5}[Orange];
        \draw (v5) -- (v3)node[midway,right, Green] {7}[Green];
        \draw (v3) -- (v2)node[midway,below, Green] {6} [Green];

      \end{tikzpicture}
    };
      \node[anchor=north] (first2) at (first.south)
    {
    \definecolor{red-violet}{rgb}{0.78, 0.08, 0.52}
      \begin{tikzpicture}[x={(0.7,0)},y={(0,0.7)}]
        \node [circle, scale = 0.3, fill](v1) at (0,0){};
        \node [circle, scale = 0.3, fill](v2) at (2,0){};
        \node [circle, scale = 0.3, fill](v3) at (4,0){};
        \node [circle, scale = 0.3, fill](v4) at (1,2){};
        \node [circle, scale = 0.3, fill](v5) at (3,2){};

        \draw (v1) -- (v4)node[midway,left, Orange] {1}[Orange];
        \draw (v1) -- (v2)node[midway,below, blue] {2} [blue];
        \draw (v2) -- (v4)node[midway,left, Orange] {3} [Orange];
        \draw (v4) -- (v5)node[midway,above, Green] {4} [Green];
        \draw (v2) -- (v5)node[midway,right, Green] {5}[Green];
        \draw (v5) -- (v3)node[midway,right, Green] {7}[Green];
        \draw (v3) -- (v2)node[midway,below, RedViolet] {6}[RedViolet];

      \end{tikzpicture}
    };
     \node[anchor=north] (first3) at (first2.south)
    {
      \begin{tikzpicture}[x={(0.7,0)},y={(0,0.7)}]
        \node [circle, scale = 0.3, fill](v1) at (0,0){};
        \node [circle, scale = 0.3, fill](v2) at (2,0){};
        \node [circle, scale = 0.3, fill](v3) at (4,0){};
        \node [circle, scale = 0.3, fill](v4) at (1,2){};
        \node [circle, scale = 0.3, fill](v5) at (3,2){};

        \draw (v1) -- (v4)node[midway,left, blue] {1}[blue];
        \draw (v1) -- (v2)node[midway,below, RedViolet] {2}[RedViolet];
        \draw (v2) -- (v4)node[midway,left, RedViolet] {3}[RedViolet];
        \draw (v4) -- (v5)node[midway,above, Orange] {4}[Orange];
        \draw (v2) -- (v5)node[midway,right, Orange] {5}[Orange];
        \draw (v5) -- (v3)node[midway,right, Green] {7}[Green];
        \draw (v3) -- (v2)node[midway,below, Green] {6}[Green];

      \end{tikzpicture}
    };
     \node[anchor=west, yshift = -10mm] (second) at (first.east)
    {
      \begin{tikzpicture}[x={(0.7,0)},y={(0,0.7)}]
        \node [circle, scale= 0.3, fill](v1) at (0,0){} ;
        \node [circle, scale= 0.3, fill](v2) at (2,0){} ;
        \node [circle, scale= 0.3, fill](v3) at (4,0){} ;
        \node [circle, scale= 0.3, fill](v4) at (6,0){} ;
        \node [circle, scale= 0.3, fill](v5) at (0,2){} ;
        \node [circle, scale= 0.3, fill](v6) at (2,2){} ;
        \node [circle, scale= 0.3, fill](v7) at (4,2){} ;
        \node [circle, scale= 0.3, fill](v8) at (6,2){} ;

        \node(V1) at (0,0) [below, yshift = -0.7mm, blue]{2};
        \node(V2) at (2,0) [below, yshift = -0.7mm, RedViolet]{6};
        \node(V3) at (4,0) [below, yshift = -0.7mm, Orange]{13};
        \node(V4) at (6,0) [below, yshift = -0.7mm, Green]{457};
        \node(V5) at (0,2) [above, yshift = 0.7mm, blue]{1};
        \node(V6) at (2,2) [above, yshift = 0.7mm, RedViolet]{4};
        \node(V7) at (4,2) [above, yshift = 0.7mm, Orange]{235};
        \node(V8) at (6,2) [above, yshift = 0.7mm, Green]{67};
        
        \node(A1) at (0,2) [above, left, xshift = -0.7mm, yshift = 2.8mm]{$F_1^{\text{red}} \colon $};
        \node(A2) at (0,0) [below, left, xshift = -0.7mm, yshift = -4mm]{$F_2^{\text{red}} \colon $};

        \draw (v5)--(v3)--(v7)--(v4)--(v8)--(v2);
        \draw (v7)--(v1);
        \draw (v4)--(v6);
      \end{tikzpicture}
    };
      \node[anchor=north, yshift = -5mm] (third) at (second.south)
    {
      \begin{tikzpicture}[x={(0.7,0)},y={(0,0.7)}]
        \node [circle, scale= 0.3, fill](v1) at (0,0) {} ;
        \node [circle, scale= 0.3, fill](v2) at (2,0){} ;
        \node [circle, scale= 0.3, fill](v3) at (4,0){} ;
        \node [circle, scale= 0.3, fill](v4) at (6,0){} ;
        \node [circle, scale= 0.3, fill](v5) at (0,2){} ;
        \node [circle, scale= 0.3, fill](v6) at (2,2){} ;
        \node [circle, scale= 0.3, fill](v7) at (4,2){} ;
        \node [circle, scale= 0.3, fill](v8) at (6,2){} ;

        \node(V1) at (0,0) [below, yshift = -0.7mm, blue]{1};
        \node(V2) at (2,0) [below, yshift = -0.7mm, RedViolet]{23};
        \node(V3) at (4,0) [below, yshift = -0.7mm, Orange]{45};
        \node(V4) at (6,0) [below, yshift = -0.7mm, Green]{67};
        \node(V5) at (0,2) [above, yshift = 0.7mm, blue]{2};
        \node(V6) at (2,2) [above, yshift = 0.7mm, RedViolet]{6};
        \node(V7) at (4,2) [above, yshift = 0.7mm, Orange]{13};
        \node(V8) at (6,2) [above, yshift = 0.7mm, Green]{457};

        \node(A1) at (0,2) [above, left, xshift = -0.7mm, yshift = 2.8mm]{$F_2^{\text{red}} \colon $};
        \node(A2) at (0,0) [below, left, xshift = -0.7mm, yshift = -4mm]{$F_3^{\text{red}} \colon $};

        \draw (v4)--(v8)--(v3);
        \draw (v8) to [bend right = 15] (v3);
        \draw (v7)--(v1);
        \draw (v4)--(v6);
        \draw (v7)--(v2)--(v5); 
      \end{tikzpicture}
    };
         \node[anchor=west] (fourth) at (second.east)
    {
       \begin{tikzpicture}[x={(0.7,0)},y={(0,0.7)}]
        \node[circle, scale= 0.3, fill](v2) at (7,2){};
        \node[circle, scale= 0.3, fill](v3) at (6,0){};
        \node[circle, scale= 0.3, fill](v4) at (5,2){};
        \node[circle, scale= 0.3, fill](v5) at (4,0){};
        \node[circle, scale= 0.3, fill](v6) at (3,2){};
        \node[circle, scale= 0.3, fill](v7) at (2,0){};
        \node[circle, scale= 0.3, fill](v8) at (1,2){};
        \node[circle, scale= 0.3, fill](v9) at (8,0){};

        \node(V2) at (7,2) [above, yshift = 0.7mm, Green]{67};
        \node(V3) at (6,0) [below, yshift = -0.7mm, Green]{457};
        \node(V4) at (5,2) [above, yshift = 0.7mm, RedViolet]{4};
        \node(V5) at (4,0) [below, yshift = -0.7mm, blue]{2};
        \node(V6) at (3,2) [above, yshift = 0.7mm, Orange]{235};
        \node(V7) at (2,0) [below, yshift = -0.7mm, Orange]{13};
        \node(V8) at (1,2) [above, yshift = 0.7mm, blue]{1};
        \node(V9) at (8,0) [below, yshift = -0.7mm, RedViolet]{6};

        \draw (v5) -- node[sloped,anchor=center] {$<$} (v6) -- node[sloped,anchor=center] {$<$} (v7) -- node[sloped,anchor=center] {$<$} (v8);
        \draw (v3) -- node[sloped,anchor=center] {$<$} (v6) ;
        \draw (v4) -- node[sloped,anchor=center] {$>$} (v3) ;
        \draw (v3) -- node[sloped,anchor=center] {$<$} (v2) -- node[sloped,anchor=center] {$<$} (v9);

      \end{tikzpicture}
    
    };
     \node[anchor=west] (fifth) at (third.east)
    {
      \begin{tikzpicture}[x={(0.7,0)},y={(0,0.7)}]
        \node[circle, scale= 0.3, fill](v2) at (7,2){};
        \node[circle, scale= 0.3, fill](v3) at (6,0){};
        \node[circle, scale= 0.3, fill](v4) at (5,2){};
        \node[circle, scale= 0.3, fill](v5) at (4,0){};
        \node[circle, scale= 0.3, fill](v6) at (3,2){};
        \node[circle, scale= 0.3, fill](v7) at (2,0){};
        \node[circle, scale= 0.3, fill](v8) at (1,2){};
        \node[circle, scale= 0.3, fill](v9) at (8,0){};

        \node(V2) at (7,2) [above, yshift = 0.7mm, Green]{457};
        \node(V3) at (6,0) [below, yshift = -0.7mm, Green]{67};
        \node(V4) at (5,2) [above, yshift = 0.7mm, RedViolet]{6};
        \node(V5) at (4,0) [below, yshift = -0.7mm, blue]{1};
        \node(V6) at (3,2) [above, yshift = 0.7mm, Orange]{13};
        \node(V7) at (2,0) [below, yshift = -0.7mm, RedViolet]{23};
        \node(V8) at (1,2) [above, yshift = 0.7mm, blue]{2};
        \node(V9) at (8,0) [below, yshift = -0.7mm, Orange]{45};

        \draw (v5)--(v6)--(v7)--(v8);
        \draw (v2) to [bend left=20] node[sloped,anchor=center] {$<$} (v9) ;
        \draw (v2) to [bend right=20] node[sloped,anchor=center] {$<$} (v9) ;
        \draw (v4) -- node[sloped,anchor=center] {$>$} (v3) ;
        \draw (v3) -- node[sloped,anchor=center] {$<$} (v2);

      \end{tikzpicture}
    };
    \end{tikzpicture}
  \caption{Laman graph G with intersection graphs $I_{F_1,F_2}$ (tree) and $I_{F_2,F_3}$ (with cycle)\pointOrNoPoint}
  \label{fig:Intersection}
\end{figure}
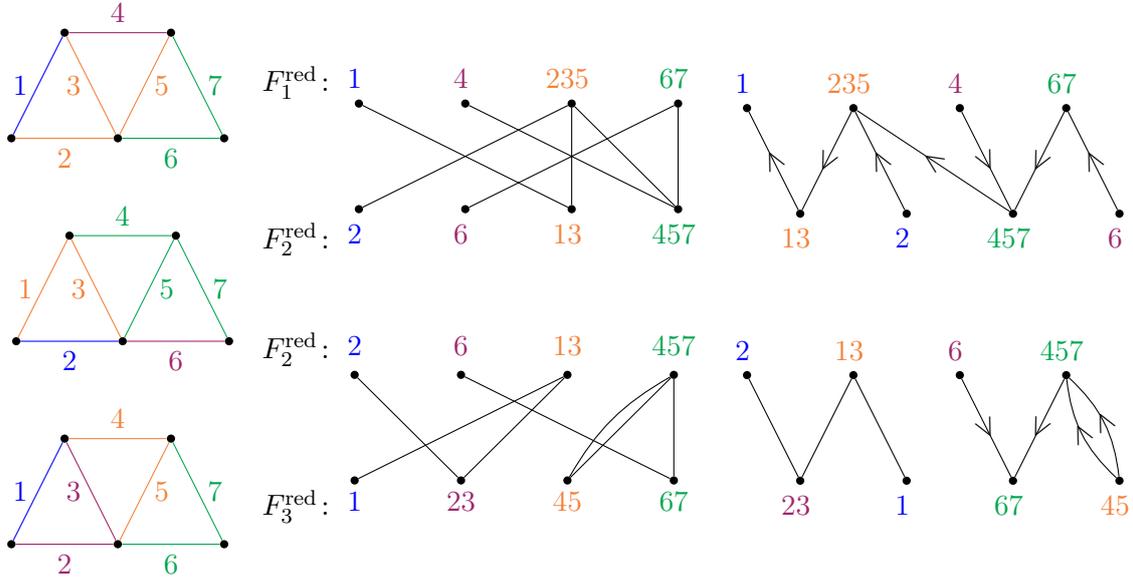

The statement of the following \cref{prop:arboreal} was communicated by Oliver Clarke and Ben Smith.  \cref{ex:columnReduction} illustrates the parts of the proof on an example.

\begin{proposition}
  \label{prop:arboreal}
  Let $F_\bullet, F_\bullet'$ be two maximal reduced chains of flats on $G$ giving rise to fully excised $\FF,\FF'\in V(\Gamma_G)$, respectively.  Then
  \begin{equation*}
    F \text{ and } F' \text{ form an arboreal pair}\quad\Longleftrightarrow\quad \langle\FF,\FF'\rangle=1.
  \end{equation*}
\end{proposition}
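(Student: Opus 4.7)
The plan is to reduce the pairing to a linear algebra question on the sum of two subspaces and then recognize the resulting condition as connectedness of $I_{F_\bullet,F_\bullet'}$.

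Since $\FF$ and $\FF'$ are fully excised, \cref{cor:tropicalization} shows that $\Trop(\FF)$ and $\Trop(\FF')$ each consist of a single linear subspace of $\RR^m$:
\begin{equation*}
  L_\FF \coloneqq \Span(\bbone_{\ee_1},\dots,\bbone_{\ee_{n-1}}), \qquad L_{\FF'} \coloneqq \Span(\bbone_{\ee_1'},\dots,\bbone_{\ee_{n-1}'}),
\end{equation*}
where $\ee_1,\dots,\ee_{n-1}$ and $\ee_1',\dots,\ee_{n-1}'$ are the multiedges of $\FF$ and $\FF'$, corresponding bijectively to the (reduced) flats of $F_\bullet$ and $F_\bullet'$. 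Using the generic-translate characterization of stable intersection (\cref{def:stableIntersectionMS}) and the fact that these subspaces are invariant under negation, $\Trop(\FF)\wedge(-\Trop(\FF'))$ is empty when $L_\FF + L_{\FF'}\subsetneq \RR^m$ and equals $\Span(\bbone_{[m]})$ otherwise. In the latter case, the binarity statement in \cref{lem:tropicalPairing} forces the multiplicity to be $1$. Hence $\langle\FF,\FF'\rangle = 1$ if and only if $L_\FF + L_{\FF'} = \RR^m$.

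Next I would identify $L_\FF + L_{\FF'}$ as the column span of the unsigned incidence matrix of $I_{F_\bullet,F_\bullet'}$. Assemble the indicators as columns of an $m \times 2(n-1)$ matrix $M$. Since the multiedges on each side of $I_{F_\bullet,F_\bullet'}$ partition $[m]$, every row of $M$, indexed by $i\in[m]$, has exactly one $1$ in each half of the columns, and by \cref{def:arboreal} these two $1$'s are precisely the two endpoints of the edge labelled $i$ in $I_{F_\bullet,F_\bullet'}$. So $M$ is the unsigned vertex-edge incidence matrix of the bipartite multigraph $I_{F_\bullet,F_\bullet'}$. Flipping signs on all columns on one side of the bipartition converts $M$ into a signed incidence matrix with orientation from one part to the other, whose rank over $\RR$ is the classical $|V(I_{F_\bullet,F_\bullet'})| - c(I_{F_\bullet,F_\bullet'})$, where $c$ denotes the number of connected components. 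Therefore $\dim(L_\FF + L_{\FF'}) = 2(n-1) - c(I_{F_\bullet,F_\bullet'})$, and this equals $m = 2n-3$ if and only if $I_{F_\bullet,F_\bullet'}$ is connected.

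Finally, an edge count closes the argument: $I_{F_\bullet,F_\bullet'}$ has $|V| = 2(n-1)$ and $|E| = m = 2n-3 = |V|-1$, so connectedness is equivalent to being a tree. Combining the two equivalences yields $\langle\FF,\FF'\rangle = 1$ if and only if $I_{F_\bullet,F_\bullet'}$ is a tree, i.e., $F_\bullet, F_\bullet'$ form an arboreal pair. The main obstacle I anticipate is confirming that the stable intersection multiplicity is exactly $1$ rather than some larger lattice index when $L_\FF + L_{\FF'} = \RR^m$; appealing to \cref{lem:tropicalPairing} directly bypasses any computation with the sublattices $N_\FF, N_{\FF'}\subseteq\ZZ^m$.
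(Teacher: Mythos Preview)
Your proposal is correct and takes a genuinely different route from the paper. Both arguments reduce to showing that the $m\times(m+1)$ matrix $M$ of indicator vectors $\bbone_{\ee_j},\bbone_{\ee_k'}$ has full row rank if and only if $I_{F_\bullet,F_\bullet'}$ is a tree. The paper treats the two directions separately: for the tree case it roots $I_{F_\bullet,F_\bullet'}$, orients edges toward the root, and reads off an explicit sequence of column reductions putting $M$ into echelon form; for the non-tree case it uses the edge count $|E|=|V|-1$ to force disconnectedness, then exhibits a block-diagonal structure of $M$ that prevents the columns from spanning $\RR^m$. Your approach collapses both directions into one step: after transposing and flipping signs on one side of the bipartition, $M$ becomes the oriented incidence matrix of $I_{F_\bullet,F_\bullet'}$, whose rank is the classical $|V|-c(I_{F_\bullet,F_\bullet'})$, so full rank is equivalent to connectedness, and the edge count finishes as before. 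This is cleaner and more conceptual, at the price of invoking the incidence-matrix rank formula as a black box rather than exhibiting the reduction by hand. Your appeal to \cref{lem:tropicalPairing}(4) to pin the multiplicity to $1$ is also tidier than the paper's bare assertion that transversality gives $\langle\FF,\FF'\rangle=1$.
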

\begin{proof}
  For the ``$\Rightarrow$'' direction, suppose that $F_{\bullet}$ and $F_\bullet'$ form an arboreal pair, which means that $I_{F,F'}$ is a tree.
  Note that the tropical linear spaces $\Trop(\FF)$ and $\Trop(\FF')$ each consist of a single cell $\Span(\bbone_{\ee}\mid \ee\in\EE(\FF))$ and $\Span(\bbone_{\ee'}\mid \ee'\in\EE(\FF'))$, respectively.
  Let $r$ be the common length of the chains $F_{\bullet}$ and $F_\bullet'$.  As $G$ is a Laman graph, we have that $2r=m+1$.
  Let $A\in\RR^{m\times (m+1)}$ be the matrix whose whose columns are the indicator vectors of $F_j$ and $F_j'$, $j=1,\dots,r$.  We now show that $A$ is of full rank, which means that  $\Trop(\FF)$ and $\Trop(\FF')$ intersect transversally, which in turn implies $\langle\FF,\FF'\rangle=1$.

  Fix a root on the tree $I_{F,F'}$ and orient each edge of $I_{F,F'}$ so that it points towards the root. The resulting directed tree then gives us a sequence of column reductions which result in a matrix in reduced column echelon form. This matrix is clearly of full rank, see \cref{ex:columnReduction}.

  For the ``$\Leftarrow$'' direction, suppose that $I_{F_\bullet,F_\bullet'}$ has a cycle.  As $|V(I_{F_\bullet,F_\bullet'})|=m+1=|E(I_{F_\bullet,F_\bullet'})|+1$, it follows that $I_{F_\bullet,F_\bullet'}$ has at least two connected components.  Decompose $I_{F_\bullet,F_\bullet'}=I_1\sqcup I_2$ where each $I_i$ has at least one connected component, which in turn yields a decomposition $[m]=M_1\sqcup M_2$ where $M_i\coloneqq \bigcup_{F\in V(I_i)} F\subseteq [m]$.  Let $A\in\RR^{m\times (m+1)}$ be the matrix as before, but with the rows rearranged so that ground set elements in $M_1$ come before ground set elements in $M_2$ and with the columns rearranged so that flats in $V(I_1)$ come before flats in $V(I_2)$.  The result is a block-diagonal matrix with blocks $A_i\in\RR^{|M_i|\times|E(I_i)|}$, see \cref{ex:columnReduction}.  Note that at most one of the $A_i$ may be square. We now distinguish between two cases:

  If neither $A_1$ nor $A_2$ is square, then one of them must have more rows than columns, say $A_2$, which implies that its columns do not span the entire $\RR^{|M_2|}$, which in turn implies that the columns of $A$ do not span the entire $\RR^m$.

  If one of the $A_i$ is square, say $A_2$, then fixing an orientation on $I_2$ yields a linear combination of the columns that equals zero.  Hence the square submatrix has a non-trivial kernel, which means its columns do do not span the entire $\RR^{|M_2|}$, which in turn implies that the columns of $A$ do not span the entire $\RR^m$.

  In both cases, we have that $\dim(\Span(\bbone_{\ee}\mid \ee\in\EE(\FF))+\Span(\bbone_{\ee}\mid \ee'\in\EE(\FF')))\neq m$, which shows that $\Trop(\FF)\wedge \Trop(\FF')=\emptyset$, which in turn implies that $\langle \FF,\FF'\rangle=0$.
\end{proof}

\begin{example}
  \label{ex:columnReduction}
  Consider the Laman graph $G$ and the three chain of flats from \cref{ex:arboreal}.
  Picking the orientation on $I_{F_1,F_2}$ in \cref{fig:Intersection} gives the following sequence of column reductions for the indicator matrix $A\in\RR^{7\times8}$ which (modulo reordering of the columns) yields a column reduced echelon form that is clearly of full rank:
  \begin{center}
    \begin{tikzpicture}
      \node (A0)
      {
        \begin{tikzpicture}[add paren/.style={left delimiter={(},right delimiter={)}}]
          \matrix (m) [matrix of math nodes, row sep=-0.35mm, column sep=-0.35mm]
          { 
            1  & 0  & 0 & 0 & 0 & 0 & 1 & 0 \\
            0  & 0  & 1 & 0 & 1 & 0 & 0 & 0 \\
            0  & 0  & 1 & 0 & 0 & 0 & 1 & 0 \\
            0  & 1  & 0 & 0 & 0 & 0 & 0 & 1 \\
            0  & 0  & 1 & 0 & 0 & 0 & 0 & 1 \\
            0  & 0  & 0 & 1 & 0 & 1 & 0 & 0 \\
            0  & 0  & 0 & 1 & 0 & 0 & 0 & 1 \\ };
          \node[fit=(m-1-1) (m-7-8), add paren, inner sep=0pt] (submatrix) {};
          \node[above,font=\tiny,yshift=3mm,blue] at (m-1-1) {$1$};
          \node[above,font=\tiny,yshift=3mm,RedViolet] at (m-1-2) {$4$};
          \node[above,font=\tiny,yshift=3mm,Orange] at (m-1-3) {$235$};
          \node[above,font=\tiny,yshift=3mm,Green] at (m-1-4) {$67$};
          \node[above,font=\tiny,yshift=3mm,blue] at (m-1-5) {$2$};
          \node[above,font=\tiny,yshift=3mm,RedViolet] at (m-1-6) {$6$};
          \node[above,font=\tiny,yshift=3mm,Orange] at (m-1-7) {$13$};
          \node[above,font=\tiny,yshift=3mm,Green] at (m-1-8) {$457$};
        \end{tikzpicture}
      };
      \node[anchor=west,xshift=10mm] (A1) at (A0.east)
      {
        \begin{tikzpicture}[add paren/.style={left delimiter={(},right delimiter={)}},anchor=center]
          \matrix (m) [matrix of math nodes, row sep=-0.35mm, column sep=-0.35mm]
          { 
            1  & 0  & 0 & 0 & 0 & 0 & 1 & 0 \\
            0  & 0  & 1 & 0 & 1 & 0 & 0 & 0 \\
            0  & 0  & 1 & 0 & 0 & 0 & 1 & 0 \\
            0  & 1  & 0 & 0 & 0 & 0 & 0 & 1 \\
            0  & 0  & 1 & 0 & 0 & 0 & 0 & 1 \\
            0  & 0  & 0 & 0 & 0 & 1 & 0 & 0 \\
            0  & 0  & 0 & 1 & 0 & 0 & 0 & 1 \\ };
          \node[fit=(m-1-1) (m-7-8), add paren, inner sep=0pt] (submatrix) {};
          \node[above,font=\tiny,yshift=3mm,blue] at (m-1-1) {$1$};
          \node[above,font=\tiny,yshift=3mm,RedViolet] at (m-1-2) {$4$};
          \node[above,font=\tiny,yshift=3mm,Orange] at (m-1-3) {$235$};
          \node[above,font=\tiny,yshift=3mm,Green] at (m-1-4) {$67$};
          \node[above,font=\tiny,yshift=3mm,blue] at (m-1-5) {$2$};
          \node[above,font=\tiny,yshift=3mm,RedViolet] at (m-1-6) {$6$};
          \node[above,font=\tiny,yshift=3mm,Orange] at (m-1-7) {$13$};
          \node[above,font=\tiny,yshift=3mm,Green] at (m-1-8) {$457$};
        \end{tikzpicture}
      };
      \node[anchor=west,xshift=10mm] (A2) at (A1.east)
      {
        \begin{tikzpicture}[add paren/.style={left delimiter={(},right delimiter={)}},anchor=center]
          \matrix (m) [matrix of math nodes, row sep=-0.35mm, column sep=-0.35mm]
          { 
            1  & 0  & 0 & 0 & 0 & 0 & 1 & 0 \\
            0  & 0  & 1 & 0 & 1 & 0 & 0 & 0 \\
            0  & 0  & 1 & 0 & 0 & 0 & 1 & 0 \\
            0  & 1  & 0 & 0 & 0 & 0 & 0 & 0 \\
            0  & 0  & 1 & 0 & 0 & 0 & 0 & 1 \\
            0  & 0  & 0 & 0 & 0 & 1 & 0 & 0 \\
            0  & 0  & 0 & 1 & 0 & 0 & 0 & 0 \\ };
          \node[fit=(m-1-1) (m-7-8), add paren, inner sep=0pt] (submatrix) {};
          \node[above,font=\tiny,yshift=3mm,blue] at (m-1-1) {$1$};
          \node[above,font=\tiny,yshift=3mm,RedViolet] at (m-1-2) {$4$};
          \node[above,font=\tiny,yshift=3mm,Orange] at (m-1-3) {$235$};
          \node[above,font=\tiny,yshift=3mm,Green] at (m-1-4) {$67$};
          \node[above,font=\tiny,yshift=3mm,blue] at (m-1-5) {$2$};
          \node[above,font=\tiny,yshift=3mm,RedViolet] at (m-1-6) {$6$};
          \node[above,font=\tiny,yshift=3mm,Orange] at (m-1-7) {$13$};
          \node[above,font=\tiny,yshift=3mm,Green] at (m-1-8) {$457$};
        \end{tikzpicture}
      };
      \node[anchor=north,yshift=-5mm] (A3) at (A2.south)
      {
        \begin{tikzpicture}[add paren/.style={left delimiter={(},right delimiter={)}}]
          \matrix (m) [matrix of math nodes, row sep=-0.35mm, column sep=-0.35mm]
          { 
            1  & 0  & 0 & 0 & 0 & 0 & 1 & 0 \\
            0  & 0  & 0 & 0 & 1 & 0 & 0 & 0 \\
            0  & 0  & 1 & 0 & 0 & 0 & 1 & 0 \\
            0  & 1  & 0 & 0 & 0 & 0 & 0 & 0 \\
            0  & 0  & 0 & 0 & 0 & 0 & 0 & 1 \\
            0  & 0  & 0 & 0 & 0 & 1 & 0 & 0 \\
            0  & 0  & 0 & 1 & 0 & 0 & 0 & 0 \\ };
          \node[fit=(m-1-1) (m-7-8), add paren, inner sep=0pt] (submatrix) {};
          \node[below,font=\tiny,yshift=-3mm,blue] at (m-7-1) {$1$};
          \node[below,font=\tiny,yshift=-3mm,RedViolet] at (m-7-2) {$4$};
          \node[below,font=\tiny,yshift=-3mm,Orange] at (m-7-3) {$235$};
          \node[below,font=\tiny,yshift=-3mm,Green] at (m-7-4) {$67$};
          \node[below,font=\tiny,yshift=-3mm,blue] at (m-7-5) {$2$};
          \node[below,font=\tiny,yshift=-3mm,RedViolet] at (m-7-6) {$6$};
          \node[below,font=\tiny,yshift=-3mm,Orange] at (m-7-7) {$13$};
          \node[below,font=\tiny,yshift=-3mm,Green] at (m-7-8) {$457$};
        \end{tikzpicture}
      };
      \node[anchor=north,yshift=-5mm] (A4) at (A1.south)
      {
        \begin{tikzpicture}[add paren/.style={left delimiter={(},right delimiter={)}}]
          \matrix (m) [matrix of math nodes, row sep=-0.35mm, column sep=-0.35mm]
          { 
            1  & 0  & 0 & 0 & 0 & 0 & 1 & 0 \\
            0  & 0  & 0 & 0 & 1 & 0 & 0 & 0 \\
            0  & 0  & 1 & 0 & 0 & 0 & 0 & 0 \\
            0  & 1  & 0 & 0 & 0 & 0 & 0 & 0 \\
            0  & 0  & 0 & 0 & 0 & 0 & 0 & 1 \\
            0  & 0  & 0 & 0 & 0 & 1 & 0 & 0 \\
            0  & 0  & 0 & 1 & 0 & 0 & 0 & 0 \\ };
          \node[fit=(m-1-1) (m-7-8), add paren, inner sep=0pt] (submatrix) {};
          \node[below,font=\tiny,yshift=-3mm,blue] at (m-7-1) {$1$};
          \node[below,font=\tiny,yshift=-3mm,RedViolet] at (m-7-2) {$4$};
          \node[below,font=\tiny,yshift=-3mm,Orange] at (m-7-3) {$235$};
          \node[below,font=\tiny,yshift=-3mm,Green] at (m-7-4) {$67$};
          \node[below,font=\tiny,yshift=-3mm,blue] at (m-7-5) {$2$};
          \node[below,font=\tiny,yshift=-3mm,RedViolet] at (m-7-6) {$6$};
          \node[below,font=\tiny,yshift=-3mm,Orange] at (m-7-7) {$13$};
          \node[below,font=\tiny,yshift=-3mm,Green] at (m-7-8) {$457$};
        \end{tikzpicture}
      };
      \node[anchor=north,yshift=-5mm] (A5) at (A0.south)
      {
        \begin{tikzpicture}[add paren/.style={left delimiter={(},right delimiter={)}}]
          \matrix (m) [matrix of math nodes, row sep=-0.35mm, column sep=-0.35mm]
          { 
            0  & 0  & 0 & 0 & 0 & 0 & 1 & 0 \\
            0  & 0  & 0 & 0 & 1 & 0 & 0 & 0 \\
            0  & 0  & 1 & 0 & 0 & 0 & 0 & 0 \\
            0  & 1  & 0 & 0 & 0 & 0 & 0 & 0 \\
            0  & 0  & 0 & 0 & 0 & 0 & 0 & 1 \\
            0  & 0  & 0 & 0 & 0 & 1 & 0 & 0 \\
            0  & 0  & 0 & 1 & 0 & 0 & 0 & 0 \\ };
          \node[fit=(m-1-1) (m-7-8), add paren, inner sep=0pt] (submatrix) {};
          \node[below,font=\tiny,yshift=-3mm,blue] at (m-7-1) {$1$};
          \node[below,font=\tiny,yshift=-3mm,RedViolet] at (m-7-2) {$4$};
          \node[below,font=\tiny,yshift=-3mm,Orange] at (m-7-3) {$235$};
          \node[below,font=\tiny,yshift=-3mm,Green] at (m-7-4) {$67$};
          \node[below,font=\tiny,yshift=-3mm,blue] at (m-7-5) {$2$};
          \node[below,font=\tiny,yshift=-3mm,RedViolet] at (m-7-6) {$6$};
          \node[below,font=\tiny,yshift=-3mm,Orange] at (m-7-7) {$13$};
          \node[below,font=\tiny,yshift=-3mm,Green] at (m-7-8) {$457$};
        \end{tikzpicture}
      };
      \draw [->] (A0) -- node[above,font=\scriptsize] {$\textcolor{RedViolet}6\!\to\!\textcolor{Green}{67}$} (A1);
      \draw [->] (A1) -- node[above,font=\scriptsize,yshift=4mm] {$\textcolor{Green}{67}\!\to\!\textcolor{Green}{457}$} node[above,font=\scriptsize] {$\textcolor{RedViolet}{4}\!\to\!\textcolor{Green}{457}$} (A2);
      \draw [->] (A2) -- node[right,font=\scriptsize,yshift=2mm] {$\textcolor{blue}{2}\!\to\!\textcolor{Orange}{235}$}  node[right,font=\scriptsize,yshift=-2mm] {$\textcolor{Green}{457}\!\to\!\textcolor{Orange}{235}$} (A3);
      \draw [->] (A3) -- node[below,font=\scriptsize] {$\textcolor{Orange}{13}\!\leftarrow\!\textcolor{Orange}{235}$} (A4);
      \draw [->] (A4) -- node[below,font=\scriptsize] {$\textcolor{blue}{1}\!\leftarrow\!\textcolor{Orange}{13}$} (A5);
    \end{tikzpicture}
  \end{center}
  
  In contrast, picking the orientation on the connected component of $I_{F_2,F_3}$ from \cref{fig:Intersection} gives the following sequence of column reductions for a block of the indicator matrix which (modulo reordering of the columns) yields a column reduced echolon form that is clearly not of full rank:
  \begin{center}
    \begin{tikzpicture}
      \node (A0)
      {
        \begin{tikzpicture}[add paren/.style={left delimiter={(},right delimiter={)}}]
          \matrix (m) [matrix of math nodes, row sep=-0.35mm, column sep=-0.35mm]
          { 0 & 1 & 0 & 1 \\
            0 & 1 & 0 & 1 \\
            1 & 0 & 1 & 0 \\
            0 & 1 & 1 & 0 \\ };
          \node[fit=(m-1-1) (m-4-4), add paren, inner sep=0pt] (submatrix) {};
          \node[above,font=\tiny,yshift=3mm,RedViolet] at (m-1-1) {$6$};
          \node[above,font=\tiny,yshift=3mm,Green] at (m-1-2) {$457$};
          \node[above,font=\tiny,yshift=3mm,Green] at (m-1-3) {$67$};
          \node[above,font=\tiny,yshift=3mm,RedViolet] at (m-1-4) {$45$};
        \end{tikzpicture}
      };
      \node[anchor=west,xshift=10mm] (A1) at (A0.east)
      {
        \begin{tikzpicture}[add paren/.style={left delimiter={(},right delimiter={)}},anchor=center]
          \matrix (m) [matrix of math nodes, row sep=-0.35mm, column sep=-0.35mm]
          { 0 & 0 & 0 & 1 \\
            0 & 0 & 0 & 1 \\
            1 & 0 & 1 & 0 \\
            0 & 1 & 1 & 0 \\ };
          \node[fit=(m-1-1) (m-4-4), add paren, inner sep=0pt] (submatrix) {};
          \node[above,font=\tiny,yshift=3mm,RedViolet] at (m-1-1) {$6$};
          \node[above,font=\tiny,yshift=3mm,Green] at (m-1-2) {$457$};
          \node[above,font=\tiny,yshift=3mm,Green] at (m-1-3) {$67$};
          \node[above,font=\tiny,yshift=3mm,RedViolet] at (m-1-4) {$45$};
        \end{tikzpicture}
      };
      \node[anchor=west,xshift=10mm] (A2) at (A1.east)
      {
        \begin{tikzpicture}[add paren/.style={left delimiter={(},right delimiter={)}},anchor=center]
          \matrix (m) [matrix of math nodes, row sep=-0.35mm, column sep=-0.35mm]
          { 0 & 0 & 0 & 1 \\
            0 & 0 & 0 & 1 \\
            1 & 0 & 1 & 0 \\
            0 & 1 & 0 & 0 \\ };
          \node[fit=(m-1-1) (m-4-4), add paren, inner sep=0pt] (submatrix) {};
          \node[above,font=\tiny,yshift=3mm,RedViolet] at (m-1-1) {$6$};
          \node[above,font=\tiny,yshift=3mm,Green] at (m-1-2) {$457$};
          \node[above,font=\tiny,yshift=3mm,Green] at (m-1-3) {$67$};
          \node[above,font=\tiny,yshift=3mm,RedViolet] at (m-1-4) {$45$};
        \end{tikzpicture}
      };
      \node[anchor=west,xshift=10mm] (A3) at (A2.east)
      {
        \begin{tikzpicture}[add paren/.style={left delimiter={(},right delimiter={)}},anchor=center]
          \matrix (m) [matrix of math nodes, row sep=-0.35mm, column sep=-0.35mm]
          { 0 & 0 & 0 & 1 \\
            0 & 0 & 0 & 1 \\
            0 & 0 & 1 & 0 \\
            0 & 1 & 0 & 0 \\ };
          \node[fit=(m-1-1) (m-4-4), add paren, inner sep=0pt] (submatrix) {};
          \node[above,font=\tiny,yshift=3mm,RedViolet] at (m-1-1) {$6$};
          \node[above,font=\tiny,yshift=3mm,Green] at (m-1-2) {$457$};
          \node[above,font=\tiny,yshift=3mm,Green] at (m-1-3) {$67$};
          \node[above,font=\tiny,yshift=3mm,RedViolet] at (m-1-4) {$45$};
        \end{tikzpicture}
      };
      \draw [->] (A0) -- node[above,font=\scriptsize] {$\textcolor{RedViolet}{45}\!\to\!\textcolor{Green}{457}$} (A1);
      \draw [->] (A1) -- node[above,font=\scriptsize] {$\textcolor{Green}{457}\!\to\!\textcolor{Green}{67}$} (A2);
      \draw [->] (A2) -- node[above,font=\scriptsize] {$\textcolor{Green}{67}\!\to\!\textcolor{RedViolet}{6}$} (A3);
    \end{tikzpicture}
  \end{center}
\end{example}

\section{Software}\label{sec:software}
We have created a julia package \texttt{TropicalGalaxies.jl} in order to facilitate our experiments.  It relies on the computer algebra system \texttt{OSCAR} \cite{OSCAR,OSCAR-book}, and it is publicly available under the following url where installation and usage instructions can be found:
\begin{center}
  \url{https://github.com/YueRen/TropicalGalaxies.jl}.
\end{center}

It features functions for constructing and excising Laman graphs, see \cref{fig:softwareLaman} for the visualisations.  The Laman graphs are taken from the database in \cite{CapcoGalletGraseggerKoutschanLubbesSchicho2018a}.

\begin{jllisting}
G = laman_graph(4,1)
HH = excise(G, [2, 3])
FF = excise(HH, [1, 5])
visualize(G) # see Figure 9
visualize(HH)
visualize(FF)
\end{jllisting}

\begin{figure}[t]
  \centering
  \begin{tikzpicture}
    \node (left)
    {
      \includegraphics[width=0.25\linewidth]{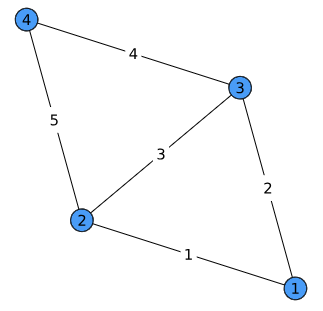}
    };
    \node[anchor=west,xshift=10mm] (middle) at (left.east)
    {
      \includegraphics[width=0.25\linewidth]{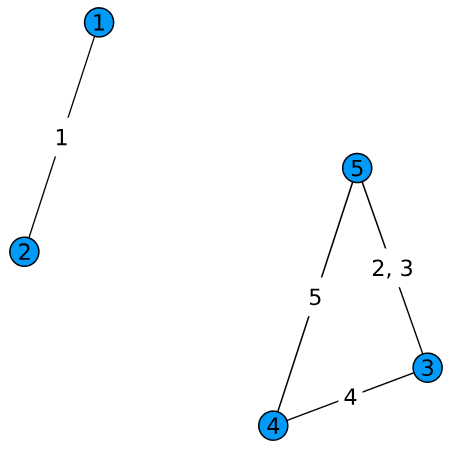}
    };
    \node[anchor=west,xshift=10mm] (right) at (middle.east)
    {
      \includegraphics[width=0.25\linewidth]{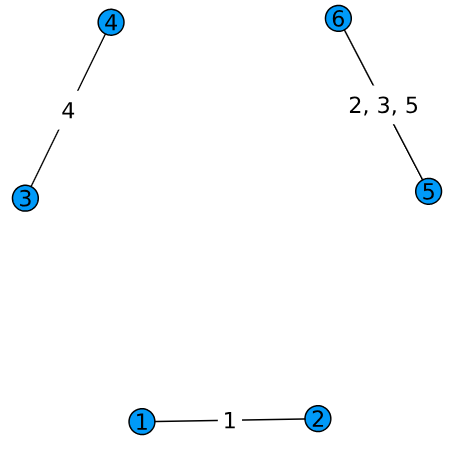}
    };
  \end{tikzpicture}
  \caption{A Laman graph, its excised triangle and its full excision\pointOrNoPoint}
  \label{fig:softwareLaman}
\end{figure}

It also allows for the construction of tropical galaxies, see \cref{fig:softwareGalaxy} for the visualization:

\begin{jllisting}
G = laman_graph(4,1)
Gamma = tropical_galaxy(G)
visualize_excision_graph(Gamma) # see Figure 10
\end{jllisting}

\begin{figure}[t]
  \centering
  \includegraphics[scale=0.8]{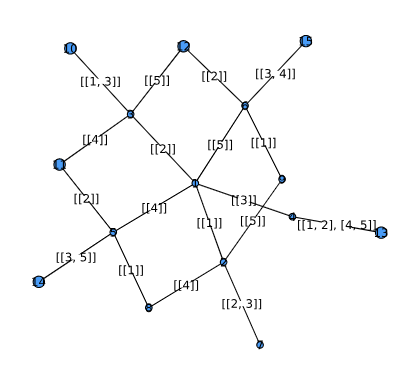}
  \caption{The tropical galaxy of the complete graph on $4$ vertices without an edge\pointOrNoPoint}
  \label{fig:softwareGalaxy}
\end{figure}

Up-to-date documentation can be found under the url above.


\renewcommand*{\bibfont}{\small}
\printbibliography
\end{document}